\DeclareFontFamily{OT1}{pzc}{}
\DeclareFontShape{OT1}{pzc}{m}{it}{<-> s * [1.10] pzcmi7t}{}
\DeclareMathAlphabet{\mathpzc}{OT1}{pzc}{m}{it}
\theoremstyle{plain}
\newtheorem{theorem}{Theorem}[section]
\newtheorem*{theorem*}{The Contraction Principle}
\newtheorem{corollary}[theorem]{Corollary}
\newtheorem{proposition}[theorem]{Proposition}
\newtheorem{lemma}[theorem]{Lemma}
\theoremstyle{definition}
\newtheorem*{definition*}{Definition}
\newtheorem{definition}[theorem]{Definition}
\newtheorem{example}[theorem]{Example}
\theoremstyle{remark}
\newtheorem{remark}[theorem]{Remark}
\newtheorem*{notation*}{Notation}
\newtheorem{convention}[theorem]{Convention}
\newcommand{\ignore}[1]{}
\def\s{\sigma}
\def\I{\mathcal{I}}
\def\R{\mathcal{R}}
\def\s{\mathfrak{s}}
\newcommand{\Frm}{\mathbf{Frm}}
\newcommand{\DLat}{\mathbf{DLat}}
\newcommand{\Pries}{\mathbf{Pries}}
\newcommand{\Heyt}{\mathbf{H}}
\newcommand{\proH}{\mathbf{proH}}
\newcommand{\CproH}{\mathbf{CproH}}
\newcommand{\pH}{\mathpzc{p}\mathcal{H}}
\newcommand{\JID}{\mathbf{JID}}
\newcommand{\alg}[1]{\mathord{{\mathbf{#1}}}}
\newcommand{\prefixk}[1]{\mathord{\kappa{#1}}}
\newcommand{\prefixC}[1]{\mathord{\mathbf{C}{#1}}}
\newcommand{\DLat}{\alg{DLat}}
\newcommand{\Heyt}{\alg{H}}
\newcommand{\Frm}{\alg{Frm}}
\newcommand{\proH}{\alg{proH}}
\newcommand{\JID}{\alg{JID}}
\newcommand{\kJD}{\prefixk{\alg{JD}}}
\newcommand{\kFrm}{\prefixk\Frm}
\newcommand{\CHeyt}{\prefixC\Heyt}
\newcommand{\CDLat}{\prefixC\DLat}
\newcommand{\kHeyt}{\prefixk\Heyt}
\newcommand{\CkFrm}{\prefixC{\prefixk\Frm}}
\newcommand{\kproH}{\prefixk\proH}
\newcommand{\CproH}{\prefixC{\proH}}
\newcommand{\prokH}{\alg{pro}{\kHeyt}}
\newcommand{\Pries}{\alg{Pries}}
\newcommand{\pH}{\mathpzc{p}\mathcal{H}}
\newcommand{\BL}{\mathcal{BL}}
\newcommand{\M}{\mathcal{DM}}
\newcommand{\st}{\mid}
\title{Degrees of join-distributivity via Bruns-Lakser towers}
\author{G.~Bezhanishvili}
\address{New Mexico State University}
\email{guram@nmsu.edu}
\author{F.~Dashiell Jr}
\address{CECAT, Chapman University}
\email{dashiell@math.ucla.edu }
\author{M.A.~Moshier}
\address{CECAT, Chapman University}
\email{moshier@chapman.edu}
\author{J.~Walters-Wayland}
\address{CECAT, Chapman University}
\email{walterswayland@chapman.edu}
\date{}
\begin{document}

\subjclass[2020]{18F70; 06D22; 06D20; 06D05; 06B23; 06A12; 06E15}
\keywords{Semilattice; distributive lattice; Heyting lattice; frame; $\kappa$-frame; Dedekind-MacNeille completion; Bruns-Lakser completion; Priestley duality}

\begin{abstract}
    We utilize the Bruns-Lakser completion to introduce Bruns-Lakser towers of a meet-semilattice. This machinery enables us to develop various hierarchies inside the class of bounded distributive lattices, which measure $\kappa$-degrees of distributivity of bounded distributive lattices and their Dedekind-MacNeille completions. We also use Priestley duality to obtain a dual characterization of the resulting hierarchies. Among other things, this yields a natural generalization of Esakia's representation of Heyting lattices to proHeyting lattices.  
\end{abstract}

\maketitle

\tableofcontents

\section{Introduction}

In the theory of completions, the Dedekind-MacNeille completion occupies a special place. The idea was first described by Dedekind in 1872 (for an English translation see \cite{dedekind_essays_1901}) when he constructed the real numbers as special subsets of the rationals. This was further generalized by MacNeille \cite{macneille_partially_1937} who showed that every poset possesses a completion, which we now call the Dedekind-MacNeille completion.
Shortly after, Funayama \cite{funayama_completion_1944} showed that the Dedekind-MacNeille completion of a distributive lattice may not be distributive.
By contrast, the Dedekind-MacNeille completion of a Heyting lattice is not only distributive, but even satisfies the {\em join infinite distributive law} (JID)
\begin{equation*}
    a\wedge \bigvee S = \bigvee (a \wedge S), \label{JID}
\end{equation*}
and hence is a frame (see, e.g., \cite[p.~111]{johnstone_stone_1982}).

This has motivated the search for conditions under which the Dedekind-MacNeille completion possesses various degrees of distributivity (see, e.g., \cite{cornish_kernels_1978, erne_distributive_1993,ball_dedekind-macneille_2016}).

In \cite{mandelker_relative_1970}, Mandelker introduced an important technique for characterizing distributivity in lattices. Varlet \cite{varlet_relative_1973} extended this to meet-semilattices.
The main ideas of these papers can be summarized beginning with the well-known observation that the Heyting arrow law 
\[
a\wedge b\leq c \mbox{ if and only if } a\leq b\to c
\]
forces a lattice to be distributive. But clearly, the existence of this arrow is not necessary for distributivity.
Mandelker showed that the necessary condition comes down to the shape of the downsets $\langle b,c\rangle = \{x \st x\wedge b\leq c\}$, which he named \emph{relative annihilators}. 
In a Heyting lattice, these are principal downsets.
He showed that a lattice is distributive if and only if each relative annihilator is an ideal.
In short, this explains why the Heyting arrow law forces distributivity, since principal downsets are ideals.
Varlet refined this by showing that a meet-semilattice is distributive if and only if each relative annihilator is up-directed (in meet-semilattices, this is an appropriate replacement for being an ideal). This demonstrates that relative annihilators provide a sort of gauge of the distributivity of a lattice or semilattice.

Re-enforcing this, Ball et al.~\cite[Defn.~3.1]{ball_dedekind-macneille_2016} characterize those meet-semilattices $A$ for which the Dedekind-MacNeille completion $\M A$ is a frame (for a similar result for distributive lattices, see \cite[Thm.~3.4]{cornish_kernels_1978}).
From the foregoing discussion, we know that it is too much to ask for all relative annihilators of $A$ to be principal as that means $A$ is already a Heyting lattice. But as the Funayama example shows, it is also not enough to ask them to be ideals (up-directed downsets). 
The necessary and sufficient condition of \cite{ball_dedekind-macneille_2016} asks that all relative annihilators be \emph{normal}. 
That is, each is an intersection of principal downsets.
This leads to a natural definition of proHeyting semilattices as those meet-semilattices $A$ for which all relative annihilators are normal, so that $\M A$ 
is a frame if and only if $A$ is proHeyting.

The notion of a proHeyting semilattice properly generalizes that of a Heyting lattice \cite[Ex.~3.1]{ball_dedekind-macneille_2016}, and implicitly that of an implicative semilattice \cite{nemitz_implicative_1965,varlet_relative_1973}. Another natural generalization of Heyting lattices is obtained by considering those lattices that satisfy JID for all existing joins. Indeed, for complete lattices, the three notions of Heyting, ProHeyting, and JID are equivalent. In general, they split into the following proper inclusions: 
\[\Heyt \subset \proH \subset \JID.\] 
We note that the last class can conveniently be described as follows: We say that an existing join $\bigvee S$ in a lattice $A$ is {\em distributive}\footnote{Such joins are also known as {\em exact} \cite[p.~531]{ball_notes_2014}.} if $a\wedge \bigvee S = \bigvee (a \wedge S)$ for each $a\in A$. 
Then $A$ is JID provided each existing join in $A$ is distributive. 
The following diagram summarizes the classes of bounded distributive 
lattices mentioned above, where each class is described in the table below the diagram:
 \[\begin{tikzcd}[sep=small]
	\Heyt && \proH && {\JID} && {\DLat}  \\
\CHeyt&& \CproH && {\Frm} && {\CDLat}  \\
	\arrow[hook, from=2-1, to=1-1]
 \arrow[hook, from=2-3, to=1-3]
 \arrow[hook, from=2-5, to=1-5]
 \arrow[hook, from=2-7, to=1-7]
\arrow[Rightarrow, no head, from=2-1, to=2-3]
\arrow[Rightarrow, no head, from=2-3, to=2-5]
\arrow[hook, from=2-5, to=2-7]
\arrow[hook, from=1-1, to=1-3]
\arrow[hook, from=1-3, to=1-5]
\arrow[hook, from=1-5, to=1-7]
\end{tikzcd}
\]

\vspace{-10mm}

\begin{center}
\begin{figure}[!ht]

\begin{tabular}{|l|l|}
\hline
$\DLat$ &Bounded distributive lattices\\
\hline
$\JID$ & Bounded distributive lattices satisfying JID\\
\hline
$\proH$  & Bounded distributive lattices that are proHeyting \\
\hline
$\Heyt$  & Heyting lattices\\
\hline
$\CDLat$&Complete distributive lattices\\
\hline
$\Frm$  & Frames\\
\hline
$\CproH$  & Complete proHeyting lattices\\
\hline
$\CHeyt$  & Complete Heyting lattices\\
\hline
\end{tabular} \label{fig:basic-inclusions}
\end{figure}
\end{center} 

It is well known (see, e.g., \cite[II.1.4]{picado_frames_2012}) that $\CHeyt = \Frm$. Moreover, each complete ProHeyting lattice $A$ is a frame because if $A$ is complete, then $\M A\cong A$. But $\M A$ is a frame because $A$ is ProHeyting. Thus, $A$ is a frame, and hence $\CHeyt = \CproH = \Frm$. 

The other inclusions in the above diagram are all strict.
 The aim of this paper is to provide a more fine-grained analysis of the diagram by introducing proper classes indexed by regular cardinals that in some sense gauge the interplay between various degrees of completeness and distributivity.

One of our key tools, in addition to the relative annihilators, 
is the completion of a meet-semilattice studied by Bruns and Lakser \cite{bruns_injective_1970}, which is called the {\em Bruns-Lakser completion} in \cite{bezhanishvili_dedekind-macneille_2024}. 
The Bruns-Lakser completion $\BL A$ of a semilattice $A$ is always a frame \cite{bruns_injective_1970}. 
For each regular cardinal $\kappa$, we define the $\kappa$-subframe $\BL_\kappa A$ of $\BL A$ generated by $A$, which will be our measurement of distributivity of $\kappa$-joins. We thus arrive at the following {\em Bruns-Lakser tower} of $A$:
\[
A \subseteq \BL_{\omega_0}A \subseteq \BL_{\omega_1}A \subseteq \cdots \subseteq \BL_\kappa A \subseteq \cdots \subseteq \BL A,
\]
where
$\omega_0$ denotes the first infinite cardinal and $\omega_1$ the first uncountable cardinal, both of which are regular. We show that $\BL_\kappa A = A$ if and only if $A$ is a $\kappa$-frame. Therefore, the collapse of the BL-tower at the $\kappa$-level measures the $\kappa$-frameness of $A$.

We also show that, up to isomorphism, $\BL(\M A) = \BL A$, which allows us to generate the following Bruns-Lakser tower of $\M A$ within $\BL A$:
\[
\M A \subseteq \BL_{\omega_0}(\M A) \subseteq \BL_{\omega_1}(\M A) \subseteq \cdots \subseteq \BL_\kappa(\M A) \subseteq \cdots \subseteq \BL A.
\] 
We prove that $\M A$ is $\kappa$-distributive if and only if $\BL_\kappa(\M A) = \M A$, thus yielding a degree of measurement of $\kappa$-distributivity of $\M A$. In particular, $\M A$ is a distributive lattice if and only if $\BL_{\omega_0}(\M A) = \M A$, and it is a frame if and only if $\BL_\kappa(\M A) = \M A$ for all $\kappa$. 
 This yields a generalization of $\proH$ to $\kproH$:  $A\in\kproH$ if and only if $\M A$ is a $\kappa$-frame.

Another important lattice situated between $A$ and $\BL A$ is the {\em proHeyting extension} $\pH A$ of $A$, which is the bounded sublattice of $\BL A$ generated by (the image of) $A$ in $\BL A$. This extension was introduced in \cite{bezhanishvili_dedekind-macneille_2024} and serves as the {\em finitary} version of $\BL A$. It was defined for bounded distributive lattices, but obviously extends to meet-semilattices. We prove that, up to isomorphism, $\BL(\pH A) = \BL A$, allowing us to develop the Bruns-Lakser tower of $\pH A$ within $\BL A$:
\[
\pH A \subseteq \BL_{\omega_0}\pH A \subseteq \BL_{\omega_1}\pH A \subseteq \cdots \subseteq \BL_\kappa\pH A \subseteq \cdots \subseteq \BL A.
\]
 
 We utilize this to define further hierarchies, this time from $\Heyt$ to $\DLat$ and $\proH$, respectively, by generalizing $\Heyt$ to $\kHeyt$ and $\prokH$ (see \cref{def: kappa H}).

For the sake of completeness (no pun intended) we also consider the hierarchies between $\CHeyt$ and $\Heyt$ given by $\kappa$-complete Heyting lattices ($\kappa\CHeyt$, see \cref{def: kappaCH}), between $\Frm$ and $\CDLat$ 
given by $\kappa$-frames that are complete lattices ($\CkFrm$, see \cref{def: CkFrm}), and between $\JID$ and $\DLat$ given by $\kappa$JD-lattices ($\kJD$, see \cref{def: kappa JD 2}). 
The various hierarchies developed in this paper are summarized in \cref{fig:Summary of hierarchies}.

We conclude the paper by utilizing Priestley duality  for distributive lattices to obtain dual characterizations of our various hierarchies. This we do by obtaining a dual characterization of distributive joins in $\M A$ and existing distributive joins in $A$. 
In particular, this yields a characterization of proHeyting lattices 
that generalizes Esakia's representation of Heyting lattices
\cite{esakia_topological_1974}.

\section{Dedekind-MacNeille and Bruns-Lakser completions} \label{sec: prelims}

In this section, we recall the Dedekind-MacNeille and Bruns-Lakser completions---two completions of primary interest to us. We also recall the proHeyting condition and the resulting notion of the proHeyting extension.
Let $A$ be a poset. 
For $S\subseteq A$, we denote by $S^u$ the set of upper bounds and by $S^\ell$ the set of lower bounds of $S$ in $A$. The next definition is well known (see, e.g., \cite[p.~63]{gratzer_lattice_2011}).

\begin{definition}\ \label{defn: normalideal}
\begin{enumerate}
\item A downset $N$ of $A$ is a {\em normal ideal} if $N = N^{u\ell}$ (equivalently, $N$ is an intersection of principal ideals).
      \item The {\em Dedekind-MacNeille completion}, or simply the {\em DM-completion}, is the complete lattice of all normal ideals of $A$, denoted \emph{$\M A$.}
     \end{enumerate}
\end{definition}

As mentioned in the introduction, the DM-completion of a distributive lattice may not be distributive. By contrast, the Bruns-Lakser completion of every meet-semilattice is a frame (see \cite{bruns_injective_1970} and \cite{horn_category_1971}). We next recall this important construction. By a {\em meet-semilattice} we mean a poset $A$ in which all finite meets exist. In particular, $A$ has $1$, but $A$ may not have $0$.

\begin{definition}\label{defn: BA}
Let $A$ be a meet-semilattice.
\begin{enumerate}
      \item Let $S \subseteq A$ be such that $\bigvee S$ exists in $A$. We call $S$  \emph{admissible} and $\bigvee S$ a {\em distributive join} provided  
    $a\wedge\bigvee S = \bigvee\{ a\wedge s : s \in S\}$ for each $a\in A$.\footnote{As mentioned in the introduction, such joins are called exact in \cite[p.~531]{ball_notes_2014}.}
   \item A downset $E$ of $A$ is a {\em D-ideal} if $\bigvee S\in E$ for each admissible $S \subseteq E$. 
 \item The {\em Bruns-Lakser completion}, or simply the {\em BL-completion}, is the collection of all D-ideals of $A$, denoted $\BL A$.
     \end{enumerate}
\end{definition}
 
\begin{remark}
   The frame $\BL A$ is the injective hull of $A$ in the category of meet-semilattices \cite{bruns_injective_1970}, whereas $\M A$ is the injective hull of $A$ in the category of posets \cite{banaschewski_categorical_1967}. Both constructions go back to MacNeille; for historical remarks see \cite[Rem.~3.3]{bezhanishvili_semilattice_2024}. An alternative construction of $\BL A$ is obtained from the observation that D-ideals are precisely intersections of relative annihilators (see  \cite[Cor.~1.7]{cornish_kernels_1978} or  \cite[Lem.~3.12]{bezhanishvili_semilattice_2024}).
\end{remark}

Since every normal ideal is a D-ideal, we have $\M A \subseteq \BL A$. In fact, $\M A$ is a sub-meet-semilattice of $\BL A$, but it is not always a sublattice of $\BL A$. The 
inclusion $\BL A \subseteq \M A$ requires the proHeyting condition, which we define next.

\begin{definition}\ \label{defn: ProH}
Let $A$ be a meet-semilattice.
\begin{enumerate}
      \item \cite{mandelker_relative_1970}
 A {\em relative annihilator} of $A$ is a downset of the form 
\[
\langle a,b \rangle = \{ x\in A : a\wedge x\le b\}.
\]
\item Let $\R A$ denote the poset of relative annihilators of $A$.
\item \cite[Def.~3.1]{ball_dedekind-macneille_2016} 
     $A$  is {\em proHeyting} if $\R A \subseteq \M A$; that is, $\langle a,b \rangle$ is a normal ideal for each $a,b\in A$.\footnote{In \cite{ball_dedekind-macneille_2016}, the authors write $b{\downarrow}a$ instead of $\langle a,b \rangle$. They also assume that $b<a$, however this assumption is unnecessary since if $a\le b$ then $\langle a,b \rangle = A = {\downarrow}1$ is principal; and otherwise $a\wedge b < a$ and $\langle a,b \rangle = \langle a,a\wedge b \rangle$.} 
     \end{enumerate}
\end{definition}

Since ${\downarrow}a=\langle 1,a\rangle$ for each $a\in A$, each principal ideal of $A$ is a relative annihilator, and hence the map $a\mapsto{\downarrow}a$ embeds $A$ into $\R A$. Moreover, since ${\downarrow}a\cap{\downarrow}b={\downarrow}(a\wedge b)$ for each $a,b\in A$, this map embeds $A$, as a meet-semilattice, join-densely into $\BL A$.

\begin{convention} \label{conv}
    We  identify
     $A$ with the principal downsets $\{ {\downarrow}a : a\in A\}$ and view 
    $A$ as a subposet of $\R A$ and a sub-meet-semilattice of $\BL A$. 
\end{convention}

\begin{proposition} \label{thm: MA frame}
   \cite[Thm.~6.3]{ball_dedekind-macneille_2016} \label[proposition]{thm: MA frame 1}
    For a meet-semilattice $A$, the following are equivalent:
    \begin{enumerate}[label=\upshape(\arabic*), ref = \thetheorem(\arabic*)]
        \item $A$ is proHeyting.
        \item $\M A$ is a frame.
        \item $\M A = \BL A$.
        \item $\M A$ is a Heyting lattice.
    \end{enumerate} 
\end{proposition}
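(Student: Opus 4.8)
The plan is to prove the cycle $(1)\Rightarrow(3)\Rightarrow(2)\Rightarrow(1)$ together with the equivalence $(2)\Leftrightarrow(4)$, leveraging the already-noted facts that $\M A$ is always a sub-meet-semilattice of $\BL A$ with $\M A\subseteq\BL A$, that $\BL A$ is always a frame, and that relative annihilators generate $\BL A$ (indeed $\BL A$ consists exactly of intersections of relative annihilators, by the remark after \cref{defn: BA}).

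First I would do $(1)\Rightarrow(3)$. Assume $A$ is proHeyting, so every relative annihilator $\langle a,b\rangle$ is a normal ideal, i.e.\ lies in $\M A$. Since $\M A$ is closed under arbitrary intersections (it is a complete lattice whose meets are intersections, being the poset of normal ideals) and since every D-ideal is an intersection of relative annihilators, every D-ideal is an intersection of members of $\M A$, hence is itself in $\M A$. Thus $\BL A\subseteq\M A$, and combined with the always-available inclusion $\M A\subseteq\BL A$ we get $\M A=\BL A$. For $(3)\Rightarrow(2)$ there is nothing to do: $\BL A$ is a frame by Bruns--Lakser \cite{bruns_injective_1970}, so if $\M A=\BL A$ then $\M A$ is a frame. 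For $(2)\Rightarrow(1)$, suppose $\M A$ is a frame. Fix $a,b\in A$; I must show $\langle a,b\rangle\in\M A$, i.e.\ $\langle a,b\rangle=\langle a,b\rangle^{u\ell}$. One inclusion is automatic for any downset, so the point is $\langle a,b\rangle^{u\ell}\subseteq\langle a,b\rangle$. The natural approach: $\langle a,b\rangle^{u\ell}$ is the smallest normal ideal containing $\langle a,b\rangle$, so it equals the join in $\M A$ of the principal ideals ${\downarrow}x$ over $x\in\langle a,b\rangle$; using that joins in the frame $\M A$ distribute over the meet with ${\downarrow}a$, compute ${\downarrow}a\cap\langle a,b\rangle^{u\ell}=\bigvee_{x\in\langle a,b\rangle}({\downarrow}a\cap{\downarrow}x)=\bigvee_{x}{\downarrow}(a\wedge x)\subseteq{\downarrow}b$, the last step because $a\wedge x\le b$ for $x\in\langle a,b\rangle$ and $\bigvee$ in $\M A$ of principal ideals below ${\downarrow}b$ stays below ${\downarrow}b$. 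Hence any $y\in\langle a,b\rangle^{u\ell}$ satisfies $a\wedge y\le b$, i.e.\ $y\in\langle a,b\rangle$, as desired.

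Finally, $(2)\Leftrightarrow(4)$. The implication $(4)\Rightarrow(2)$ is the classical fact that every complete Heyting lattice is a frame (see \cite[II.1.4]{picado_frames_2012}), noting $\M A$ is complete. For $(2)\Rightarrow(4)$: a complete lattice satisfying JID is a Heyting lattice, defining $p\to q=\bigvee\{x : x\wedge p\le q\}$; the frame law guarantees this supremum behaves as the relative pseudocomplement. So $\M A$ being a frame makes it a Heyting lattice.

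I expect the main obstacle to be $(2)\Rightarrow(1)$, specifically the identification of $\langle a,b\rangle^{u\ell}$ as a join of principal ideals in $\M A$ and the care needed to push the frame distributivity through: one must be sure that the join in $\M A$ of the principal ideals ${\downarrow}(a\wedge x)$, all of which are contained in ${\downarrow}b$, is still contained in ${\downarrow}b$ — this is clear since ${\downarrow}b$ is itself a normal ideal and hence an upper bound in $\M A$ for that family, so the join (the least such upper bound) is below it. The remaining steps are essentially bookkeeping with the inclusions already granted in the excerpt.
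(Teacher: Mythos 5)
Your proposal is correct, but note that the paper does not actually prove this proposition: it is imported verbatim from \cite[Thm.~6.3]{ball_dedekind-macneille_2016}, so there is no in-paper argument to compare against. Your self-contained proof is sound and uses exactly the ingredients the surrounding text makes available. For $(1)\Rightarrow(3)$ you correctly combine the fact that D-ideals are precisely intersections of relative annihilators (the remark following \cref{defn: BA}) with the closure of $\M A$ under arbitrary intersections (each normal ideal being an intersection of principal ideals); together with the standing inclusion $\M A\subseteq\BL A$ this gives equality. The delicate direction is $(2)\Rightarrow(1)$, and you handle it properly: $\langle a,b\rangle^{u\ell}$ is indeed the join in $\M A$ of the principal ideals $\{{\downarrow}x : x\in\langle a,b\rangle\}$, because joins in $\M A$ are MacNeille closures of unions and $\bigcup_{x\in\langle a,b\rangle}{\downarrow}x=\langle a,b\rangle$ as $\langle a,b\rangle$ is a downset; the frame law then gives ${\downarrow}a\cap\langle a,b\rangle^{u\ell}=\bigvee_x{\downarrow}(a\wedge x)\subseteq{\downarrow}b$, and the final step (for $y\in\langle a,b\rangle^{u\ell}$, the element $a\wedge y$ lies in ${\downarrow}a\cap\langle a,b\rangle^{u\ell}$ because normal ideals are downsets) closes the argument. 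The equivalence $(2)\Leftrightarrow(4)$ is the classical $\CHeyt=\Frm$ identification already invoked in the introduction. In short: nothing is missing, and your write-up could serve as a proof of the cited result rather than a mere pointer to it.
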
 

\begin{remark}
    For distributive lattices these equivalences are already in Cornish \cite[Thm.~3.4]{cornish_kernels_1978}.
\end{remark}

The finitary version of $\BL A$ was introduced in \cite[Defn.~3.9]{bezhanishvili_dedekind-macneille_2024} as the proHeyting extension of $A$ wherein it was assumed that $A\in\DLat$. We point out that the same construction works for an arbitrary meet-semilattice. 

\begin{definition}\label{def: proH ext}
    The \emph{proHeyting extension} of a meet-semilattice $A$ is the bounded sublattice $\pH A$ of $\BL A$ generated by $\R A$, the relative annihilators of $A$. 
\end{definition}
The following results were proved in \cite[Sec.~3]{bezhanishvili_dedekind-macneille_2024} for  $A\in\DLat$, but the same proofs work in more generality. 

\begin{proposition} Let $A$ be a meet-semilattice. 
    \begin{enumerate}[label=\normalfont(\arabic*), ref = \theproposition(\arabic*)]
        \item $\pH A$ is proHeyting.  
        \item $A = \pH A$ if and only if $A$ is a Heyting lattice. \label[proposition]{prop: pH A 2}
        \item $\pH A =\BL A$ if and only if $\pH A$ is a frame. 
        \item $\M (\pH A) \cong \BL A \cong \BL (\pH A)$. \label[proposition]{prop: pH A 4}
       \end{enumerate}
\end{proposition}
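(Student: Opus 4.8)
The plan is to prove the four statements in sequence, relying heavily on the fact that $\BL A$ is the injective hull of $A$ in the category of meet-semilattices, and on \cref{thm: MA frame} which tells us that being proHeyting is equivalent to $\M A = \BL A$.

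\emph{For (1):} I want to show $\R(\pH A) \subseteq \M(\pH A)$. The key observation is that $\pH A$ sits join-densely (indeed, it sits between $A$ and $\BL A$, and $A$ is join-dense in $\BL A$) inside the frame $\BL A$. In a frame, every relative annihilator $\langle a, b\rangle$ is principal, since $\BL A$ is Heyting. So for $u, v \in \pH A$, the relative annihilator $\langle u, v\rangle_{\pH A} = \{w \in \pH A : u \wedge w \le v\}$ is the restriction to $\pH A$ of the principal downset ${\downarrow}(u \to v)$ taken in $\BL A$, i.e.\ $\langle u,v\rangle_{\pH A} = {\downarrow}(u\to v) \cap \pH A$. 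To conclude that this is a normal ideal of $\pH A$, I would show it equals an intersection of principal ideals of $\pH A$: writing $u \to v = \bigvee\{w \in \pH A : w \le u\to v\}$ (using join-density of $\pH A$ in $\BL A$), one gets that the set of upper bounds in $\pH A$ of $\langle u,v\rangle_{\pH A}$, intersected back with $\pH A$ and taking lower bounds, recovers $\langle u,v\rangle_{\pH A}$. This is the standard argument that a join-dense sub-meet-semilattice of a frame is proHeyting; I expect the bookkeeping to be the fiddly part, but it is routine.

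\emph{For (2):} If $A$ is a Heyting lattice, then $\R A = \{{\downarrow}a : a\in A\}$ (each relative annihilator is principal), so the bounded sublattice of $\BL A$ generated by $\R A$ is just $A$ itself (identified via \cref{conv}), and $A$ has $0$ because $\pH A$ does; hence $A = \pH A$. Conversely, if $A = \pH A$, then $A$ is a bounded lattice and, by (1), $A$ is proHeyting; moreover $A$ contains all its relative annihilators as elements. But a meet-semilattice in which every relative annihilator $\langle a,b\rangle$ is a \emph{principal} downset is exactly a Heyting lattice (the top of $\langle a,b\rangle$ is $a\to b$). So I need: if $A = \pH A$ then every $\langle a,b\rangle$ is principal. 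This follows since $\langle a,b\rangle \in \R A \subseteq \pH A = A$, and an element of $A$ viewed inside $\BL A$ is a principal downset by \cref{conv}. This is essentially \cref{prop: pH A 2} as cited from \cite{bezhanishvili_dedekind-macneille_2024}, adapted from $\DLat$ to meet-semilattices, and the adaptation is transparent.

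\emph{For (3) and (4):} For (3), one direction is trivial since $\BL A$ is always a frame. For the converse, if $\pH A$ is a frame then, being a join-dense sub-meet-semilattice of $\BL A$ that is itself complete would force $\pH A = \BL A$; but $\pH A$ need not be complete, so instead I use: $\pH A$ is proHeyting by (1), so $\M(\pH A) = \BL(\pH A)$ by \cref{thm: MA frame}; combined with (4) this gives $\pH A = \BL A$ once we know $\pH A$ is a frame (a proHeyting lattice that is already a frame equals its own DM-completion, which equals its BL-completion). The real work is (4), and it is the main obstacle. I would prove $\M(\pH A) \cong \BL A$ by showing that the normal ideals of $\pH A$ are in bijection with D-ideals of $A$: the map sends a D-ideal $E$ of $A$ to $\{u \in \pH A : u \le \bigvee E \text{ in } \BL A\} = {\downarrow}(\bigvee E)\cap \pH A$, using that every D-ideal of $A$ corresponds to its join in $\BL A$. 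One checks this lands in $\M(\pH A)$ precisely because $\pH A$ is join-dense in the frame $\BL A$ (so the argument of (1) applies), and that it is an order-isomorphism with inverse $N \mapsto \{a \in A : {\downarrow}a \subseteq N\}$, i.e.\ restriction back to $A$. Then $\BL(\pH A) \cong \BL A$ follows because $\pH A$ is proHeyting, whence $\BL(\pH A) = \M(\pH A)$ by \cref{thm: MA frame}. The crux throughout is the general lemma that if $A$ is join-dense in a frame $L$, then $\M L \cong \BL L \cong \BL A$ and the normal ideals of any intermediate join-dense sub-meet-semilattice coincide with the D-ideals of $A$; establishing this cleanly is where I would spend most of the effort.
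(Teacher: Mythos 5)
The central device you rely on---that a join-dense sub-meet-semilattice of a frame is automatically proHeyting, so that ${\downarrow}c\cap\pH A$ is a normal ideal of $\pH A$ for any $c\in\BL A$ by join-density alone---is false, and this breaks both your proof of (1) and the surjectivity step in your proof of (4). Every meet-semilattice $A$ embeds join-densely into the frame $\BL A$, yet not every meet-semilattice is proHeyting: the lattice $P$ of \cref{ex: ladder plus N} and the Funayama lattice of \cref{example: Funayama} are both join-dense in their (frame) BL-completions and both fail to be proHeyting. Concretely, join-density gives you that the upper bounds of $N={\downarrow}c\cap\pH A$ in $\pH A$ are exactly ${\uparrow}c\cap\pH A$, but to conclude $N^{u\ell}=N$ you then need that any $x\in\pH A$ lying below \emph{every} element of $\pH A$ above $c$ already lies below $c$; that is a meet-density requirement on $\pH A$ in $\BL A$, not a join-density one, and nothing in your argument supplies it.

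What is missing is precisely the defining feature of $\pH A$ that distinguishes it from an arbitrary join-dense sub-meet-semilattice: it contains all of $\R A$, and every D-ideal of $A$ is an intersection of relative annihilators (as the paper notes after \cref{defn: BA}). Hence every $c\in\BL A$ is a meet in $\BL A$ of elements of $\R A\subseteq\pH A$, i.e.\ $\pH A$ is meet-dense in $\BL A$. With that in hand your bijection in (4) goes through: if $x\in\pH A$ and $x\not\le c$, pick $a\in A$ with $a\le x$, $a\not\le c$, write $c=\bigcap_i\langle a_i,b_i\rangle$, and choose $i$ with $a\notin\langle a_i,b_i\rangle$; then $b=\langle a_i,b_i\rangle\in\pH A$ is an upper bound of ${\downarrow}c\cap\pH A$ not above $x$, so ${\downarrow}c\cap\pH A$ is normal. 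This yields $\M(\pH A)\cong\BL A$, from which (1) follows via \cref{thm: MA frame} (which is in fact the order of deduction in the cited reference: its Thm.~3.12 gives (4) first and (1) is a corollary), and your arguments for (2) and (3), which are essentially fine, can then be kept as they are.
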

\begin{proof} We refer to 
   \cite{bezhanishvili_dedekind-macneille_2024}:
   for (1) see Cor.~3.13(1),
    for (2) see Prop.~3.11(1),
    for (3) see Prop.~3.11(2), and
    for (4) see Thm.~3.12 and Cor.~3.13(2).
\end{proof}

\begin{definition}  We say that a meet-semilattice $A$ is {\em JID} if every existent join in $A$ is a distributive join.   
       \end{definition}

As an immediate consequence of \cref{thm: MA frame} we obtain:

\begin{proposition}
\cite[Prop.~3.7]{ball_dedekind-macneille_2016}    
\label{cor: ProH implies JID}
If $A$ is proHeyting, then $A$ is JID.    
\end{proposition}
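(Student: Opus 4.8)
The plan is to derive JID directly from the frame structure of $\M A$. By \cref{thm: MA frame}, if $A$ is proHeyting then $\M A$ is a frame, and by \cref{conv} the assignment $a\mapsto{\downarrow}a$ embeds $A$ into $\M A$ as a sub-meet-semilattice. So I would fix $S\subseteq A$ such that $b:=\bigvee S$ exists in $A$, fix $a\in A$, and set $c:=a\wedge b$; the goal is to show that $\bigvee\{a\wedge s:s\in S\}$ exists in $A$ and equals $c$, since $S$ was an arbitrary existent join.

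First I would record two computations of joins in $\M A$, both of which use only that $b=\bigvee S$ (and not proHeyting): joins of normal ideals are the $(-)^{u\ell}$-closures of unions, and $b=\bigvee S$ forces $S^u={\uparrow}b$, so $\bigvee_{\M A}\{{\downarrow}s:s\in S\}=S^{u\ell}={\downarrow}b$; likewise $\bigvee_{\M A}\{{\downarrow}(a\wedge s):s\in S\}=\{a\wedge s:s\in S\}^{u\ell}$. Next I would invoke the join-infinite distributive law in the frame $\M A$. Meets in $\M A$ are intersections, so ${\downarrow}a\cap{\downarrow}b={\downarrow}c$ and ${\downarrow}a\cap{\downarrow}s={\downarrow}(a\wedge s)$, whence JID in $\M A$ gives
\[
{\downarrow}c \;=\; {\downarrow}a\cap\bigvee_{\M A}\{{\downarrow}s:s\in S\} \;=\; \bigvee_{\M A}\{{\downarrow}(a\wedge s):s\in S\} \;=\; \{a\wedge s:s\in S\}^{u\ell}.
\]
Finally, for $T\subseteq A$ and $c\in A$, one has ${\downarrow}c=T^{u\ell}$ if and only if $c=\bigvee T$ in $A$ (the inclusion $T\subseteq{\downarrow}c$ says $c$ bounds $T$ above, and $c\in T^{u\ell}$ says $c$ lies below every upper bound of $T$). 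Applying this with $T=\{a\wedge s:s\in S\}$ yields $a\wedge\bigvee S=\bigvee\{a\wedge s:s\in S\}$, so $A$ is JID.

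I do not expect a serious obstacle: essentially all the content is carried by \cref{thm: MA frame}, matching the paper's description of this as an immediate consequence. What remains are the routine facts that joins in $\M A$ are $(-)^{u\ell}$-closures of unions, that $A\hookrightarrow\M A$ preserves finite meets, and the dictionary between ``${\downarrow}c=T^{u\ell}$'' and ``$c=\bigvee T$''; the only place the hypothesis is used is the single application of the frame law inside $\M A$. One could instead attempt to argue within $\BL A$, but there the join of $\{{\downarrow}s:s\in S\}$ need not equal ${\downarrow}b$ unless $S$ is already admissible, so passing through $\M A$ is the clean route.
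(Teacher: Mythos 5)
Your proposal is correct and follows the same route as the paper: both derive the result from \cref{thm: MA frame} by passing to the frame $\M A$ and using that the embedding $a\mapsto{\downarrow}a$ preserves (and reflects) existing joins. You simply make explicit the details the paper leaves implicit, namely that $\bigvee_{\M A}\{{\downarrow}s : s\in S\}=S^{u\ell}={\downarrow}\bigvee S$ and that ${\downarrow}c=T^{u\ell}$ is equivalent to $c=\bigvee T$ in $A$.
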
 

\begin{proof}
  If $A$ is proH, then $\M A$ is a frame by \cref{thm: MA frame}. Since the embedding of $A$ in $\M A$ preserves all existing joins, we conclude that $A$ must be JID.
\end{proof}

On the other hand, the converse of \cref{cor: ProH implies JID} is not true in general, as we show in \cref{ex: ladder plus N}. For an ordinal $\alpha$, define the {\em $\alpha$-ladder} to be the product of $\alpha$ with the two-element chain $\{0,1\}$. We point out that the $\alpha$-ladder is bounded if and only if $\alpha$ is a successor. We also recall \cite[p.~17]{davey_introduction_2002} that the {\em linear sum} of two lattices $A$ and $B$ is the lattice $A\oplus B$ obtained by putting $B$ on top of $A$.

\begin{example}\label{ex: ladder plus N}
Let $P=(\omega\text{-ladder}) \oplus \omega^{\mathrm{op}}$.
For the sake of avoiding unnecessarily complicated notation, we consider the elements of $P$ as labelled in \cref{fig: countable long ladder+N} with the intended order to be read horizontally:

\begin{center}
    \begin{figure}[!ht]
\begin{tikzpicture}
  
\node  (x1) at (0,0) {$\bullet$};\node  at (0,-.25) {$\scriptstyle{0}$};
  \node  (x2) at (1,0) {$\bullet$};\node  at (1,-.25) {$\scriptstyle{c_0}$};
  \node  (x3) at (2,0) {$\bullet$};\node  at (2,-.25) {$\scriptstyle{c_1}$};
\node(x5) at (4,0) {};
\node (x6) at (5,0){};
\node  (z1) at (1,1) {$\bullet$};\node  at (1,1.25) {$\scriptstyle{b_0}$};
  \node  (z2) at (2,1) {$\bullet$};\node  at (2,1.25) {$\scriptstyle{b_1}$};
  \node  (z3) at (3,1) {$\bullet$};\node  at (3,1.25) {$\scriptstyle{b_2}$};
\node(z5) at (5,1) {};
\node(z6) at (6,1) {};
  \node  (y3) at (7,1) {$\bullet$};\node  at (7,1.25) {$\scriptstyle{a_1}$};
 \node  (y2) at (8,1) {$\bullet$};\node  at (8,1.25) {$\scriptstyle{a_0}$};
  \node  (y1) at (9,1) {$\bullet$};\node  at (9,1.25) {$\scriptstyle{1}$};
  \draw (x1) --(x2) -- (x3); 
   \draw (z1) --(z2) -- (z3); 
   \draw{(x1)--(z1)};
    \draw{(x2)--(z2)};
     \draw{(x3)--(z3)};
     \draw{(y3)--(y2)--(y1)};
 \draw[dashed](x3) --(x5);
  \draw[dashed]{(z3)--(z5)};
 \draw[dashed](z5)--(y3);

\end{tikzpicture}
\caption{$(\omega\text{-ladder})\oplus\omega^{\mathrm{op}}$}
       \label{fig: countable long ladder+N}
    \end{figure}
    \end{center}

    \vspace{-3mm}
    
    As was pointed out in \cite[Ex.~2.4]{bezhanishvili_funayamas_2013}, $P$ is JID since each join in $P$ is finite. On the other hand, $P$ is not proHeyting because the relative annihilator $\langle b_0, 0 \rangle = \bigcup_n {\downarrow}c_n$ is not a normal ideal (since $\langle b_0, 0 \rangle^{ul} = \bigcup_n {\downarrow}b_n$ properly contains $\langle b_0, 0 \rangle$). Therefore, $\M P$ is not a frame by \cref{thm: MA frame}. In fact, $\bigcup_n {\downarrow}b_n$ is the only non-principal normal ideal of $P$ and $\langle b_0,0 \rangle$ is the only D-ideal of $P$ that is not normal. Thus, $\M P$ is obtained by adding $\bigcup_n {\downarrow}b_n$ to $P$, while $\BL P$ by further adding $\langle b_0,0 \rangle$ to $\M P$. Since $\M P\in\DLat$,
    $P$ is an example of a JID lattice such that $\M P$ is still distributive but not JID. In \cref{example: Funayama} we will see that JID does not even guarantee distributivity of the Dedekind-MacNeille completion.
    \end{example}

\section{The BL-tower of a meet-semilattice}

For any regular cardinal $\kappa$, a {\em $\kappa$-set} is a set of cardinality strictly less than $\kappa$ and a {\em $\kappa$-join} is a join of a $\kappa$-set. We have the following obvious generalizations of completeness and JID.

\begin{definition} \label{def: kappa JD} 
    For a meet-semilattice $A$,
    \begin{enumerate}[label=\upshape(\arabic*), ref = \thedefinition(\arabic*)]
    \item $A$ is \emph{$\kappa$-complete} if every $\kappa$-join exists.
     \item $A$ is {\em $\kappa$-join distributive}, abbreviated as $\kappa$JD, if every existent $\kappa$-join is a distributive join. \label[definition]{def: kappa JD 2}
      \item \cite[Defn.~1.1]{madden_kappa-frames_1991} $A$ is a {\em $\kappa$-frame} if it is both $\kappa$-complete and $\kappa$JD. 
   \end{enumerate}     
       \end{definition}
     
   \begin{remark} If $\lambda \leq \kappa$ then $\kappa$-complete/$\kappa$JD implies $\lambda$-complete/$\lambda$JD.  
   Clearly $A$ is a distributive lattice if and only if $A$ is $\omega_0$JD and $A$ is a frame if and only if $A$ is a $\kappa$-frame for $\kappa\ge|A|$.
    \end{remark}  

\begin{definition}
    Let $A$ be a meet-semilattice. 
    \begin{enumerate}
        \item For a regular cardinal $\kappa$, let $\BL_\kappa A$ be the sub-$\kappa$-frame of $\BL A$ generated by $A$. 
    \item We call the following inclusions
    \[
    A \subseteq \BL_{\omega_0} A \subseteq \BL_{\omega_1} A \subseteq \cdots \subseteq \BL_{\kappa} A \subseteq \cdots \BL A
    \]
    the \emph{Bruns-Lakser tower} or simply the \emph{BL-tower} of $A$.
    \item Let $\delta_{\BL}(A)$ be the least cardinal such that $\BL_\kappa A = \BL A$. We call $\delta_{\BL}(A)$ the {\em BL-degree of $A$}. 
    \end{enumerate}
\end{definition}

\begin{remark}\
\begin{enumerate}[label=\upshape(\arabic*), ref = \theremark(\arabic*)]
    \item Since $A$ is a sub-meet-semilattice of $\BL A$, $A$ is $\kappa$-join-dense in $\BL_\kappa A$. \label[remark]{k joins}
    \item $\BL_{\omega_0} A$ is the bounded  distributive sublattice of $\BL A$ generated by $A$, while $\BL_{\omega_1} A$ is the $\sigma$-subframe of $\BL A$ generated by $A$. Thus, the above definition provides a cardinal generalization of 
    \cite[Defn.~3.2~and~4.5]{bezhanishvili_semilattice_2024}, where $\BL_{\omega_0} A$ is denoted by $D A$, $\BL_{\omega_1} A$ by $D\!_\sigma A$, and $\BL A$ by $D\!_\infty A$.
    \item Clearly $\delta_{\BL}(A) \leq |\BL A|$.
\end{enumerate}
\end{remark}

If $A$ and $B$ are meet-semilattices, we write $A\le B$ when $A$ is (isomorphic to) a sub-meet-semilattice of $B$. The next theorem allows us to identify $\BL B$ with $\BL A$, and will play a key r\^ole in what follows. 

\begin{theorem}\label{BL A = BL B}
    Let $A,B$ be meet-semilattices with $A\le B\le \BL A$. Then $\BL B \cong \BL A$.
\end{theorem}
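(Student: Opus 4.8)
The plan is to deduce this from the fact, recalled in the Remark following \cref{defn: BA}, that for every meet-semilattice $C$ the frame $\BL C$ is the \emph{injective hull} of $C$ in the category $\SL$ of meet-semilattices: $\BL C$ is an injective object and the canonical embedding $C\hookrightarrow\BL C$ is an \emph{essential} extension. Since injective hulls are unique up to isomorphism, it suffices to show that $\BL A$ is \emph{also} an injective hull of $B$; as $\BL A$ is already known to be injective, the only thing left to check is that the inclusion $B\hookrightarrow\BL A$ is essential.

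To verify essentiality, recall that monomorphisms in $\SL$ are precisely the injective homomorphisms. Let $\map{f}{\BL A}{X}$ be a homomorphism whose restriction $f|_B$ is a monomorphism. Because $A\le B$, the further restriction $f|_A$ is then injective as well, hence a monomorphism; since $A\hookrightarrow\BL A$ is essential, $f$ itself is a monomorphism. The converse implication (if $f$ is mono then so is $f|_B$) is immediate. Thus $B\hookrightarrow\BL A$ is an essential embedding into an injective object, i.e.\ $\BL A$ is an injective hull of $B$. As $\BL B$ is likewise an injective hull of $B$, uniqueness of injective hulls produces an isomorphism $\BL B\cong\BL A$ (indeed one restricting to the identity on $B$), which is the assertion.

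I expect no serious obstacle: the entire content is in invoking that $\BL(-)$ is an injective \emph{hull} (so the Bruns--Lakser embedding is essential, not merely that $\BL A$ is an injective object) together with the standard uniqueness of injective hulls. If one prefers to avoid citing that uniqueness, one can argue directly: by injectivity of $\BL B$ extend $B\hookrightarrow\BL B$ along $B\hookrightarrow\BL A$ to a homomorphism $\map{g}{\BL A}{\BL B}$, and dually obtain $\map{h}{\BL B}{\BL A}$ extending $B\hookrightarrow\BL A$ along $B\hookrightarrow\BL B$. Then $h\circ g$ restricts to the identity on $B$, hence on $A$; and any endomorphism of $\BL A$ restricting to the identity on $A$ is an isomorphism (it is a monomorphism by essentiality of $A\hookrightarrow\BL A$, hence split by injectivity of $\BL A$, and its splitting is again a monomorphism by essentiality and a split epimorphism, hence bijective). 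Applying the analogous observation to $g\circ h$ via essentiality of $B\hookrightarrow\BL B$ shows $g$ and $h$ are mutually inverse isomorphisms, so $\BL B\cong\BL A$. In this variant the obstacle is the same elementary circle of facts about monos, injectives, and essential extensions.
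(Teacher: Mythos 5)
Your argument is correct and is essentially the paper's own proof: both show that $\BL A$ is an essential extension of $B$ (by restricting a map that is monic on $B$ further to $A$ and invoking essentiality of $A\hookrightarrow\BL A$), conclude that $\BL A$ is an injective hull of $B$, and then apply uniqueness of injective hulls. The paper simply labels the essentiality step ``trivial'' and cites \cite{balbes_distributive_1974} for uniqueness, which you optionally unfold by hand.
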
 

\begin{proof} Since $\BL A$ is an essential extension of $A$ \cite[Thm.~2]{bruns_injective_1970} and $A\le B\le \BL A$, $\BL A$ is trivially also an essential extension of $B$. 
But $\BL A$ is injective \cite[Cor.~1]{bruns_injective_1970}, so
$\BL A$ is an injective hull of $B$ as defined in \cite[\S 2] {bruns_injective_1970}. However, $\BL B$ is also an injective hull of $B$ \cite[Cor.~2]{bruns_injective_1970}. By uniqueness of the injective hull \cite[p.~34]{balbes_distributive_1974}, we conclude that $\BL B \cong \BL A$.
\end{proof}

\begin{convention}\label{conv: BL A=BL B}
    From now on, for $A\le B\le \BL A$, we will identify $\BL B$ with $\BL A$ and think of $A$ and $B$ as sub-meet-semilattices of $\BL A$. 
\end{convention}

We thus arrive at the following diagram (note that $\delta_{\BL}(A) \geq \delta_{\BL}(B)$):

\begin{figure}[!ht]
\adjustbox{scale=.85}{
\begin{tikzcd}
	B & {\BL_{\omega_0}B} && {\BL_{\kappa}B} && {\BL_{\delta_{\BL}(B)}B} & {\BL_{\delta_{\BL}(A)}B} & \BL B \\
	A & {\BL_{\omega_0}A} && {\BL_{\kappa}A} && {\BL_{\delta_{\BL}(B)}A}& {\BL_{\delta_{\BL}(A)}A} & \BL A
	\arrow[hook, from=1-1, to=1-2]
	\arrow[dashed, hook, from=1-2, to=1-4]
	\arrow[dashed, hook, from=1-4, to=1-6]
	\arrow[Rightarrow, no head, from=1-6, to=1-7]
	\arrow[Rightarrow, no head, from=1-7, to=1-8]
	\arrow[Rightarrow, no head, from=1-8, to=2-8]
	\arrow[hook, from=2-1, to=1-1]
	\arrow[hook, from=2-1, to=2-2]
	\arrow[hook, from=2-2, to=1-2]
	\arrow[dashed, hook, from=2-2, to=2-4]
	\arrow[hook, from=2-4, to=1-4]
	\arrow[dashed, hook, from=2-4, to=2-6]
 \arrow[dashed, hook, from=2-6, to=2-7]
	\arrow[hook, from=2-6, to=1-6]
	\arrow[Rightarrow, no head, from=2-7, to=2-8]
  \arrow[Rightarrow, no head, from=1-7, to=2-7]
\end{tikzcd}
}
 \caption{BL-towers of $A \leq B$}
        \label{BL-towers of A,B}
       \end{figure}

\begin{theorem}\label{sub kappa frame} Let $A,B$ be meet-semilattices with $A\le B\le \BL A$. The following are equivalent: 
\begin{enumerate}[label=\upshape(\arabic*), ref = \thetheorem(\arabic*)]
    \item $B$ is a $\kappa$-frame.
    \item $B$ is a sub-$\kappa$-frame of $\BL A$.
    \item $\BL_\kappa(B) = B$.
\end{enumerate}
\end{theorem}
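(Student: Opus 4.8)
The plan is to prove the cycle $(1)\Rightarrow(3)\Rightarrow(2)\Rightarrow(1)$, using \cref{BL A = BL B} to pass freely between $\BL B$ and $\BL A$ (so by \cref{conv: BL A=BL B} all of $A$, $B$, $\BL_\kappa A$, $\BL_\kappa B$ sit inside the single frame $\BL A$). The implications $(3)\Rightarrow(2)\Rightarrow(1)$ should be essentially formal: if $\BL_\kappa B = B$, then $B$ is literally the sub-$\kappa$-frame of $\BL A$ generated by $B$, hence it is a sub-$\kappa$-frame of $\BL A$; and any sub-$\kappa$-frame of $\BL A$ is in particular $\kappa$-complete and satisfies $\kappa$JD, because $\BL A$ is a frame and the inclusion preserves the relevant $\kappa$-joins and finite meets, so $B$ is a $\kappa$-frame.

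The substantive direction is $(1)\Rightarrow(3)$. Here I would argue that if $B$ is a $\kappa$-frame, then $B$ is already closed, inside $\BL A$, under the operations used to generate $\BL_\kappa B$ from $B$ — finite meets and existing $\kappa$-joins computed in $\BL A$. Closure under finite meets is immediate since $B\le \BL A$ as a meet-semilattice. For $\kappa$-joins the key point is: if $S\subseteq B$ is a $\kappa$-set whose join $\bigvee^{\BL A} S$ exists in $\BL A$, I must show this join already lies in $B$ and agrees with the join taken in $B$. Since $B$ is $\kappa$-complete, $\bigvee^B S$ exists in $B$; since $B$ is $\kappa$JD, this join is a distributive join in $B$; and since $B$ is $\kappa$-join-dense in $\BL_\kappa B$ by \cref{k joins} (applied to $B$ in place of $A$), and more to the point $B\le\BL A$ with $A$ join-dense in $\BL A$, one shows $\bigvee^B S$ computed in $B$ coincides with the join in $\BL A$ — a distributive join in $B$ remains a join (indeed a distributive join) in any frame extension into which $B$ embeds preserving the witnessing meets. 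Hence every generator-step stays inside $B$, so $\BL_\kappa B \subseteq B$; the reverse inclusion $B\subseteq\BL_\kappa B$ is automatic, giving $\BL_\kappa B = B$.

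The main obstacle I anticipate is the bookkeeping in $(1)\Rightarrow(3)$ around \emph{which} $\kappa$-joins get used when generating $\BL_\kappa B$ inside $\BL A$: a priori one only controls joins of $\kappa$-subsets of $B$ at the first step, but later steps involve $\kappa$-joins of elements already produced, so the argument needs an induction (transfinite, along the generation process) showing that at every stage the set constructed so far is contained in $B$ and that $B$ absorbs the next round of finite meets and existing $\kappa$-joins. The clean way to handle this is to observe that $B$ is itself a sub-$\kappa$-frame of $\BL A$ once we know every existing $\kappa$-join of a subset of $B$ (taken in $\BL A$) lies in $B$ — which reduces the whole matter to the single claim about one $\kappa$-join described above — and then $\BL_\kappa B$, being the \emph{smallest} sub-$\kappa$-frame of $\BL A$ containing $B$, must equal $B$. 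So the real content is exactly the compatibility of $\kappa$JD joins in $B$ with joins computed in $\BL A$, and that is where the proof should concentrate its effort.
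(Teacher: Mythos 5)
Your reduction of the theorem to a single claim---that for a $\kappa$-set $S\subseteq B$ the join $\bigvee_B S$, which exists and is distributive because $B$ is a $\kappa$-frame, coincides with the join of $S$ computed in $\BL A$---is the right reduction, and your implications $(3)\Rightarrow(2)\Rightarrow(1)$ are fine. The gap is in how you discharge that single claim. You appeal to the principle that ``a distributive join in $B$ remains a join (indeed a distributive join) in any frame extension into which $B$ embeds preserving the witnessing meets.'' That principle is false. Take $B=\{0,1\}\cup\{a_i : i\in\omega\}$ with the $a_i$ pairwise incomparable, $a_i\wedge a_j=0$ for $i\ne j$, and $\bigvee_B a_i=1$; one checks this join is distributive in $B$. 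Embed $B$ into the frame $L=\mathcal{P}(\omega)\oplus 1$ (the powerset with a new top adjoined) by $0\mapsto\varnothing$, $a_i\mapsto\{i\}$, and $1\mapsto$ the new top. This is a meet-semilattice embedding preserving all finite meets and the bounds, yet $\bigvee_L\{i\}=\omega$ lies strictly below the image of $1$, so the distributive join is not preserved. Note that your argument never actually uses the hypothesis $B\le\BL A$ at this step---it is stated for an arbitrary frame extension---which is the warning sign: preservation of finite meets is not what makes distributive joins survive.

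What does make them survive, and what the paper's proof uses, is that the extension $B\le\BL A$ is \emph{essential}: since $\BL A$ is an essential extension of $A$ and $A\le B\le\BL A$, it is automatically an essential extension of $B$ as well, and by Bruns--Lakser's Theorem~2 the embedding of a meet-semilattice into such an extension preserves all distributive joins. (Equivalently, identify $\BL A$ with $\BL B$ via \cref{BL A = BL B} and invoke the fact that the canonical embedding $B\hookrightarrow\BL B$ preserves distributive joins.) Your appeal to join-density of $A$ in $\BL A$ points in the right direction but does not close the gap on its own: if $c=\bigvee_{\BL A}S$ were strictly below $b=\bigvee_B S$, join-density yields $a\in A$ with $a\le b$ and $a\not\le c$, and distributivity in $B$ gives $a=\bigvee_B\{a\wedge s : s\in S\}$ with every $a\wedge s\le c$; but since $c$ need not belong to $B$ you cannot conclude $a\le c$ from the fact that $c$ bounds the set $\{a\wedge s : s\in S\}$ whose least upper bound was computed in $B$. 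So the essentiality input (or an equivalent direct computation with D-ideals) is genuinely needed; it is not bookkeeping. Once that claim is in place, your transfinite-generation worry evaporates exactly as you say: $B$ is then itself a sub-$\kappa$-frame of $\BL A$ containing $B$, hence contains, and therefore equals, $\BL_\kappa B$.
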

\begin{proof}
That (3)$\Leftrightarrow$(2) and (2)$\Rightarrow$(1) are straightforward. We prove (1)$\Rightarrow$(2). 
As we pointed out in the proof of \cref{BL A = BL B}, $\BL A$ is an essential extension of $B$. Therefore, by \cite[Thm.~2]{bruns_injective_1970}, the embedding $B\hookrightarrow \BL A$ preserves distributive joins. Thus, since $B$ is a $\kappa$-frame, it is a sub-$\kappa$-frame of $\BL A$.
\end{proof}

As an immediate consequence of \cref{BL A = BL B} we also obtain:

    \begin{proposition}\ \label{BL of DM} For a meet-semilattice $A$ and regular cardinal $\kappa$,
    \begin{enumerate}[label=\upshape(\arabic*), ref = \theproposition(\arabic*)]
        \item  $\BL (\BL_\kappa A) = \BL A$.
        \item  $\BL (\M A) = \BL A$. \label[proposition]{BL of DM 2}
        \item $\BL(\pH A) = \BL A$. \label[proposition]{BL of DM 3}     
    \end{enumerate}
\end{proposition}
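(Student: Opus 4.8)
The plan is to observe that all three claims are instances of a single fact: each of $\BL_\kappa A$, $\M A$, and $\pH A$ is a sub-meet-semilattice of $\BL A$ that contains $A$, so each is sandwiched as $A \le B \le \BL A$ with $B$ the relevant lattice, and then \cref{BL A = BL B} applies verbatim to give $\BL B \cong \BL A$; under \cref{conv: BL A=BL B} this isomorphism becomes the asserted equality. So the real content is just to check the sandwiching in each case.

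First, for (1): by definition $\BL_\kappa A$ is the sub-$\kappa$-frame of $\BL A$ generated by $A$, hence in particular a sub-meet-semilattice of $\BL A$ containing $A$, so $A \le \BL_\kappa A \le \BL A$ and \cref{BL A = BL B} gives $\BL(\BL_\kappa A) \cong \BL A$. Next, for (2): the map $a \mapsto {\downarrow}a$ embeds $A$ as a sub-meet-semilattice of $\M A$; since every normal ideal is a D-ideal we have $\M A \subseteq \BL A$, and this inclusion is a sub-meet-semilattice inclusion because binary meets in both $\M A$ and $\BL A$ are computed as intersection (it need not be a sublattice, as the introduction notes, but that is irrelevant here). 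Hence $A \le \M A \le \BL A$, and \cref{BL A = BL B} with $B = \M A$ yields (2). For (3): $\pH A$ is the bounded sublattice of $\BL A$ generated by $\R A$, and since $a \mapsto {\downarrow}a = \langle 1, a \rangle$ embeds $A$ into $\R A$ (\cref{defn: ProH}), this realizes $A$ as a sub-meet-semilattice of $\pH A \le \BL A$; \cref{BL A = BL B} with $B = \pH A$ then gives (3), re-proving the relevant part of \cref{prop: pH A 4}.

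The only point that needs (a little) care — the nearest thing to an obstacle here, though it is entirely routine — is the verification in each case that we genuinely have a sub-meet-semilattice inclusion into $\BL A$, i.e.\ that finite meets in $B$ agree with intersection in $\BL A$. For $B = \BL_\kappa A$ and $B = \pH A$ this is immediate from their definitions as sub-structures of $\BL A$; for $B = \M A$ it is precisely the already-recorded fact that $\M A$ is a sub-meet-semilattice (but not in general a sublattice) of $\BL A$. Once this is in place, \cref{BL A = BL B} does all the work and no further computation is required.
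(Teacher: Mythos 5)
Your proposal is correct and is essentially the paper's own argument: the paper likewise observes that $A\le\BL_\kappa A\le\BL A$, $A\le\M A\le\BL A$, and $A\le\pH A\le\BL A$, and then invokes \cref{BL A = BL B} in each case. Your additional verification that each inclusion is a sub-meet-semilattice inclusion (in particular for $\M A$, where meets are intersections in both structures) is a sound, if routine, elaboration of what the paper leaves implicit.
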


\begin{proof}
We have $A\le\BL_\kappa A\le\BL A$, $A\le\M A\le\BL A$, and $A\le\pH A\le\BL A$. Thus, \cref{BL A = BL B} applies in each of these cases, yielding the desired result.
\end{proof}

We conclude this section by giving another characterization of proHeyting meet-semilat\-tices. For this we require the following: 

\begin{lemma}\label{lem: RA}
    Let $A,B$ be meet-semilattices with $A\le B\le \BL A$. Then $\R A$ is order-isomorphic to a subposet of $\R B$.
\end{lemma}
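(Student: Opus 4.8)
The plan is to exhibit an explicit order-embedding $\Phi\colon\R A\to\R B$ and reduce its three required properties to a single comparison lemma. Write $\langle a,b\rangle_A$ and $\langle a,b\rangle_B$ for relative annihilators computed in $A$ and in $B$ respectively. Since $A\le B$, any $a,b\in A$ also lie in $B$, so one may set $\Phi(\langle a,b\rangle_A):=\langle a,b\rangle_B$. Everything then follows from the claim: for $a,b,a',b'\in A$,
\[
\langle a,b\rangle_A\subseteq\langle a',b'\rangle_A\ \Longleftrightarrow\ \langle a,b\rangle_B\subseteq\langle a',b'\rangle_B .
\]
Granting this, applying it with the two pairs swapped shows $\Phi$ is well defined and injective, and the claim itself says that $\Phi$ and its inverse on the image both preserve order, so $\Phi$ is an order-isomorphism onto a subposet of $\R B$.

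The direction $(\Leftarrow)$ is immediate from $\langle a,b\rangle_A=\langle a,b\rangle_B\cap A$: for $x\in A$ the inequality $a\wedge x\le b$ has the same meaning in $A$ and in $B$, since the inclusion $A\le B$ preserves finite meets and order. So intersecting $\langle a,b\rangle_B\subseteq\langle a',b'\rangle_B$ with $A$ gives the inclusion in $A$.

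The substantive direction is $(\Rightarrow)$, and here I would use the two facts recalled before \cref{conv}: $\BL A$ is a frame and $A$ is join-dense in $\BL A$; together with $B\le\BL A$, so that meets and order in $B$ agree with those in $\BL A$. Assume $\langle a,b\rangle_A\subseteq\langle a',b'\rangle_A$ and take $x\in\langle a,b\rangle_B$, i.e.\ $x\in B$ with $a\wedge x\le b$. By join-density, $x=\bigvee S$ in $\BL A$, where $S=\{\,c\in A:c\le x\,\}$, and each $a\wedge c$ with $c\in S$ again lies in $A$. Frame distributivity gives $a\wedge x=\bigvee_{c\in S}(a\wedge c)\le b$, hence $a\wedge c\le b$, i.e.\ $c\in\langle a,b\rangle_A\subseteq\langle a',b'\rangle_A$, for every $c\in S$; therefore $a'\wedge c\le b'$ for all $c\in S$, and distributing again $a'\wedge x=\bigvee_{c\in S}(a'\wedge c)\le b'$. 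Thus $x\in\langle a',b'\rangle_B$, which establishes the claim.

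The only real difficulty is the bookkeeping across the three levels $A\le B\le\BL A$: one must check that finite meets and the order relation are computed consistently (they are, since both inclusions are sub-meet-semilattice embeddings) and that the infinite joins appearing in the last paragraph are taken in the frame $\BL A$ — which is precisely what makes frame distributivity applicable. A more conceptual packaging of the same argument is available and could be used instead: the relative annihilators of $A$ are exactly the D-ideals of the form $a\to b$ computed in the frame $\BL A$, so by \cref{BL A = BL B} together with the identification of \cref{conv: BL A=BL B} one has $\R A=\{\,a\to b:a,b\in A\,\}\subseteq\{\,a\to b:a,b\in B\,\}=\R B$ as subposets of $\BL A$; I would present whichever version fits the surrounding exposition best.
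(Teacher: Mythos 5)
Your proposal is correct and follows essentially the same route as the paper: the same map $\langle a,b\rangle_A\mapsto\langle a,b\rangle_B$, with order-preservation proved via join-density of $A$ in $\BL A$ and frame distributivity, and order-reflection via the observation that $\langle a,b\rangle_A=\langle a,b\rangle_B\cap A$. The only cosmetic difference is that you package well-definedness, injectivity, and the two monotonicity properties into a single biconditional, whereas the paper verifies preservation and reflection separately.
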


\begin{proof}Define $f : \R A \to \R B$ by $f(\langle a,b\rangle_A) = \langle a,b\rangle_B$. We show that $f$ is well defined and is an order-embedding.  Suppose that $\langle a,b \rangle_A \subseteq \langle c,d \rangle_A $ and $y \in \langle a,b \rangle_B $. Then $a\wedge y \le b$. Therefore, $a\wedge x\le b$ for each $x\in {\downarrow}y\cap A$. Thus, each such $x$ belongs to $\langle c,d \rangle_A$. Since $A$ is join-dense in $\BL A$, $y=\bigvee_{\BL A}({\downarrow}y\cap A)$, so 
    \[
    c\wedge y = c\wedge \bigvee_{\BL A}({\downarrow}y\cap A) = \bigvee_{\BL A}\{ c \wedge x : x \in {\downarrow}y\cap A\} \le \bigvee_{\BL A} \{ c \wedge x : x \in \langle c,d\rangle_A \} \le d, 
    \]
    and hence $y \in \langle c,d\rangle_B$. This proves that $f$ is well defined and is order-preserving. To see that $f$ is order-reflecting, let $\langle a,b \rangle_A \not\subseteq \langle c,d \rangle_A $. Then there is $x\in A$ with $a\wedge x \le b$, but $c\wedge x\not\le d$. Since $A$ is a sub-meet-semilattice of $B$, this gives that $\langle a,b \rangle_B \not\subseteq \langle c,d \rangle_B $, concluding the proof.
\end{proof}

\begin{convention}\label{conv: RA sub RB}
From now on, we will identify $\R A$ with its image in $\R B$.    
\end{convention}

\begin{proposition}\label{cor: pH A pH B}
Let $A,B$ be meet-semilattices with $A\le B\le \BL A$. Then $\pH A$ is 
a bounded sublattice of $\pH B$.
\end{proposition}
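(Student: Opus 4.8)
The plan is a straightforward chase through \cref{def: proH ext} and the identifications already established, the only mild subtlety being to make sure those identifications are mutually compatible. First I would fix the ambient object: by \cref{conv: BL A=BL B} we identify $\BL B$ with $\BL A$, and since this identification is an isomorphism of frames (the order-isomorphism of \cref{BL A = BL B} between complete lattices preserves all meets and joins), it carries the bounded sublattice $\pH B$ of $\BL B$ to a bounded sublattice of $\BL A$. So from the outset $\pH A$ and $\pH B$ are both bounded sublattices of the single frame $\BL A$, and the proposition will follow once I show $\pH A \subseteq \pH B$ as subsets of $\BL A$. Since $\pH A$ is by \cref{def: proH ext} the bounded sublattice of $\BL A$ generated by $\R A$, and $\R B \subseteq \pH B$, it suffices to show $\R A \subseteq \R B$ as subposets of $\BL A$.

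The one point that genuinely needs checking is that the order-embedding $f\colon \R A \to \R B$ of \cref{lem: RA}, which underlies \cref{conv: RA sub RB}, is compatible with the identification $\BL B = \BL A$: i.e.\ that for $a,b \in A$ the element $f(\langle a,b\rangle_A) = \langle a,b\rangle_B$ of $\BL B$ corresponds to the element $\langle a,b\rangle_A$ of $\BL A$. This is where the small computation goes. Both relative annihilators are D-ideals (closed under distributive joins), so $\langle a,b\rangle_A = \bigvee_{\BL A}\{{\downarrow}x : x \in A,\ a \wedge x \le b\}$ and, using that $A$ is join-dense in $\BL A \cong \BL B$ and that any $y \in \langle a,b\rangle_B$ dominates exactly the $x \in A$ with $a \wedge x \le a \wedge y \le b$, also $\langle a,b\rangle_B = \bigvee_{\BL B}\{{\downarrow}x : x \in A,\ a \wedge x \le b\}$. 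Since the isomorphism $\BL B \cong \BL A$ fixes the principal downsets coming from $A$, it sends the second join to the first, which is the required compatibility. (Alternatively, one may simply treat this compatibility as built into Conventions~\ref{conv: BL A=BL B} and \ref{conv: RA sub RB}.)

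With this in place the proof concludes formally: inside $\BL A$ we have $\R A \subseteq \R B \subseteq \pH B$, so the bounded sublattice of $\BL A$ generated by $\R A$, namely $\pH A$, is contained in $\pH B$; and since $\pH A$ and $\pH B$ are both bounded sublattices of $\BL A$, the meets, joins, $0$ and $1$ of $\pH A$ are those inherited from $\BL A$ and hence coincide with those of $\pH B$, so $\pH A$ is a bounded sublattice of $\pH B$. The only real obstacle is the bookkeeping in the second paragraph---reconciling the three identifications ($\BL B \cong \BL A$, $\R A \hookrightarrow \R B$, and $A,B \hookrightarrow \BL A$) so that ``$\R A \subseteq \pH B$'' is a meaningful assertion about subsets of one frame; once that is settled, everything else is immediate.
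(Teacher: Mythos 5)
Your proof is correct and follows essentially the same route as the paper's: identify $\BL B$ with $\BL A$ via \cref{conv: BL A=BL B}, use \cref{lem: RA} and \cref{conv: RA sub RB} to get $\R A\subseteq\R B\subseteq\pH B$ inside $\BL A$, and conclude since $\pH A$ is the bounded sublattice generated by $\R A$. Your second paragraph merely makes explicit a compatibility check that the paper absorbs into its conventions; it is a welcome (and correct) verification but not a different argument.
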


\begin{proof}
     By \cref{conv: BL A=BL B}, $\BL B = \BL A$, so $\R B$ is a subposet of $\BL A$, and $\pH B$ is the bounded sublattice of $\BL A$ generated by $\R B$. By \cref{lem: RA,conv: RA sub RB}, $\R A$ is a subposet of $\R B$. Therefore, since $\pH A$ is the bounded sublattice of $\BL A$ generated by $\R A$, it is a bounded sublattice of $\pH B$.
     \end{proof}

\begin{proposition} \label{char of proH}
    For a meet-semilattice $A$, the following are equivalent:
    \begin{enumerate}[label=\upshape(\arabic*), ref = \thecorollary(\arabic*)]
        \item $A$ is proHeyting.
        \item $\BL(\M A) = \M A$.
        \item $\pH(\M A) = \M A$. 
    \end{enumerate}
\end{proposition}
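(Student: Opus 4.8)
The plan is to observe that, once everything is placed inside the single frame $\BL A$ via Conventions~\ref{conv} and~\ref{conv: BL A=BL B}, each of (2) and (3) is just a reformulation of one of the equivalent conditions in \cref{thm: MA frame}. So I would establish the two biconditionals $(1)\Leftrightarrow(2)$ and $(1)\Leftrightarrow(3)$ directly, each essentially in a single line, rather than running a cycle.

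For $(1)\Leftrightarrow(2)$: since $A\le\M A\le\BL A$, \cref{BL of DM 2} gives $\BL(\M A)=\BL A$. Hence the equality $\BL(\M A)=\M A$ holds if and only if $\M A=\BL A$, and by \cref{thm: MA frame} this last equality is equivalent to $A$ being proHeyting. For $(1)\Leftrightarrow(3)$: the proHeyting extension and \cref{prop: pH A 2} were formulated for an arbitrary meet-semilattice, so applying \cref{prop: pH A 2} to the meet-semilattice $\M A$ yields $\pH(\M A)=\M A$ if and only if $\M A$ is a Heyting lattice; and $\M A$ is a Heyting lattice if and only if $A$ is proHeyting, again by \cref{thm: MA frame}. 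If a cyclic presentation is preferred instead, $(2)\Rightarrow(3)$ is immediate from the chain $\M A\le\pH(\M A)\le\BL(\M A)$ (the first inclusion because, under \cref{conv}, the principal downsets of $\M A$ are relative annihilators and so lie in $\pH(\M A)$, and the second by the very definition of $\pH$), which collapses once $\BL(\M A)=\M A$.

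I do not expect a genuine obstacle: the whole content is bookkeeping on top of \cref{thm: MA frame}, \cref{BL of DM 2}, and \cref{prop: pH A 2}. The one point that needs a moment's care is the one already flagged above, namely that $\M A$, $\pH(\M A)$, and $\BL(\M A)$ must all be read as sub-(meet-semi)lattices of the fixed frame $\BL A$ under Conventions~\ref{conv} and~\ref{conv: BL A=BL B}, so that the equalities asserted in (2) and (3) are genuine equalities in $\BL A$ rather than mere abstract isomorphisms; with that understood, the three equivalences fall out at once.
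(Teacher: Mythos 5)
Your proof is correct, and the $(1)\Leftrightarrow(2)$ part coincides with the paper's argument (both rest on $\BL(\M A)=\BL A$ from \cref{BL of DM 2} together with \cref{thm: MA frame}). Where you genuinely diverge is in handling condition (3): the paper runs a cycle, getting $(2)\Rightarrow(3)$ from the sandwich $\M A\subseteq\pH(\M A)\subseteq\BL(\M A)$ and then proving $(3)\Rightarrow(1)$ by hand via relative annihilators --- using \cref{lem: RA} to embed $\R A$ into $\R(\M A)\subseteq\pH(\M A)=\M A$, which verifies the definition of proHeyting directly. You instead prove $(1)\Leftrightarrow(3)$ in one stroke by applying \cref{prop: pH A 2} to the meet-semilattice $\M A$ (legitimate, since the paper notes that result holds for arbitrary meet-semilattices) and then invoking the equivalence of (1) and (4) in \cref{thm: MA frame}. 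Your route is shorter and leans entirely on previously established characterizations; the paper's route is more self-contained at this point (it does not need item (4) of \cref{thm: MA frame} or \cref{prop: pH A 2}) and its $(3)\Rightarrow(1)$ step exhibits explicitly why $\R A\subseteq\M A$ follows, which is the raw content of the proHeyting condition. Your care about reading all the objects as subsets of the fixed frame $\BL A$ under \cref{conv,conv: BL A=BL B} is exactly the right caveat and is implicitly what the paper does as well.
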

\begin{proof}(1)$\Rightarrow$(2): If $A$ is proHeyting, then $\BL A = \M A$ by \cref{thm: MA frame}. But $\BL A = \BL(\M A)$ by \cref{BL of DM 2}. Thus, $\BL(\M A) = \M A$.

 (2)$\Rightarrow$(3): Since it is always true that $\M A \subseteq\pH(\M A) \subseteq \BL(\M A)$, it follows from $\BL(\M A) = \M A$ that $\pH(\M A) = \M A$.

(3)$\Rightarrow$(1): Since $A \leq \M A$, \cref{lem: RA,conv: RA sub RB} imply that
$\R A \subseteq \R (\M A)$, and hence $\R A \subseteq \pH (\M A)$. By assumption, the latter is $\M A$. Thus, by definition, $A$ is proHeyting.
\end{proof}

\begin{remark}\label{rem: DM vs pH}
Using \cref{conv: BL A=BL B}, it follows from 
\cref{prop: pH A 4,BL of DM 2,char of proH}  
that $A$ proHeyting implies that 
\[
\M(\pH A) =\BL A = \BL(\M A) = \pH(\M A).
\]
Thus, $\M$ and $\pH$ commute if $A$ is proHeyting.  
The converse, however, is not true as will be demonstrated in \cref{example: Funayama}.
\end{remark}

We will be mainly concerned with the BL-towers of $A,\M A,$ and $\pH A$. Since $\M A$ and $\pH A$ are not necessarily comparable, we have the following inclusion relations between these three BL-towers shown in \cref{fig: Bl-towers}. Observe that while $\BL_{\omega_0}\pH A = \pH A$ always, $\BL_{\omega_0} A = A$ if and only if $A\in\DLat$ and $\BL_{\omega_0}(\M A) = \M A$ if and only if ${\M A \in \DLat}$.

\begin{figure}[!ht]
\adjustbox{scale =0.75}{
$
\begin{tikzcd}
{} \\
	\BL (\pH A) && \ \BL(\M A) \\
	& \BL A \\
	{\BL_{\kappa}\pH A} && {\BL_{\kappa}\M A} \\
	& {\BL_{\kappa}A} \\
	{\BL_{\omega_1}\pH A} && {\BL_{\omega_1}\M A} \\
	{\BL_{\omega_0}\pH A} & {\BL_{\omega_1}A} & {\BL_{\omega_0}\M A} \\
	\pH A & {\BL_{\omega_0}A} & \M A \\
	& A
	\arrow[Rightarrow, no head, from=2-1, to=3-2]
	\arrow[Rightarrow, no head, from=3-2, to=2-3]
 \arrow[Rightarrow, no head, from=2-1, to=2-3]
	\arrow[dashed, hook, from=4-1, to=2-1]
	\arrow[dashed, hook, from=4-3, to=2-3]
	\arrow[dashed, hook, from=5-2, to=3-2]
	\arrow[hook', from=5-2, to=4-1]
	\arrow[hook, from=5-2, to=4-3]
	\arrow[dashed, hook, from=6-1, to=4-1]
	\arrow[dashed, hook, from=6-3, to=4-3]
	\arrow[hook, from=7-1, to=6-1]
	\arrow[dashed, hook, from=7-2, to=5-2]
	\arrow[hook', from=7-2, to=6-1]
	\arrow[hook, from=7-2, to=6-3]
	\arrow[hook, from=7-3, to=6-3]
	\arrow[Rightarrow, no head, from=8-1, to=7-1]
	\arrow[hook', from=8-2, to=7-1]
	\arrow[hook, from=8-2, to=7-2]
	\arrow[hook, from=8-2, to=7-3]
	\arrow[hook, from=8-3, to=7-3]
	\arrow[hook', from=9-2, to=8-1]
	\arrow[hook, from=9-2, to=8-2]
	\arrow[hook, from=9-2, to=8-3]
\end{tikzcd}
$
}
\caption{BL-towers of $A$, $\M A$, and $\pH A$}
        \label{fig: Bl-towers}
  
    \end{figure}

\section{Hierarchies for distributive lattices}\label{sec: hierarchies}

In this section, we use the machinery of BL-towers to develop various hierarchies for distributive lattices that in some sense gauge degrees of their join-distributivity. While our hierarchies are for distributive lattices, the results of this section work in the generality of meet-semilattices. 
\subsection{BL-tower of a meet-semilattice: \texorpdfstring{$\kFrm$}--hierarchy}

  Let $\kJD$ and $\kFrm$ be the subclasses of $\DLat$ consisting of all $\kappa$JD lattices and all $\kappa$-frames, respectively. Since finite joins in distributive lattices are distributive,
 $\DLat = \omega_0 \alg{JD} = \omega_0\Frm$. Also, $A \in \Frm$ if and only if $A$ is complete and JID, and more generally, $A \in \kFrm$ if and only if $A$ is $\kappa$-complete and $\kappa$JD. We thus arrive at the following hierarchy. 
\begin{theorem}\ \label{prop: kappa JD}
    \[\begin{tikzcd}[sep=small]
	\JID & {\bigcap_\kappa \kJD} && {\kJD} && \mathord{\omega_1 \alg{JD}} & \mathord{\omega_0\alg{JD}} \\
	&&&&&&& \DLat \\
	\Frm & {\bigcap_\kappa \kFrm} && {\kFrm} && {\mathord{\omega_1\Frm}} & {\mathord{\omega_0\Frm}}
	\arrow[Rightarrow, no head, from=1-1, to=1-2]
	\arrow[dashed, hook, from=1-2, to=1-4]
	\arrow[dashed, hook, from=1-4, to=1-6]
	\arrow[hook, from=1-6, to=1-7]
	\arrow[Rightarrow, no head, from=1-7, to=2-8]
	\arrow[hook, from=3-1, to=1-1]
	\arrow[Rightarrow, no head, from=3-1, to=3-2]
	\arrow[hook, from=3-2, to=1-2]
	\arrow[dashed, hook, from=3-2, to=3-4]
	\arrow[hook, from=3-4, to=1-4]
	\arrow[dashed, hook, from=3-4, to=3-6]
	\arrow[hook, from=3-6, to=1-6]
	\arrow[hook, from=3-6, to=3-7]
	\arrow[Rightarrow, no head, from=3-7, to=2-8]
\end{tikzcd}\]
\end{theorem}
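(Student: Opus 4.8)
The plan is to check the edges of the displayed diagram one at a time; since every class appearing in it is a subclass of $\DLat$, this amounts to unwinding \cref{def: kappa JD} and the remark immediately following it, and nothing deeper is needed. I would start with the bottom-right corner. Every bounded distributive lattice has all finite joins, and finite joins in a distributive lattice are distributive, so being an object of $\DLat$ is exactly being $\omega_0$-complete and $\omega_0$JD; hence $\DLat=\omega_0\Frm=\omega_0\alg{JD}$, which accounts for the two no-head arrows into $\DLat$ and makes the bottom part of the diagram commute.

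Next come the horizontal inclusions. By the remark after \cref{def: kappa JD}, for $\lambda\le\kappa$ a $\kappa$-complete lattice is $\lambda$-complete and a $\kappa$JD lattice is $\lambda$JD; taking $\lambda\in\{\omega_0,\omega_1\}$ yields the chains $\bigcap_\kappa\kJD\hookrightarrow\kJD\hookrightarrow\omega_1\alg{JD}\hookrightarrow\omega_0\alg{JD}$ and $\bigcap_\kappa\kFrm\hookrightarrow\kFrm\hookrightarrow\omega_1\Frm\hookrightarrow\omega_0\Frm$ (the dashed and solid horizontal arrows, for regular $\kappa\ge\omega_1$). For the leftmost equalities, the inclusions $\JID\subseteq\bigcap_\kappa\kJD$ and $\Frm\subseteq\bigcap_\kappa\kFrm$ are immediate, as a JID lattice is $\kappa$JD for every $\kappa$ and a frame is a $\kappa$-frame for every $\kappa$. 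For the reverse inclusions, given $A$ in the intersection I would choose a regular cardinal $\kappa$ with $\kappa>|A|$, which exists because the regular cardinals are unbounded; then every subset of $A$ is a $\kappa$-set, so \enquote{$\kappa$-complete and $\kappa$JD} for $A$ is the same as \enquote{complete and JID}, giving $A\in\Frm$, while \enquote{$\kappa$JD} alone gives $A\in\JID$ --- this is exactly the content of the remark's observation that $A$ is a frame iff it is a $\kappa$-frame for all $\kappa\ge|A|$.

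Finally, the vertical inclusions. By the definition of a $\kappa$-frame, every $\kappa$-frame is in particular $\kappa$JD, so $\kFrm\hookrightarrow\kJD$; intersecting over $\kappa$ and using the two equalities just established gives $\Frm\hookrightarrow\JID$ (alternatively this is \cref{cor: ProH implies JID}, since frames are complete Heyting lattices and hence proHeyting), and specializing $\kappa=\omega_1$ gives $\omega_1\Frm\hookrightarrow\omega_1\alg{JD}$; the bottom vertical arrow was taken care of at the start. I do not anticipate a real obstacle: the only step requiring a moment's care is the collapse of \enquote{$\kappa$-frame for every regular $\kappa$} to \enquote{frame} in the leftmost column, which relies on the unboundedness of the regular cardinals (equivalently, that every distributive lattice is exceeded in cardinality by some regular cardinal).
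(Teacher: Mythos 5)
Your verification of the inclusions and the three equalities ($\DLat=\omega_0\alg{JD}=\omega_0\Frm$, $\JID=\bigcap_\kappa\kJD$, $\Frm=\bigcap_\kappa\kFrm$) is correct and coincides with the paper's argument, including the device of picking a regular $\kappa>|A|$ for the leftmost column. But there is a genuine gap: the hooked arrows in the diagram assert \emph{proper} inclusions (this is the paper's convention throughout --- cf.\ the introduction's remark that all such inclusions are strict --- and it is the whole point of calling these ``hierarchies''), and your proposal never addresses properness. That is in fact where essentially all the work in the paper's proof lies.

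Concretely, you would need witnesses for three kinds of strictness. For $\kJD\subsetneq\lambda\alg{JD}$ and $\kFrm\subsetneq\lambda\Frm$ when $\lambda<\kappa$, the paper uses $Q_\lambda=(\lambda\text{-ladder})\oplus 1$: regularity of $\kappa$ gives $\bigvee\{c_\gamma:\gamma<\lambda\}=1$ as a $\kappa$-join with $b_0\wedge\bigvee_\gamma c_\gamma=b_0$ but $\bigvee_\gamma(b_0\wedge c_\gamma)=0$, so $Q_\lambda\notin\kJD$, while every $\lambda$-join lives in some $(\gamma+1)$-ladder with $\gamma<\lambda$, so $Q_\lambda\in\lambda\alg{JD}$; completeness of $Q_\lambda$ then handles the $\Frm$ row. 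For the vertical strictness $\kFrm\subsetneq\kJD$ one needs a $\kappa$JD lattice that is not $\kappa$-complete, e.g.\ the chain $R_\lambda=\lambda\oplus\omega^{\mathrm{op}}$ with $\lambda<\kappa$, which is JID (being a chain) but has no join of the initial copy of $\lambda$. Note also that the strictness of $\Frm\subsetneq\JID$ (the leftmost vertical hook) follows from the same kind of example. Without these constructions the theorem as stated is not proved; with them, your outline becomes the paper's proof.
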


\begin{proof} 
As we already pointed out, $\DLat = \omega_0\alg{JD} = \omega_0\Frm$. Moreover, it is clear that $\lambda\le\kappa$ implies $\kJD\subseteq \lambda\alg{JD}$ and $\kFrm \subseteq \lambda\Frm$. 
    To see that these inclusions are proper when $\lambda<\kappa$, consider $Q_\lambda = \left(\lambda\mbox{-ladder} 
    \right) \oplus 1$. 
     To avoid unnecessarily complicated notation, 
    we label the elements of $Q_\lambda$ as in  \cref{fig:kappa long ladder}, where $\gamma<\lambda$, and again intend the order to be read horizontally.
  
    \begin{center}
    \begin{figure}[!ht]
\begin{tikzpicture}
  \node  (x1) at (0,0) {$\bullet$};\node  at (0,-.25) {$\scriptstyle{0}$};
  \node  (x2) at (1,0) {$\bullet$};\node  at (1,-.25) {$\scriptstyle{c_0}$};
  \node  (x3) at (2,0) {$\bullet$};\node  at (2,-.25) {$\scriptstyle{c_1}$};
   \node  (x4) at (4,0) {$\bullet$};\node  at (4,-.25) {$\scriptstyle{c_\gamma}$};
\node(x5) at (6,0) {};
\node  (z1) at (1,1) {$\bullet$};\node  at (1,1.25) {$\scriptstyle{b_0}$};
  \node  (z2) at (2,1) {$\bullet$};\node  at (2,1.25) {$\scriptstyle{b_1}$};
  \node  (z3) at (3,1) {$\bullet$};\node  at (3,1.25) {$\scriptstyle{b_2}$};
  \node  (z4) at (5,1) {$\bullet$};\node  at (5,1.25) {$\scriptstyle{b_\gamma}$};
\node(z5) at (7,1) {};
  \node  (y) at (7.5,1) {$\bullet$};\node  at (7.5,1.25) {$\scriptstyle{1}$};
 \draw (x1) --(x2) -- (x3); 
   \draw (z1) --(z2) -- (z3); 
   \draw{(x1)--(z1)};
    \draw{(x2)--(z2)};
     \draw{(x3)--(z3)};
     \draw{(x4)--(z4)};
  \draw[dashed](x3) --(x5);
  \draw[dashed](z3) --(z5);
  \end{tikzpicture}
\caption{$\lambda$-ladder with top}
        \label{fig:kappa long ladder}
    \end{figure}
    \end{center} 
      Since $\lambda<\kappa$ and $\kappa$ is regular, $b_0\wedge\bigvee \{ c_\gamma : \gamma<\kappa \} = b_0 \wedge 1 = b_0$, but $\bigvee \{ b_0 \wedge c_\gamma : \gamma<\kappa \} = 0$. Thus, 
$Q_\lambda\notin\kJD$.
On the other hand, $Q_\lambda \in \lambda \alg{JD}$ 
since $\lambda$JD holds in the $(\gamma+1)$-ladder for each ordinal
$\gamma<\lambda$. Since $Q_\lambda$ is a complete lattice, we also have that $Q_\lambda\in \lambda\Frm \setminus \kFrm$.
That the inclusion $\kFrm \subseteq \kJD$ is proper can be seen by considering the linear sum $R_\lambda := \lambda\oplus\omega^{\mathrm{op}}$ for $\lambda<\kappa$. We simplify the notation by labelling the elements of the chain $\lambda\oplus\omega^{\mathrm{op}}$ as in \cref{fig:kappa long line} and read the order horizontally.
   
         \begin{center}
    \begin{figure}[!ht]
\begin{tikzpicture}
  
\node  (x1) at (0,0) {$\bullet$};\node  at (0,-.25) {$\scriptstyle{0}$};
  \node  (x2) at (1,0) {$\bullet$};\node  at (1,-.25) {$\scriptstyle{a_1}$};
  \node  (x3) at (2,0) {$\bullet$};\node  at (2,-.25) {$\scriptstyle{a_2}$};
\node(x5) at (4,0) {$\bullet$};\node  at (4,-.25) {$\scriptstyle{a_\omega}$};
\node(x6) at (5,0) {$\bullet$};\node  at (5,-.25) {$\scriptstyle{a_{\omega+1}}$};
\node (x7) at (7,0) {};
  \node  (y1) at (11,0) {$\bullet$};\node  at (11,-.25) {$\scriptstyle{1}$};
  \node  (y2) at (10,0) {$\bullet$};\node  at (10,-.25) {$\scriptstyle{b_1}$};
  \node  (y3) at (9,0) {$\bullet$};\node  at (9,-.25) {$\scriptstyle{b_2}$};
  \draw (x1) --(x2) -- (x3); 
  \draw{(x5)--(x6)};
   \draw (y1) --(y2) -- (y3); 
  \draw[dashed](x3) --(x5);
   \draw[dashed](x6)--(x7)--(y3);

\end{tikzpicture}
\caption{Linear sum $\lambda\oplus\omega^{\mathrm{op}}$ 
}
        \label{fig:kappa long line}
    \end{figure}
    \end{center} 

    \vspace{-5mm}  
\noindent Clearly $R_\lambda$ is JID, and hence $R_\lambda \in \kJD$. However, since $R_\lambda$ is not $\kappa$-complete, $R_\lambda\notin\kFrm$.

Finally, it is obvious that $\JID \subseteq \bigcap_\kappa \kJD$. For the reverse inclusion, let $A\in\bigcap_\kappa \kJD$. Then, in particular, $A$ is $\kappa$JD for $\kappa>|A|$, and hence $A\in\JID$. That $\Frm = \bigcap_\kappa \kFrm$ is proved similarly. 
\end{proof}

The next theorem provides a characterization of $\kFrm$ by utilizing the BL-tower of $A$.

 \begin{theorem} \label{thm: kappaJD}
    Let $A$ be a meet-semilattice and $\kappa$ a regular cardinal. Then
    $A\in\kFrm$ if and only if $\BL_\kappa A = A$.
\end{theorem}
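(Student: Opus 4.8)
The plan is to read the statement as the special case $B=A$ of \cref{sub kappa frame}, and to present its two implications separately.

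The implication ``$\BL_\kappa A = A \Rightarrow A\in\kFrm$'' is essentially immediate. Recall that $\BL_\kappa A$ is by definition the sub-$\kappa$-frame of $\BL A$ generated by $A$; so if $\BL_\kappa A = A$, then $A$ itself is a sub-$\kappa$-frame of $\BL A$, hence a $\kappa$-frame in its own right. Since $\kappa\ge\omega_0$, a $\kappa$-frame is in particular closed under finite joins and meets with the finite joins distributive, i.e.\ a bounded distributive lattice; thus $A\in\kFrm$. I would dispose of this direction first, in one or two sentences.

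For the converse, assume $A\in\kFrm$, i.e.\ $A$ is $\kappa$-complete and $\kappa$JD. Here I would reuse the mechanism already employed in \cref{sub kappa frame}: by \cite[Thm.~2]{bruns_injective_1970}, $\BL A$ is an essential extension of $A$, and essential extensions of meet-semilattices preserve distributive joins. Now let $S\subseteq A$ be any $\kappa$-set. Because $A$ is $\kappa$-complete, $\bigvee_A S$ exists; because $A$ is $\kappa$JD, this join is distributive in $A$; hence it is preserved by the inclusion $A\hookrightarrow\BL A$, so $\bigvee_{\BL A}S$ exists, coincides with $\bigvee_A S$, and therefore lies in $A$. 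As $A$ is also closed under finite meets (it is a sub-meet-semilattice of $\BL A$), it is a sub-$\kappa$-frame of $\BL A$, so $\BL_\kappa A\subseteq A$; the inclusion $A\subseteq\BL_\kappa A$ is automatic, giving $\BL_\kappa A = A$.

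The argument introduces no genuinely new obstacle: the one substantive ingredient, that essential extensions preserve distributive joins, is the Bruns--Lakser theorem, already invoked above. The only point that needs care in the converse is the logical order: one must first use $\kappa$-completeness of $A$ to know that $\bigvee_A S$ exists, and only then apply the preservation property to conclude that this element is also the join in $\BL A$ and so belongs to $A$; it is not enough to assume a $\kappa$-join exists in $\BL A$ and argue that it must land in $A$. In the write-up I would simply cite \cref{sub kappa frame} with $B=A$ for the bulk of the proof, adding only the observation that a $\kappa$-frame is automatically a bounded distributive lattice, which is what makes the assertion ``$A\in\kFrm$'' meaningful for a meet-semilattice $A$.
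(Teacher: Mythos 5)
Your proposal is correct and follows essentially the same route as the paper: the forward direction is immediate from the definition of $\BL_\kappa A$, and the converse uses the Bruns--Lakser result that the embedding $A\hookrightarrow\BL A$ preserves distributive joins to conclude that a $\kappa$-frame $A$ is already a sub-$\kappa$-frame of $\BL A$. Your added remarks (the careful ordering of ``the join exists in $A$, is distributive, hence is preserved'' and the observation that the statement is the $B=A$ case of \cref{sub kappa frame}) are accurate refinements of the same argument.
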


\begin{proof}
    Suppose $\BL_\kappa A = A$. Since $\BL_\kappa A$ is a $\kappa$-frame, so is $A$. 
    Conversely, suppose that $A$ is a $\kappa$-frame. By \cite[Cor.~2]{bruns_injective_1970}, the embedding $A \hookrightarrow \BL A$ preserves distributive joins. Therefore, since $A$ is a $\kappa$-frame, 
    it is a sub-$\kappa$-frame of $\BL A$, and hence $\BL_\kappa A = A$.
    \end{proof}

\begin{remark} 
Let $L$ be a frame and $A$ a sub-meet-semilattice that join-generates $L$.
We recall \cite[Defn.~4.3, 4.7]{bezhanishvili_semilattice_2024}
that $L$ is {\em $A$-extremal} (resp.~{\em $A$-basic}) if $A$ 
is a frame (resp.~a $\sigma$-frame). By \cite[Thm.~4.4, 4.8]{bezhanishvili_semilattice_2024}, $L$ is $A$-extremal if and only if $A = \BL A$ (resp.~$L$ is $A$-basic if and only if $A = \BL_{\omega_1} A$).
\cref{thm: kappaJD} is the $\kappa$-generalization of these notions: we could define a frame $L$ to be $A$-$\kappa$-extremal if $A$ is 
a $\kappa$-frame, that is $A = \BL_\kappa A$. (Note that for any meet-semilattice $A$, $\BL A$ is $A$-$\kappa$-extremal for $\kappa \geq \delta_{\BL}(A)$.) Moreover, the $\kappa$ version of $A$-coole (which translated in our terminology is defined to be $\BL_{\omega_1} A = \BL A$) would be that $\BL_{\kappa} A = \BL A$, and this should also be explored.
\end{remark}

\subsection{The BL-tower of the DM-completion: \texorpdfstring{$\kproH$}--hierarchy}
 In the previous theorem, we applied the BL-tower construction to $A$ to obtain a characterization of when $A$ is a $\kappa$-frame. We next apply the BL-tower construction to $\M A$, from which we obtain a characterization of degrees of distributivity of $\M A$. 
This is used to generalize a proHeyting lattice to a $\kappa$-proHeyting lattice, which gives rise to a hierarchy between $\proH$ and the class of those distributive lattices whose Dedekind-MacNeille completion is distributive.

\begin{theorem} \label{cor: kappaJD}
 For a meet-semilattice $A$ and regular cardinal $\kappa$, the following are equivalent: 
   \begin{enumerate}[label=\upshape(\arabic*), ref = \thetheorem(\arabic*)]
       \item $\M A \in \kJD$.
       \item $\M A \in \kFrm$.
       \item $\BL_\kappa (\M A) = \M A$.
   \end{enumerate}    
\end{theorem}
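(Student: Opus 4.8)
The plan is to derive this as a direct specialization of \cref{sub kappa frame} to $B=\M A$, using only one extra, classical, ingredient: the DM-completion is always a complete lattice.

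First I would record that $\M A$ is a complete lattice by \cref{defn: normalideal}, so every $\kappa$-join exists in $\M A$; hence $\M A$ is $\kappa$-complete for \emph{every} regular cardinal $\kappa$. Since a meet-semilattice is a $\kappa$-frame precisely when it is both $\kappa$-complete and $\kappa$JD (\cref{def: kappa JD}), $\kappa$-completeness of $\M A$ is automatic, so ``$\M A\in\kJD$'' and ``$\M A\in\kFrm$'' say the same thing. This disposes of (1)$\iff$(2).

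For (2)$\iff$(3) I would check that the hypotheses of \cref{sub kappa frame} are met with $B:=\M A$: we have $A\le\M A$ because $a\mapsto{\downarrow}a$ embeds $A$ as a sub-meet-semilattice of $\M A$, and $\M A\le\BL A$ because every normal ideal is a D-ideal (as noted before \cref{defn: ProH}). By \cref{conv: BL A=BL B} (justified by \cref{BL A = BL B}), we identify $\BL(\M A)$ with $\BL A$, so that $\BL_\kappa(\M A)$ is the sub-$\kappa$-frame of $\BL A$ generated by $\M A$, exactly matching clause (3) of \cref{sub kappa frame}. Now \cref{sub kappa frame} gives $\M A\in\kFrm$ if and only if $\BL_\kappa(\M A)=\M A$, i.e.\ (2)$\iff$(3). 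Chaining the two equivalences completes the argument.

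There is no real obstacle here: once one observes that completeness of $\M A$ trivializes the $\kappa$-completeness half of the $\kappa$-frame condition, the whole statement is just \cref{sub kappa frame} read off for $B=\M A$. The only points requiring (routine) care are verifying the chain $A\le\M A\le\BL A$ and keeping track of the identification $\BL(\M A)\cong\BL A$, so that the towers appearing in \cref{fig: Bl-towers} all live inside the same ambient frame $\BL A$.
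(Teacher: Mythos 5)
Your argument is correct and is essentially identical to the paper's own proof: completeness of $\M A$ makes (1)$\Leftrightarrow$(2) immediate, and (2)$\Leftrightarrow$(3) is \cref{sub kappa frame} applied to $B=\M A$ via the chain $A\le\M A\le\BL A$. The extra care you take in verifying that chain and the identification $\BL(\M A)=\BL A$ is sound but already covered by \cref{BL of DM 2} and \cref{conv: BL A=BL B}.
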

\begin{proof}
The equivalence of (1) and (2) is immediate since $\M A$ is a complete lattice, and the equivalence of (2) and (3) follows from \cref{sub kappa frame}.
\end{proof}

As we pointed out in \cref{char of proH}, $A$ is proHeyting if and only if $\BL(\M A)=\M A$. \cref{cor: kappaJD} affords the following generalization of proHeyting to $\kappa$-proHeyting. 

\begin{definition}\ \label{def: kproH}
    We say that a meet-semilattice $A$ is {\em $\kappa$-proHeyting} if $\BL_\kappa (\M A)= \M A$.
    \end{definition}

\begin{remark}\
\begin{enumerate}[label=\upshape(\arabic*), ref = \theremark(\arabic*)]
    \item Since the inclusion $\M A \subseteq \BL_\kappa (\M A)$ always holds, 
    $A$ is $\kappa$-proHeyting
    if and only if $\BL_\kappa (\M A) \subseteq \M A$. \label[remark]{rem: kproH}
    \item $A$ is proHeyting if and only if $A$ is $\kappa$-proHeyting for  $\kappa \geq \delta_{\BL}(\M A)$. 
\end{enumerate}
\end{remark}

\begin{definition}
    Let $\kproH$ be the subclass 
 of $\DLat$ consisting of $\kappa$-proHeyting lattices.
\end{definition}

We have
the following hierarchies: 
\begin{theorem}\label{ProH hierarchy}
\[\begin{tikzcd}[sep=small]
	\JID & {\bigcap_\kappa \kJD} && {\kJD} && \mathord{\omega_1 \alg{JD}} & \mathord{\omega_0\alg{JD}} & \DLat\\
	&&&&&& \\
	\proH & {\bigcap_\kappa \kproH} && {\kproH} && {\mathord{\omega_1\proH}} & {\mathord{\omega_0\proH}}
	\arrow[Rightarrow, no head, from=1-1, to=1-2]
	\arrow[dashed, hook, from=1-2, to=1-4]
	\arrow[dashed, hook, from=1-4, to=1-6]
	\arrow[hook, from=1-6, to=1-7]
	\arrow[Rightarrow, no head, from=1-7, to=1-8]
	\arrow[hook, from=3-1, to=1-1]
	\arrow[Rightarrow, no head, from=3-1, to=3-2]
	\arrow[hook, from=3-2, to=1-2]
	\arrow[dashed, hook, from=3-2, to=3-4]
	\arrow[hook, from=3-4, to=1-4]
	\arrow[dashed, hook, from=3-4, to=3-6]
	\arrow[hook, from=3-6, to=1-6]
	\arrow[hook, from=3-6, to=3-7]
        \arrow[hook, from=3-7, to=1-7]
\end{tikzcd}\]
\end{theorem}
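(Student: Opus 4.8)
The plan is to assemble the diagram from three parts: the top row together with the leftmost vertical arrow, both already available; the horizontal inclusions and the two endpoint identities in the bottom row, which follow from elementary monotonicity of the operators $A\mapsto\BL_\kappa(\M A)$; and the vertical inclusions $\kproH\subseteq\kJD$, which carry the only new content. The top row $\JID=\bigcap_\kappa\kJD\subseteq\cdots\subseteq\kJD\subseteq\cdots\subseteq\omega_1\alg{JD}\subseteq\omega_0\alg{JD}=\DLat$ is precisely \cref{prop: kappa JD}, and $\proH\subseteq\JID$ is \cref{cor: ProH implies JID}; neither needs anything further.

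For the bottom row, I would first observe that if $\lambda\le\kappa$ are regular cardinals then $\BL_\lambda(\M A)\subseteq\BL_\kappa(\M A)$ directly from the definition of the BL-tower, and since $\M A\subseteq\BL_\lambda(\M A)$ always holds, the equality $\BL_\kappa(\M A)=\M A$ forces $\BL_\lambda(\M A)=\M A$; hence $\kproH\subseteq\lambda\proH$, which yields all horizontal inclusions in the bottom row (and in particular $\omega_1\proH\subseteq\omega_0\proH$). For the endpoints: if $A$ is proHeyting then $\BL(\M A)=\M A$ by \cref{char of proH}, so $\M A\subseteq\BL_\kappa(\M A)\subseteq\BL(\M A)=\M A$ for every $\kappa$ and thus $A\in\bigcap_\kappa\kproH$; conversely, if $A\in\bigcap_\kappa\kproH$, then choosing a regular $\kappa\ge\delta_{\BL}(\M A)$ gives $\M A=\BL_\kappa(\M A)=\BL(\M A)$, so $A$ is proHeyting, again by \cref{char of proH}. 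Hence $\proH=\bigcap_\kappa\kproH$.

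The heart of the matter is the inclusion $\kproH\subseteq\kJD$, and it is enough to prove it for each single regular $\kappa$, since the $\bigcap$-column and the $\omega_0,\omega_1$-columns then follow by intersecting or instantiating. So let $A$ be $\kappa$-proHeyting; by \cref{cor: kappaJD} we have $\M A\in\kJD$. Now I would use that the canonical embedding $A\hookrightarrow\M A$ is an order-embedding preserving all existing joins and all meets. Given a $\kappa$-set $S\subseteq A$ whose join $\bigvee S$ exists in $A$ and an element $a\in A$: the join $\bigvee S$ computed in $\M A$ is the image of the one computed in $A$, so $a\wedge\bigvee S$ (which exists in $A$, being a binary meet) has the same image in $\M A$; since $\{a\wedge s:s\in S\}$ is again a $\kappa$-set and $\M A$ is $\kappa$JD, in $\M A$ we get $a\wedge\bigvee S=\bigvee\{a\wedge s:s\in S\}$. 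Finally, $a\wedge\bigvee S\in A$ is an upper bound in $A$ of $\{a\wedge s:s\in S\}$, and any other $A$-upper bound $u$ of that set is also an $\M A$-upper bound, whence $a\wedge\bigvee S\le u$ in $\M A$ and, since the embedding reflects the order, in $A$. Therefore $a\wedge\bigvee S=\bigvee\{a\wedge s:s\in S\}$ already in $A$, that is, $\bigvee S$ is a distributive join in $A$, and $A\in\kJD$. I expect this final ``reflection'' step to be the only genuine subtlety: the distributive law is immediate in $\M A$, but one has to check that the witnessing join actually lives in $A$.

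Finally, for strictness of the hierarchy: $\proH\subsetneq\JID$ is witnessed by \cref{ex: ladder plus N}, and $\omega_0\proH\subsetneq\DLat$ will follow from the Funayama-type \cref{example: Funayama}; for $\lambda<\kappa$, the lattice $Q_\lambda$ of \cref{prop: kappa JD} is complete, so $\M Q_\lambda=Q_\lambda$, and since $Q_\lambda\in\lambda\Frm\setminus\kFrm$, \cref{cor: kappaJD} gives $Q_\lambda\in\lambda\proH\setminus\kproH$, which separates the $\kappa$-levels of the bottom row.
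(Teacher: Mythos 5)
Your proof is correct and follows essentially the same route as the paper's: the top row and $\proH\subseteq\JID$ are quoted, the bottom-row inclusions and the identity $\proH=\bigcap_\kappa\kproH$ come from monotonicity of $\kappa\mapsto\BL_\kappa(\M A)$ together with the degree $\delta_{\BL}(\M A)$, the vertical inclusions come from \cref{cor: kappaJD}, and $Q_\lambda$ separates the levels of the bottom row. Two small remarks. First, the ``reflection'' step you isolate --- that a $\kappa$-join which exists in $A$ and is distributive in $\M A$ is already distributive in $A$ --- is exactly what the paper compresses into one sentence and later records as \cref{lem: dist joins in A and MA}; your explicit argument (the candidate join lies in $A$, is an upper bound there, and is below every $A$-upper bound because the embedding reflects order) is the right one. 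Second, the diagram also asserts that the intermediate vertical inclusions $\kproH\subseteq\kJD$ and $\omega_1\proH\subseteq\omega_1\alg{JD}$ are proper, and you only exhibit witnesses for the two outer columns ($\proH\subsetneq\JID$ and $\omega_0\proH\subsetneq\DLat$). The paper uses $P_\lambda=(\lambda\text{-ladder})\oplus\omega^{\mathrm{op}}$ with $\lambda<\kappa$ infinite for this; in fact the lattice $P$ of \cref{ex: ladder plus N}, which you already invoke, suffices for every $\kappa\ge\omega_1$, since $P\in\JID\subseteq\kJD$ while $\langle b_0,0\rangle$ is a countable join of principal ideals that is not normal, so $\BL_\kappa(\M P)\not\subseteq\M P$ and $P\notin\kproH$. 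This is a one-line addition rather than a flaw in the method.
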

\begin{proof}
The inclusions in the top row follow from \cref{prop: kappa JD}. To see the inclusions in the bottom row, 
for regular cardinals $\lambda\leq\kappa$, if $A\in\kproH$ then 
$$
\BL_\lambda(\M A)\subseteq \BL_\kappa(\M A) = \M A.
$$
Thus, $A\in\lambda \proH$, and hence $\kproH \subseteq \lambda \proH$. That this inclusion is proper when $\lambda<\kappa$ is witnessed by 
$Q_\lambda=(\lambda\mbox{-ladder})\oplus 1$ (see \cref{fig:kappa long ladder}). Since $Q_\lambda$ is complete, $Q_\lambda = \M Q_\lambda$. 
We have that $\langle b_0,0 \rangle = \bigcup_{\gamma<\lambda}{\downarrow}c_\gamma$ 
is not a normal ideal, so $\langle b_0,0 \rangle \notin \M Q_\lambda$. But it is a $\kappa$-join of principal ideals since $\lambda<\kappa$, so $\langle b_0,0 \rangle \in \BL_\kappa Q_\lambda = \BL_\kappa (\M Q_\lambda)$. Thus, $Q_\lambda\notin\kproH$. 
On the other hand, $Q_\lambda \in \lambda\proH$ by \cref{cor: kappaJD} since $\M Q_\lambda = Q_\lambda \in\lambda \alg{JD}$ (see the proof of \cref{prop: kappa JD}). 

We next prove that $\proH = \bigcap_\kappa \kproH$. First suppose $A\in\proH$. By \cref{char of proH}, $\BL(\M A) \subseteq \M A$. Therefore, $\BL_\kappa (\M A) \subseteq \M A$, and so $A\in\kproH$ for each $\kappa$. For the reverse inclusion, 
$\BL(\M A)=\BL_\kappa(\M A)$ for $\kappa \geq \delta_{\BL}(\M A)$.
Since $A\in\kproH$, $\BL_\kappa(\M A) = \M A$. Thus, $\BL(\M A)=\M A$, and hence 
$A\in\proH$ by \cref{char of proH}.
This finishes the proof of the inclusions in the bottom row. 

It remains to prove that $\kproH \subset \kJD$ for each $\kappa$. If $A\in\kproH$ then $\M A \in \kJD$ by \cref{cor: kappaJD}. Therefore, every $\kappa$-join in $\M A$ is distributive, implying that every existing $\kappa$-join in $A$ is distributive. Thus, $A\in\kJD$. To see that the inclusion is proper for $\kappa>\omega_0$, consider $P_\lambda = (\lambda\text{-ladder}) \oplus \omega^{\mathrm{op}}$ where $\lambda<\kappa$ is infinite. Since every existent nontrivial infinite  $\kappa$-join in $P_\lambda$ belongs to the $(\gamma+1)$-ladder for some ordinal $\gamma<\lambda$, 
we see that $P_\lambda\in\kJD$. But $\BL_\kappa (\M P_\lambda) \not\subseteq \M P_\lambda$ since $\langle b_0, 0\rangle$ belongs to the difference. Therefore, $P_\lambda\notin\kproH$. That $\omega_0\proH$ is properly contained in $\omega_0{\bf JD}$ follows from the existence of distributive lattices $A$ for which $\M A$ is not distributive (see also \cref{DMA remark 1}).
\end{proof}

As an immediate consequence of the above, we obtain: 
 
   \begin{corollary}\ 
    \begin{enumerate}[label=\upshape(\arabic*), ref = \thecorollary(\arabic*)]
        \item $\M A$ is distributive if and only if $A \in \omega_0\proH$. \label[corollary]{DMA distr}
        \item $\M A$ is a frame if and only if $A \in \kproH$ for any $\kappa \geq \delta_{BL}(\M A)$.
    \end{enumerate}
\end{corollary}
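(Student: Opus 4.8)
The plan is to read off both parts from the equivalences already established in this section, together with the elementary bookkeeping of BL-degrees; no new argument is required.

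For (1), recall from the remark following \cref{def: kappa JD} that a lattice is a bounded distributive lattice precisely when it is $\omega_0$JD. Since $\M A$ is always a complete lattice, it follows that $\M A$ is distributive if and only if $\M A\in\omega_0\alg{JD}$, and by \cref{cor: kappaJD} applied with $\kappa=\omega_0$ this holds if and only if $\BL_{\omega_0}(\M A)=\M A$. By \cref{def: kproH}, the latter is exactly the assertion $A\in\omega_0\proH$, proving (1).

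For (2), the point is that $\BL_\kappa(\M A)=\BL(\M A)$ as soon as $\kappa\geq\delta_{\BL}(\M A)$: the BL-tower of $\M A$ is increasing and stabilizes at $\delta_{\BL}(\M A)$ by definition of the BL-degree. Hence, for any such $\kappa$, the condition $A\in\kproH$ (that is, $\BL_\kappa(\M A)=\M A$) is nothing but $\BL(\M A)=\M A$, a condition independent of the choice of $\kappa\geq\delta_{\BL}(\M A)$; and by \cref{ProH hierarchy} it in addition forces $A\in\lambda\proH$ for every regular $\lambda\leq\kappa$, which is why the quantifier ``for any $\kappa$'' causes no trouble. Finally, by \cref{BL of DM 2} we have $\BL(\M A)=\BL A$, so $\BL(\M A)=\M A$ is the condition $\M A=\BL A$ occurring in \cref{thm: MA frame}, which by that proposition is equivalent to $\M A$ being a frame (equivalently, to $A$ being proHeyting). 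Chaining these equivalences gives (2).

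I do not anticipate a genuine obstacle: both statements drop out of the results of \cref{sec: hierarchies}. The only points demanding care are (a) stating the quantifier in (2) unambiguously --- the property holds for one, equivalently every, regular $\kappa\geq\delta_{\BL}(\M A)$ --- and (b) routing through the identity $\BL(\M A)=\BL A$ so that the ``frame'' clause of \cref{thm: MA frame} can be invoked in the form $\BL(\M A)=\M A$.
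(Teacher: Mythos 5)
Your proposal is correct and follows essentially the same route the paper intends: the paper presents this corollary as an immediate consequence of \cref{cor: kappaJD}, \cref{def: kproH}, the stabilization of the BL-tower of $\M A$ at $\delta_{\BL}(\M A)$, and the equivalence $\BL(\M A)=\M A \Leftrightarrow \M A$ is a frame from \cref{thm: MA frame} and \cref{BL of DM 2}. Your explicit handling of the ``for any $\kappa$'' quantifier matches the paper's own clarification in \cref{DMA remark}(3).
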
 

\begin{remark}\ \label{DMA remark}
    \begin{enumerate}[label=\upshape(\arabic*), ref = \theremark(\arabic*)]
    \item Since there exist distributive lattices $A$ such that $\M A$ is not distributive, $\omega_0\proH$ is a proper subclass of $\DLat$. \label[remark]{DMA remark 1}
    \item For $\kappa>\omega_0$, the class $\kproH$ is not comparable with $\kFrm$.  
    To see that $\kproH$ is not contained in $\kFrm$, consider the lattice $R_\lambda$ of  \cref{fig:kappa long line}, where $\lambda<\kappa$ is infinite. As we pointed out in the proof of \cref{prop: kappa JD}, 
    $R_\lambda \notin \kFrm$. On the other hand, since $R_\lambda$ is a Heyting lattice, $\M R_\lambda = \BL R_\lambda$, and hence $R_\lambda \in \kproH$. That $\kFrm$ is not contained in $\kproH$ follows from \cref{example: kappa Funayama}. \label[remark]{kFrm different kproH}
    \item Since $\lambda \le \kappa$ implies $\BL_\lambda (\M A) \subseteq \BL_\kappa (\M A)$, the statement that $A \in \kproH$ for any $\kappa \geq \delta_{\BL}(\M A)$ is equivalent to  $A \in \kproH$ for all $\kappa$. 
    \end{enumerate}
\end{remark}

In the introduction we saw that ${\bf C}\proH = \Frm$. We can now strengthen this by generalizing $\Frm$ to ${\bf C}\kFrm$.

\begin{definition}\label{def: CkFrm}
     Let $\CkFrm = \kFrm \cap \CDLat$. 
\end{definition}

In other words, $A\in\CkFrm$ if and only if $A$ is a complete lattice that is a $\kappa$-frame. 
Observe that $\prefixC{\omega_0\Frm} = {\CDLat}$, 
but $\kFrm$ is properly contained in $\CkFrm$ for $\kappa>\omega_0$ since there exist $\kappa$-frames that are not complete lattices (e.g., the linear sum $R_\kappa = \kappa\oplus\omega^{\mathrm{op}}$). 

\begin{proposition} \label{CproH} 
For each regular cardinal $\kappa$, ${\bf C}\kproH = {\bf C}\kFrm$.
\end{proposition}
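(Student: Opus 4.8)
The plan is to observe that both ${\bf C}\kproH$ and ${\bf C}\kFrm$ are, by definition, subclasses of $\CDLat$, and then to show that for a \emph{complete} distributive lattice the conditions ``$\kappa$-proHeyting'' and ``$\kappa$-frame'' coincide. This reduces the equality of classes to a pointwise statement that is essentially \cref{thm: kappaJD} specialized to complete lattices.

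First I would record the standard fact that a complete lattice is its own DM-completion. If $A$ is complete and $N$ is a normal ideal, then $N^u = {\uparrow}\bigvee N$, so $N = N^{u\ell} = {\downarrow}\bigvee N$ is principal; hence every normal ideal of $A$ is principal and $\M A \cong A$. Using \cref{conv: BL A=BL B} to identify $\BL(\M A)$ with $\BL A$, this isomorphism carries the BL-tower of $\M A$ onto that of $A$; in particular $\BL_\kappa(\M A) = \BL_\kappa A$.

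Given this, for $A\in\CDLat$ the defining condition of $\kappa$-proHeyting from \cref{def: kproH}, namely $\BL_\kappa(\M A) = \M A$, becomes $\BL_\kappa A = A$, which by \cref{thm: kappaJD} holds precisely when $A$ is a $\kappa$-frame. Since membership in ${\bf C}\kproH$ and in ${\bf C}\kFrm = \kFrm\cap\CDLat$ already presupposes completeness and distributivity, this yields ${\bf C}\kproH = {\bf C}\kFrm$. Alternatively, one can cite \cref{cor: kappaJD}, which already packages the equivalence of $\BL_\kappa(\M A)=\M A$ with $\M A\in\kFrm$; combined with $\M A\cong A$ this gives the conclusion at once.

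I do not expect a genuine obstacle here: the only point requiring a moment's care is the identification $\M A\cong A$ for complete $A$ together with the attendant identification of BL-towers, both of which are routine given the conventions already in place. The substance of the proposition is simply \cref{thm: kappaJD} in the complete case.
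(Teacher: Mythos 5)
Your argument is correct and is essentially the paper's own proof: both reduce to the observation that a complete lattice satisfies $\M A = A$, after which the defining condition $\BL_\kappa(\M A)=\M A$ becomes $\BL_\kappa A = A$, which is equivalent to $A\in\kFrm$ by \cref{thm: kappaJD}. The extra details you supply (why normal ideals of a complete lattice are principal, and the identification of BL-towers via \cref{conv: BL A=BL B}) are routine and consistent with what the paper takes for granted.
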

\begin{proof}
It is enough to prove that $A$ complete implies that $A\in\kproH$ if and only if $A\in\kFrm$. Let $A$ be complete. Then $\M A = A$. Therefore, by \cref{thm: kappaJD},
\[
A\in \kproH \Longleftrightarrow \BL_\kappa (\M A) = \M A \Longleftrightarrow \BL_\kappa A = A \Longleftrightarrow A\in\kFrm.
\]
\end{proof}

A similar proof to \cref{prop: kappa JD} delivers: 

\begin{theorem}
\[\begin{tikzcd}[sep=small]
	 & {\bigcap_\kappa \kFrm} && {\kFrm} && \mathord{\omega_1 \Frm} & \mathord{\omega_0\Frm} &\DLat \\
	\Frm&&&&&&&\\
	 & {\bigcap_\kappa \CkFrm } && {\CkFrm} && {{\bf C}\mathord{\omega_1}\Frm} & {{\bf C}\mathord{\omega_0}\Frm}&\CDLat
	\arrow[Rightarrow, no head, from=2-1, to=1-2]
        \arrow[Rightarrow, no head, from=2-1, to=3-2]
	\arrow[dashed, hook, from=1-2, to=1-4]
	\arrow[dashed, hook, from=1-4, to=1-6]
	\arrow[hook, from=1-6, to=1-7]
	\arrow[Rightarrow, no head, from=1-7, to=1-8]
        \arrow[Rightarrow, no head, from=3-7, to=3-8]
	\arrow[dashed, hook, from=3-2, to=3-4]
	\arrow[hook, from=3-4, to=1-4]
	\arrow[dashed, hook, from=3-4, to=3-6]
	\arrow[hook, from=3-6, to=1-6]
	\arrow[hook, from=3-6, to=3-7]
        \arrow[hook, from=3-7, to=1-7]
         \arrow[hook, from=3-8, to=1-8]
	\end{tikzcd}\]

\end{theorem}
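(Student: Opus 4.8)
The plan is to obtain the whole diagram from \cref{prop: kappa JD} by intersecting each class occurring there with $\CDLat$. First I would dispose of the endpoints and of the apex $\Frm$. We have $\omega_0\Frm = \DLat$ and $\prefixC{\omega_0\Frm} = \CDLat$ since finite joins in a bounded distributive lattice are automatically distributive; the equality $\Frm = \bigcap_\kappa \kFrm$ is part of \cref{prop: kappa JD}; and $\Frm = \bigcap_\kappa \CkFrm$ holds because a complete lattice that is a $\kappa$-frame for every regular $\kappa$ is, in particular, a $\kappa$-frame for some $\kappa > |A|$ and hence a frame, while conversely every frame is complete and a $\kappa$-frame for all $\kappa$. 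These identifications realize the ``no head'' arrows of the diagram and, through the apex, link the two intersection columns.

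Next I would establish the bottom row. For regular $\lambda \le \kappa$, being $\kappa$-complete (resp.\ $\kappa$JD) implies being $\lambda$-complete (resp.\ $\lambda$JD), so $\CkFrm \subseteq \prefixC{\lambda\Frm}$, and consequently $\bigcap_\kappa \CkFrm \subseteq \CkFrm \subseteq \prefixC{\omega_1\Frm} \subseteq \prefixC{\omega_0\Frm} = \CDLat$. To see these inclusions are proper I would recycle the witnesses built for \cref{prop: kappa JD}, observing that they happen to be complete lattices: for $\lambda < \kappa$ the lattice $Q_\lambda = (\lambda\text{-ladder}) \oplus 1$ is complete, so the argument there that $Q_\lambda \in \lambda\Frm \setminus \kFrm$ in fact gives $Q_\lambda \in \prefixC{\lambda\Frm} \setminus \CkFrm$; specializing $\lambda = \omega_0, \kappa = \omega_1$ (resp.\ $\lambda = \omega_1$, $\kappa$ larger) then separates $\prefixC{\omega_1\Frm}$ from $\CDLat$ (resp.\ $\CkFrm$ from $\prefixC{\omega_1\Frm}$), while $Q_\kappa$ itself is a complete $\kappa$-frame that is not JID, hence lies in $\CkFrm$ but not in $\Frm = \bigcap_\mu \prefixC{\mu\Frm}$, separating $\bigcap_\kappa \CkFrm$ from $\CkFrm$. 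For the vertical arrows $\CkFrm \hookrightarrow \kFrm$, $\prefixC{\omega_1\Frm} \hookrightarrow \omega_1\Frm$, $\prefixC{\omega_0\Frm} \hookrightarrow \omega_0\Frm$, and $\CDLat \hookrightarrow \DLat$, the inclusion itself is immediate from \cref{def: CkFrm} (forget completeness); for properness I would use the chain $R_\kappa = \kappa \oplus \omega^{\mathrm{op}}$, which is a $\kappa$-frame — it is a chain, hence JID, hence $\kappa$JD, and it is $\kappa$-complete because any subset of size $<\kappa$ of the initial segment $\kappa$ has its supremum back in $\kappa$ by regularity, while any subset meeting the top copy of $\omega^{\mathrm{op}}$ has a largest element there — yet it is not a complete lattice, since the full join $\bigvee \kappa$ has no least upper bound. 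Thus $R_\kappa \in \kappa\Frm \setminus \prefixC{\kappa\Frm}$ for every regular $\kappa \ge \omega_0$; for $\kappa = \omega_0$ this is merely the remark that $R_{\omega_0} = \omega \oplus \omega^{\mathrm{op}}$ is a bounded distributive lattice that fails to be complete.

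Finally, the top row of the diagram is exactly the bottom row of \cref{prop: kappa JD} (extended by $\omega_0\Frm = \DLat$), so no further work is needed there. I do not anticipate a genuine obstacle: the entire argument is a line-by-line reprise of \cref{prop: kappa JD} with ``$\,\cap\,\CDLat$'' applied throughout, and the only input not already present in that proof is the fact — signalled just before the statement — that a $\kappa$-frame need not be a complete lattice, realized by $R_\kappa$. The one thing requiring care is bookkeeping: matching each of the many arrows in the diagram to its inclusion or equality and attaching the correct separating example.
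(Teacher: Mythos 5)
Your proposal is correct and matches the paper's intent: the paper's entire proof is the remark that ``a similar proof to \cref{prop: kappa JD} delivers'' the diagram, and your write-up is precisely that argument carried out, reusing the complete lattices $Q_\lambda$ for properness along the rows and the chains $R_\kappa=\kappa\oplus\omega^{\mathrm{op}}$ (already flagged in the paper just before the statement) for properness of the vertical inclusions $\CkFrm\subsetneq\kFrm$. All the verifications you sketch ($Q_\lambda$ complete, $R_\kappa$ a non-complete $\kappa$-frame by regularity, $\Frm=\bigcap_\kappa\CkFrm$) are sound.
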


We have the following characterization of when $A\in\CkFrm$:

\begin{theorem} \label{thm: CkFrm char}
    $A\in \CkFrm$ if and only if $\BL_\kappa(\M A) = A$.
\end{theorem}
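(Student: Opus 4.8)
The statement to prove is: $A \in \CkFrm$ if and only if $\BL_\kappa(\M A) = A$. Recall $\CkFrm = \kFrm \cap \CDLat$, so $A \in \CkFrm$ means $A$ is a complete lattice that is also a $\kappa$-frame. The plan is to combine the characterization of $\kproH$ in terms of $\M A$ (Theorem \ref{cor: kappaJD}) with the fact that a complete lattice is self-completing under $\M$.

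For the forward direction, suppose $A \in \CkFrm$. Then $A$ is complete, so $\M A \cong A$ (a complete lattice equals its own DM-completion). Since $A$ is a $\kappa$-frame, by \cref{thm: kappaJD} we have $\BL_\kappa A = A$. Substituting $\M A$ for $A$ (using $\M A \cong A$) gives $\BL_\kappa(\M A) = \M A = A$, as desired.

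For the reverse direction, suppose $\BL_\kappa(\M A) = A$. The key observation is that $\M A \subseteq \BL_\kappa(\M A)$ always holds, and since $\BL_\kappa(\M A)$ is by construction a sub-$\kappa$-frame of $\BL A$, it is in particular a meet-semilattice sitting between $\M A$ and $\BL A$ (i.e.\ $\M A \le \BL_\kappa(\M A) \le \BL(\M A)$, using $\BL(\M A) = \BL A$ from \cref{BL of DM 2}). From $\BL_\kappa(\M A) = A$ we get $A = \M A$; hence $A$ is complete, so $A \in \CDLat$. It remains to see $A \in \kFrm$: since $\BL_\kappa(\M A) = \M A$, \cref{cor: kappaJD} gives $\M A \in \kFrm$, and as $A = \M A$ we conclude $A \in \kFrm$. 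Therefore $A \in \kFrm \cap \CDLat = \CkFrm$.

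The main (and only) subtlety is keeping straight the identification $\M A = A$ for complete $A$ and verifying that $A = \BL_\kappa(\M A)$ forces $A = \M A$; once that collapse is in place, everything reduces to \cref{thm: kappaJD} and \cref{cor: kappaJD}. No genuinely hard estimate is involved — the proof is essentially a bookkeeping argument chaining together the two earlier characterizations.

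\begin{proof}
Suppose first that $A \in \CkFrm$, i.e.\ $A$ is a complete lattice that is a $\kappa$-frame. Since $A$ is complete, $\M A = A$. Because $A$ is a $\kappa$-frame, \cref{thm: kappaJD} yields $\BL_\kappa A = A$. Therefore $\BL_\kappa(\M A) = \BL_\kappa A = A$.

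Conversely, suppose $\BL_\kappa(\M A) = A$. Since $\M A \subseteq \BL_\kappa(\M A)$ always holds, we get $\M A \subseteq A$; combined with $A \subseteq \M A$ (under \cref{conv}) this gives $A = \M A$. In particular $A$ is a complete lattice, so $A \in \CDLat$. Moreover, from $\BL_\kappa(\M A) = \M A$ and \cref{cor: kappaJD} we obtain $\M A \in \kFrm$, and since $A = \M A$ this means $A \in \kFrm$. Hence $A \in \kFrm \cap \CDLat = \CkFrm$.
\end{proof}
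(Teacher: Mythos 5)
Your proof is correct and follows essentially the same route as the paper: complete $\Rightarrow$ $\M A = A$ plus \cref{thm: kappaJD} for the forward direction, and $\M A \subseteq \BL_\kappa(\M A) = A$ for completeness in the converse. The only cosmetic difference is that the paper concludes $A\in\kFrm$ directly from the fact that $\BL_\kappa(\M A)$ is a $\kappa$-frame by construction, while you route this through \cref{cor: kappaJD}; both are valid.
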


\begin{proof}
    Suppose $A\in \CkFrm$. Since $A$ is complete, $A=\M A$. Therefore, since $A$ is a $\kappa$-frame, $\BL_\kappa(\M A)=\BL_\kappa A=A$ by \cref{thm: kappaJD}. 
    Conversely, suppose ${\BL_\kappa(\M A) = A}$. Then $\M A \subseteq A$, so $A$ is complete. Moreover, since $\BL_\kappa(\M A)$ is a $\kappa$-frame, $A$ is a $\kappa$-frame. Thus, $A\in \CkFrm$. 
\end{proof}

\subsection{The BL-tower of the proHeyting extension: \texorpdfstring{$\kappa \CHeyt$}--hierarchy}

We now show how to obtain some other hierarchies of interest by applying the BL-tower construction to $\pH A$. We start with the following, which is an immediate consequence of \cref{sub kappa frame}: 

\begin{proposition} \label{prop: pHA kFrm}
For $A$ a meet-semilattice and $\kappa$ a regular cardinal,
$\BL_\kappa(\pH A) = \pH A$ if and only if $\pH A\in \kFrm$.
\end{proposition}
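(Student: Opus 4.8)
The plan is to apply \cref{sub kappa frame} directly, taking $B = \pH A$. First I would invoke \cref{BL of DM 3}, which asserts $\BL(\pH A) \cong \BL A$, together with the observation that $A \le \pH A$ (by construction, since $\pH A$ is the bounded sublattice of $\BL A$ generated by $\R A$, which contains the principal downsets identified with $A$ via \cref{conv}). This gives the chain of inclusions $A \le \pH A \le \BL A$, with $\BL(\pH A)$ identified with $\BL A$ as in \cref{conv: BL A=BL B}. Consequently $\pH A$ sits inside its own Bruns--Lakser completion as a sub-meet-semilattice, and the BL-tower of $\pH A$ lives inside $\BL A$.

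Now I would apply \cref{sub kappa frame} with the roles $A \leftarrow A$ and $B \leftarrow \pH A$: that theorem states that for $A \le B \le \BL A$, the conditions ``$B$ is a $\kappa$-frame'', ``$B$ is a sub-$\kappa$-frame of $\BL A$'', and ``$\BL_\kappa(B) = B$'' are all equivalent. In particular, the equivalence of the first and third conditions yields exactly the desired statement: $\pH A \in \kFrm$ if and only if $\BL_\kappa(\pH A) = \pH A$. Since the proposition is, as the text says, ``an immediate consequence of \cref{sub kappa frame},'' the proof is essentially this single application once the hypotheses $A \le \pH A \le \BL A$ are in place.

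There is no real obstacle here; the only point requiring a word of care is confirming that $\pH A$ genuinely falls under the hypotheses of \cref{sub kappa frame}, i.e.\ that $A \le \pH A \le \BL A$ as sub-meet-semilattices of $\BL A$. The lower inclusion $A \le \pH A$ holds because $\R A \subseteq \pH A$ and $A$ embeds into $\R A$ via $a \maps{\downarrow}a = \langle 1, a\rangle$ (as noted just before \cref{conv}), and this embedding is a meet-semilattice embedding since ${\downarrow}a \cap {\downarrow}b = {\downarrow}(a\wedge b)$. The upper inclusion $\pH A \le \BL A$ is immediate from the definition of $\pH A$ as a sublattice of $\BL A$. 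With these in hand, \cref{sub kappa frame} applies verbatim.

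\begin{proof}
By \cref{def: proH ext}, $\pH A$ is a bounded sublattice of $\BL A$, and since $A$ embeds into $\R A \subseteq \pH A$ as a sub-meet-semilattice (see the discussion preceding \cref{conv}), we have $A \le \pH A \le \BL A$. Thus \cref{sub kappa frame} applies with $B = \pH A$, and the equivalence of its conditions (1) and (3) gives that $\pH A$ is a $\kappa$-frame if and only if $\BL_\kappa(\pH A) = \pH A$.
\end{proof}
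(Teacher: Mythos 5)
Your proposal is correct and matches the paper exactly: the paper presents this proposition as an immediate consequence of \cref{sub kappa frame} applied with $B = \pH A$, which is precisely your argument. Your extra care in verifying $A \le \pH A \le \BL A$ is sound and fills in the (routine) detail the paper leaves implicit.
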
 

While the above proposition does not yield a new hierarchy, one can be obtained by studying 
when $\BL_\kappa(\pH A) = A$. As we will see, this condition characterizes the degree of completeness of Heyting lattices. 

\begin{definition}\label{def: kappaCH}
Let $\kappa \CHeyt$ be the class of $\kappa$-complete Heyting lattices. 
\end{definition}
\begin{proposition} \label{prop: char of kCH}
    $\kappa\CHeyt = \Heyt \cap \kFrm$.
\end{proposition}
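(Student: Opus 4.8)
The plan is to prove the two inclusions separately, using the characterization $\kFrm \Leftrightarrow \BL_\kappa = \text{identity}$ from \cref{thm: kappaJD} together with the basic fact from \cref{prop: pH A 2} that $A = \pH A$ if and only if $A$ is a Heyting lattice.

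First I would show $\kappa\CHeyt \subseteq \Heyt \cap \kFrm$. A $\kappa$-complete Heyting lattice $A$ is of course Heyting, so it remains to see that $A \in \kFrm$, i.e.\ that $A$ is $\kappa$JD. But a Heyting lattice satisfies the arrow law, and a standard adjunction argument shows that $a \wedge (-)$ is a left adjoint, hence preserves all existing joins; in particular it preserves $\kappa$-joins. Combined with $\kappa$-completeness, this gives $A \in \kFrm$.

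For the reverse inclusion $\Heyt \cap \kFrm \subseteq \kappa\CHeyt$, let $A$ be a Heyting lattice that is also a $\kappa$-frame. Since $A$ is a $\kappa$-frame, it is in particular $\kappa$-complete, so I only need $A \in \Heyt$, which is given. Thus the content is essentially that ``Heyting $+$ $\kappa$-frame'' literally unpacks to ``Heyting $+$ $\kappa$-complete'', the $\kappa$JD part being automatic for Heyting lattices as noted above. One can phrase this cleanly: by \cref{thm: kappaJD}, $A \in \kFrm$ iff $\BL_\kappa A = A$; and since $A$ is Heyting, $\pH A = A$ by \cref{prop: pH A 2}, so $\BL_\kappa(\pH A) = \BL_\kappa A = A$, which already records the link to the BL-tower of $\pH A$ that the surrounding text is aiming at.

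I do not expect a serious obstacle here; the only point requiring a little care is the verification that in a Heyting lattice $a \wedge (-)$ preserves arbitrary existing joins (so that $\kappa$-completeness upgrades to $\kappa$JD), but this is the familiar fact that a right adjoint's partner preserves colimits — concretely, if $\bigvee S$ exists then for any upper bound $u$ of $\{a\wedge s : s\in S\}$ one has $s \le a \to u$ for all $s$, hence $\bigvee S \le a\to u$, hence $a\wedge\bigvee S \le u$, giving $a\wedge\bigvee S = \bigvee(a\wedge S)$. I would state this inline rather than belabor it.
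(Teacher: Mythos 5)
Your proof is correct and follows essentially the same route as the paper: the nontrivial direction is showing a $\kappa$-complete Heyting lattice is $\kappa$JD, and your adjunction computation ($s \le a \to u$ for all $s$, hence $\bigvee S \le a \to u$, hence $a \wedge \bigvee S \le u$) is exactly the paper's argument. The extra remarks about $\BL_\kappa$ and $\pH A$ are harmless but not needed.
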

\begin{proof}
    Clearly $\Heyt \cap \kFrm \subseteq \kappa \CHeyt$. For the reverse inclusion, it is enough to show that each $A\in \kappa\CHeyt$ is a $\kappa$-frame; that is, every $\kappa$-set $S\subseteq A$ is admissible. Let $a\in A$ and $x$ be an upper bound of $a\wedge S$. Then $a\wedge s\le x$ for each $s\in S$. Since $A$ is a Heyting lattice, $s\le a\to x$ for each $s\in S$, so $\bigvee S\le a\to x$, and hence $a\wedge\bigvee S \le x$. Thus, $S$ is admissible. 
\end{proof}

\begin{theorem} \label{prop: kCH hierarchy}

\[\begin{tikzcd}[sep=small]
	\Frm & {\bigcap_\kappa \kFrm} && {\kFrm} && \mathord{\omega_1 \Frm} & \mathord{\omega_0\Frm} &\DLat \\
	&&&&&&&\\
	\CHeyt & {\bigcap_\kappa \CHeyt} && {\kappa\CHeyt} && {\mathord{\omega_1\CHeyt}} & {\mathord{\omega_0\CHeyt}}&\Heyt
	\arrow[Rightarrow, no head, from=1-1, to=1-2]
	\arrow[dashed, hook, from=1-2, to=1-4]
	\arrow[dashed, hook, from=1-4, to=1-6]
	\arrow[hook, from=1-6, to=1-7]
	\arrow[Rightarrow, no head, from=1-7, to=1-8]
        \arrow[Rightarrow, no head, from=3-7, to=3-8]
	\arrow[Rightarrow, no head, from=3-1, to=3-2]
	\arrow[dashed, hook, from=3-2, to=3-4]
	\arrow[hook, from=3-4, to=1-4]
	\arrow[dashed, hook, from=3-4, to=3-6]
	\arrow[hook, from=3-6, to=1-6]
	\arrow[hook, from=3-6, to=3-7]
        \arrow[hook, from=3-7, to=1-7]
         \arrow[hook, from=3-8, to=1-8]
	\arrow[Rightarrow, no head, from=3-1, to=1-1]
 \arrow[Rightarrow, no head, from=3-2, to=1-2]
\end{tikzcd}\]

\end{theorem}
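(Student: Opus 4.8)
The plan is to mimic the structure of the proof of \cref{prop: kappa JD}, adapting each piece to the Heyting/$\kappa$-complete setting and using \cref{prop: char of kCH} ($\kappa\CHeyt = \Heyt \cap \kFrm$) as the main bridge. First I would observe that the top row is exactly the top row of the hierarchy in \cref{prop: kappa JD} (it is literally the $\Frm$-hierarchy sitting over $\DLat$), so those inclusions and their strictness are already established there; the work is entirely in the bottom row and the vertical maps. For the bottom row, the inclusion $\lambda\CHeyt \supseteq \kappa\CHeyt$ for $\lambda \le \kappa$ is immediate since a $\kappa$-complete Heyting lattice is $\lambda$-complete, and $\omega_0\CHeyt = \Heyt$ because every Heyting lattice is finitely complete. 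The equalities at the bottom left, $\CHeyt = \bigcap_\kappa \CHeyt = \bigcap_\kappa \kappa\CHeyt$, follow the same pattern as in \cref{prop: kappa JD}: if $A \in \kappa\CHeyt$ for $\kappa > |A|$ then $A$ is complete, hence $A \in \CHeyt$; and of course $\CHeyt \subseteq \kappa\CHeyt$ for every $\kappa$.

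Next I would handle the vertical inclusions $\kappa\CHeyt \hookrightarrow \kFrm$ (and the degenerate ones over the ends): this is exactly \cref{prop: char of kCH}, which gives $\kappa\CHeyt = \Heyt \cap \kFrm \subseteq \kFrm$, together with $\omega_0\CHeyt = \Heyt \hookrightarrow \DLat = \omega_0\Frm$ and $\CHeyt = \Frm$ (noted in the introduction). The strictness of the horizontal inclusions in the bottom row — that $\kappa\CHeyt \subsetneq \lambda\CHeyt$ for $\lambda < \kappa$ — I would witness with the same examples used in \cref{prop: kappa JD}, namely $Q_\lambda = (\lambda\text{-ladder}) \oplus 1$: each $Q_\lambda$ is a complete lattice, it is a Heyting lattice (it is a complete distributive lattice in which one checks the arrow exists, or simply because $\lambda$JD holds in each $(\gamma+1)$-ladder and the lattice is complete and distributive, so JID holds, so it is a frame, hence Heyting), it lies in $\lambda\CHeyt$ but not in $\kappa\CHeyt$ since, as computed there, $b_0 \wedge \bigvee\{c_\gamma : \gamma < \kappa\} \ne \bigvee\{b_0 \wedge c_\gamma : \gamma < \kappa\}$ shows $Q_\lambda \notin \kFrm \supseteq \kappa\CHeyt$. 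One small point to verify: $Q_\lambda$ really is a Heyting lattice; I expect this to be routine — in a complete chain-like ladder with top, relative pseudocomplements are computed levelwise and always exist.

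The main obstacle, and the part deserving the most care, is establishing that the vertical maps $\kappa\CHeyt \hookrightarrow \kFrm$ are \emph{proper} for $\kappa > \omega_0$ — i.e.\ exhibiting $\kappa$-frames that are not Heyting lattices — and confirming the degenerate cases at the two ends behave as drawn ($\CHeyt = \Frm$ at the far left, $\Heyt \subsetneq \DLat$ at the far right). For the far right, $\Heyt \subsetneq \DLat$ is classical (a bounded distributive lattice need not be relatively pseudocomplemented, e.g.\ an infinite chain-based example or the non-Heyting distributive lattice standard in the literature). For the properness of $\kappa\CHeyt \subsetneq \kFrm$: I would take a $\kappa$-frame that fails to be a Heyting lattice because some relative annihilator $\langle a,b\rangle$ is not principal while all the relevant $\kappa$-joins that could witness non-$\kappa$-distributivity do exist and are distributive — for instance a suitably truncated ladder-type example, or $R_\lambda = \lambda \oplus \omega^{\mathrm{op}}$ is itself Heyting so that will not do, but a lattice built to have a non-principal $\langle a,b\rangle$ all of whose defect is "above $\kappa$" will. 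I expect the cleanest choice is an example paralleling those in \cref{ProH hierarchy} (such as $P_\lambda$ or a variant), verifying (i) it is $\kappa$-complete and $\kappa$JD, hence in $\kFrm$, and (ii) some relative pseudocomplement fails to exist, hence it is not in $\Heyt$, so not in $\kappa\CHeyt$ by \cref{prop: char of kCH}. The remaining equalities $\Frm = \bigcap_\kappa \kFrm$ and the top-row strictness are inherited verbatim from \cref{prop: kappa JD}, so I would simply cite it.
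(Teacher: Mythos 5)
There is a genuine error in the choice of witness for the strictness of the bottom-row inclusions. You propose $Q_\lambda = (\lambda\text{-ladder})\oplus 1$ and assert it is a Heyting lattice, justifying this by "$\lambda$JD holds in each $(\gamma+1)$-ladder and the lattice is complete and distributive, so JID holds, so it is a frame, hence Heyting." This contradicts the computation already carried out in the proof of \cref{prop: kappa JD}: in $Q_\lambda$ one has $b_0 \wedge \bigvee\{c_\gamma : \gamma<\lambda\} = b_0\wedge 1 = b_0$ while $\bigvee\{b_0\wedge c_\gamma : \gamma<\lambda\} = 0$, so the (existing) join $\bigvee\{c_\gamma\}$ is not distributive, $Q_\lambda$ is not JID, hence not a frame and, being complete, not Heyting either. (Equivalently, $\langle b_0,0\rangle = \bigcup_\gamma{\downarrow}c_\gamma$ is not principal, so $b_0\to 0$ does not exist.) Thus $Q_\lambda\notin\lambda\CHeyt$ and it cannot witness $\kappa\CHeyt\subsetneq\lambda\CHeyt$. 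The paper instead uses the chain $R_\lambda=\lambda\oplus\omega^{\mathrm{op}}$ of \cref{fig:kappa long line}: a bounded chain is always Heyting, $R_\lambda$ is $\lambda$-complete by regularity of $\lambda$, and it is not $\kappa$-complete (the $\kappa$-set $\{a_\gamma:\gamma<\lambda\}$ has no join), so $R_\lambda\in\lambda\CHeyt\setminus\kappa\CHeyt$. Ironically, you mention $R_\lambda$ only to dismiss it, in a context where it indeed does not apply.

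The second weak point is the properness of the vertical inclusion $\kappa\CHeyt\hookrightarrow\kFrm$, which you leave unresolved ("a suitably truncated ladder-type example, or $P_\lambda$ or a variant"). The candidate $P_\lambda=(\lambda\text{-ladder})\oplus\omega^{\mathrm{op}}$ fails: the $c_\gamma$'s have no join there, so $P_\lambda$ is not $\kappa$-complete and hence not in $\kFrm$. The correct witness is $Q_\kappa$ itself (note the index $\kappa$, not $\lambda<\kappa$): it is complete and $\kappa$JD because the only problematic join, $\bigvee\{c_\gamma:\gamma<\kappa\}$, is indexed by a set of cardinality $\kappa$ and hence is not a $\kappa$-join, so $Q_\kappa\in\kFrm$; yet it is not Heyting (not even proHeyting) by the same relative-annihilator computation as above. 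The rest of your outline (top row via \cref{prop: kappa JD}, the antitone inclusions, $\omega_0\CHeyt=\Heyt$, $\CHeyt=\bigcap_\kappa\kappa\CHeyt$, and the vertical inclusions via \cref{prop: char of kCH}) matches the paper and is fine.
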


\begin{proof}
    The proof is similar to previous proofs, so we only sketch it. 
    It is obvious that $\omega_0\CHeyt = \Heyt$, that $\lambda\le\kappa$ implies $\kappa \CHeyt \subseteq \lambda \CHeyt$, and that ${\CHeyt = \bigcap_\kappa\kappa \CHeyt}$. That the inclusion $\kappa \CHeyt \subseteq \lambda\CHeyt$ is proper for $\lambda<\kappa$ can be seen by considering the linear sum $R_\lambda = \lambda\oplus\omega^{\mathrm{op}}$ of 
   \cref{fig:kappa long line}. 
The chain is $\lambda$-complete, but not $\kappa$-complete since $\lambda<\kappa$, and because it is a Heyting lattice, it witnesses that the  inclusion is proper.
    Finally, $\kappa\CHeyt$ is properly contained in $\kFrm$. Indeed, the lattice $Q_\kappa = \left(\kappa\mbox{-ladder} 
    \right) \oplus 1$ is not a Heyting lattice (not even a proHeyting lattice), so it does not belong to $\kappa \CHeyt$, but $Q_\kappa\in\kFrm$.
\end{proof}

We have the following characterization of when $A\in\kappa \CHeyt$:

\begin{theorem} \label{thm: kCH}
    $A\in \kappa\CHeyt$ if and only if $\BL_\kappa(\pH A) = A$.
\end{theorem}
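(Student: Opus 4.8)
The plan is to reduce this to the characterization of $\kappa$-complete Heyting lattices already established. Recall that by \cref{prop: char of kCH}, $A \in \kappa\CHeyt$ is equivalent to $A \in \Heyt \cap \kFrm$. So the goal becomes: $A$ is a Heyting lattice that is a $\kappa$-frame if and only if $\BL_\kappa(\pH A) = A$.

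First I would prove the forward direction. Suppose $A \in \kappa\CHeyt$, so $A \in \Heyt \cap \kFrm$. By \cref{prop: pH A 2}, $A$ being a Heyting lattice gives $\pH A = A$. Therefore $\BL_\kappa(\pH A) = \BL_\kappa A$, and since $A \in \kFrm$, \cref{thm: kappaJD} yields $\BL_\kappa A = A$. Chaining these equalities gives $\BL_\kappa(\pH A) = A$, as desired.

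Next I would handle the converse. Assume $\BL_\kappa(\pH A) = A$. Using \cref{BL of DM 3} together with \cref{conv: BL A=BL B}, we have $\BL(\pH A) = \BL A$, and the chain $A \subseteq \pH A \subseteq \BL_\kappa(\pH A) \subseteq \BL(\pH A) = \BL A$ holds. The hypothesis $\BL_\kappa(\pH A) = A$ forces $\pH A \subseteq A$, hence $\pH A = A$ (the reverse inclusion being the standing convention), so by \cref{prop: pH A 2} $A$ is a Heyting lattice. Moreover $\BL_\kappa(\pH A) = \BL_\kappa A = A$, and since $\BL_\kappa A$ is by construction a $\kappa$-frame, $A \in \kFrm$. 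Thus $A \in \Heyt \cap \kFrm = \kappa\CHeyt$ by \cref{prop: char of kCH}.

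I do not anticipate a serious obstacle here, as the statement is essentially a bookkeeping consequence of \cref{prop: pH A 2}, \cref{thm: kappaJD}, and \cref{prop: char of kCH}. The one point requiring a little care is the internal consistency of the identifications: one must check that $\BL_\kappa(\pH A)$, computed inside $\BL(\pH A)$, is being compared with $A$ inside the same ambient frame $\BL A$, which is precisely what \cref{conv: BL A=BL B} licenses once we invoke \cref{BL of DM 3}. With that identification in place, the argument is a short two-way implication.
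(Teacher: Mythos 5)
Your proof is correct and follows essentially the same route as the paper: the forward direction via $\pH A = A$ (\cref{prop: pH A 2}) and $\BL_\kappa A = A$ (\cref{thm: kappaJD}), and the converse by extracting $\pH A \subseteq A$ and the $\kappa$-frame property from the hypothesis. The extra care you take with the ambient identifications via \cref{conv: BL A=BL B} is sound but not needed beyond what the paper already assumes.
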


\begin{proof}
    First suppose $A\in \kappa\CHeyt$. Since $A$ is a Heyting lattice, $A=\pH A$ by \cref{prop: pH A 2}. Therefore, since $A$ is a $\kappa$-frame by \cref{prop: char of kCH}, $\BL_\kappa(\pH A)=\BL_\kappa(A)=A$ by \cref{thm: kappaJD}. Conversely, suppose $\BL_\kappa(\pH A) = A$. Then $\pH A\subseteq A$, so $A\in\Heyt$ by \cref{prop: pH A 2} (since the other inclusion always holds under \cref{conv}). 
     Moreover, since $\BL_\kappa(\pH A)$ is a $\kappa$-frame, $A$ is a $\kappa$-frame. Thus, $A\in \kappa\CHeyt$.
\end{proof}

Before turning to topological duals, 
we develop two more hierarchies, this time from $\Heyt$ to $\DLat$ and $\proH$, respectively. 
Recall that $A\in\DLat$ is a Heyting lattice iff each relative annihilator of $A$ is principal, that is $\R A  \subseteq \BL_{\omega_0} A$ (since $A=\BL_{\omega_0} A$). 
We thus define the hierarchy between $\Heyt$ and $\DLat$ by asking that each relative annihilator of $A$ is in the $\kappa$-portion of the BL-tower of $A$. To obtain a similar hierarchy between $\Heyt$ and $\proH$, we additionally ask that $\R A \subseteq \M A$. 

\begin{definition}\label{def: kappa H}
    We say that 
    \begin{enumerate}[label=\upshape(\arabic*), ref = \thedefinition(\arabic*)]
     \item    $A \in \kHeyt$ provided $\R A \subseteq \BL_\kappa A $, that is each relative annihilator  belongs to $\BL_\kappa A$; 
     \item   $A \in \prokH$ provided $\R A \subseteq \BL_\kappa A \cap \M A$, that is each relative annihilator is normal and belongs to $\BL_\kappa A$.
    \end{enumerate}  
\end{definition}

The following is immediate from the definitions:

\begin{proposition} \label{prop: char of prokH}
    $\prokH = \kHeyt \cap \proH$.
\end{proposition}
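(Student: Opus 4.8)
The plan is simply to unwind the three relevant definitions and use the elementary fact that a containment into an intersection splits into two containments. By \cref{def: kappa H}(1), membership $A\in\kHeyt$ means $\R A\subseteq\BL_\kappa A$, and by \cref{defn: ProH}(3), membership $A\in\proH$ means $\R A\subseteq\M A$. Under the standing identifications (\cref{conv} together with the fact that both $\BL_\kappa A$ and $\M A$ sit inside $\BL A$), the set $\BL_\kappa A\cap\M A$ is a well-defined subset of $\BL A$ containing $A$, so $\R A\subseteq\BL_\kappa A$ and $\R A\subseteq\M A$ if and only if $\R A\subseteq\BL_\kappa A\cap\M A$. The right-hand side of this last equivalence is, by \cref{def: kappa H}(2), exactly the assertion $A\in\prokH$.

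First I would record: $A\in\prokH\iff\R A\subseteq\BL_\kappa A\cap\M A\iff\bigl(\R A\subseteq\BL_\kappa A\text{ and }\R A\subseteq\M A\bigr)\iff\bigl(A\in\kHeyt\text{ and }A\in\proH\bigr)\iff A\in\kHeyt\cap\proH$. Since each biconditional is either the definition being applied or the trivial set-theoretic observation about containment into an intersection, the proof is a one-line chain of equivalences and there is no real obstacle; the statement is essentially definitional, which is why the text flags it as ``immediate from the definitions.'' The only point worth a word of care is to make explicit that the intersection is taken inside $\BL A$, so that it makes sense to compare $\R A$ with it — but this is already guaranteed by the conventions fixed earlier (in particular $\M A\subseteq\BL A$ from \cref{sec: prelims} and $\BL_\kappa A\subseteq\BL A$ by construction).
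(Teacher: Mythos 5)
Your proof is correct and matches the paper's treatment: the paper simply states that the proposition is immediate from the definitions, and your chain of equivalences is exactly the definitional unwinding intended. The remark about the intersection being taken inside $\BL A$ is a reasonable bit of extra care but adds nothing beyond what the paper's conventions already guarantee.
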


\begin{theorem} \label{prop: kH hierarchy}

\[\begin{tikzcd}[sep=small]
	 & {\mathord{\omega_0}\Heyt} & \mathord{\omega_1 \Heyt} &{\kHeyt} &{\bigcup_\kappa \kHeyt} &\DLat \\
	\Heyt&&&&&&&\\
	& {\bf{pro}}{\mathord{\omega_0}\Heyt} & {\bf{pro}}{\mathord{\omega_1} \Heyt} &{\prokH} &{\bigcup_\kappa \prokH} &\proH \\
	\arrow[Rightarrow, no head, from=2-1, to=1-2]
	\arrow[ hook, from=1-2, to=1-3]
	\arrow[dashed, hook, from=1-3, to=1-4]
	\arrow[dashed,hook, from=1-4, to=1-5]
	\arrow[Rightarrow, no head, from=1-5, to=1-6]
    \arrow[Rightarrow, no head, from=3-5, to=3-6]
	\arrow[hook, from=3-3, to=1-3]
	\arrow[ hook, from=3-2, to=3-3]
	\arrow[hook, from=3-3, to=3-4]
	\arrow[dashed, hook, from=3-4, to=3-5]
	\arrow[hook, from=3-5, to=1-5]
        \arrow[hook, from=3-6, to=1-6]
       \arrow[hook, from=3-4, to=1-4]
	\arrow[Rightarrow, no head, from=2-1, to=3-2]
\end{tikzcd}\]

\end{theorem}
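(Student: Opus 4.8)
The theorem asserts two parallel five-term hierarchies, one from $\omega_0\Heyt$ up to $\DLat$ in the top row, the other from $\mathbf{pro}\omega_0\Heyt$ up to $\proH$ in the bottom row, together with the vertical inclusions $\prokH\hookrightarrow\kHeyt$ (which is \cref{prop: char of prokH}) and the diagonal embeddings of $\Heyt$ into the two $\omega_0$-levels. The strategy is to imitate the pattern of \cref{prop: kappa JD,ProH hierarchy}: first establish the containments, then exhibit witnesses showing each containment is proper. As the statement and the preceding remark say, we only need to sketch this since the arguments parallel earlier ones.

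First I would record the easy containments. For $\lambda\le\kappa$ we have $\BL_\lambda A\subseteq\BL_\kappa A$, so $\R A\subseteq\BL_\lambda A$ implies $\R A\subseteq\BL_\kappa A$, giving $\lambda\Heyt\subseteq\kappa\Heyt$; intersecting with $\proH$ (using \cref{prop: char of prokH}) gives $\mathbf{pro}\lambda\Heyt\subseteq\prokH$. At the bottom of each row, $A\in\Heyt$ means $\R A\subseteq A=\BL_{\omega_0}A$, which is exactly $A\in\omega_0\Heyt$; and a Heyting lattice is proHeyting (principal ideals are normal), so $\Heyt\subseteq\mathbf{pro}\,\omega_0\Heyt$, and in fact $\Heyt=\mathbf{pro}\,\omega_0\Heyt=\omega_0\Heyt$ since $\R A\subseteq\BL_{\omega_0}A=A$ already forces every relative annihilator principal. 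The top of each row is the union $\bigcup_\kappa\kHeyt$, and the inclusion $\bigcup_\kappa\kHeyt\subseteq\DLat$ is trivial; conversely any $A\in\DLat$ has $\R A\subseteq\BL A$ and $|\R A|$ relative annihilators each of which lies in $\BL_\kappa A$ once $\kappa$ exceeds its (bounded) size — more simply, take $\kappa>|\BL A|$, so $\BL_\kappa A=\BL A\supseteq\R A$; hence $\DLat=\bigcup_\kappa\kHeyt$. For the bottom row, $A\in\proH$ means $\R A\subseteq\M A$, and choosing $\kappa\ge\delta_{\BL}(\M A)$ (equivalently large) we also get $\R A\subseteq\BL A=\BL_\kappa A$ via \cref{char of proH}, so $A\in\prokH$; thus $\proH=\bigcup_\kappa\prokH$. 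The vertical hooks $\prokH\hookrightarrow\kHeyt$ are \cref{prop: char of prokH}.

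Next come the strictness witnesses, which is where the actual work lies. For $\lambda<\kappa$ the ladder-type lattices already in play should separate the levels: the lattice $Q_\lambda=(\lambda\text{-ladder})\oplus 1$ of \cref{fig:kappa long ladder} has $\langle b_0,0\rangle=\bigcup_{\gamma<\lambda}{\downarrow}c_\gamma$, which is a $\kappa$-join of principal ideals (since $\lambda<\kappa$) but is not in $\BL_\lambda Q_\lambda$ because it is not a $\lambda$-join of elements below it; this should give $Q_\lambda\in\kappa\Heyt\setminus\lambda\Heyt$. To separate $\omega_0\Heyt=\Heyt$ from $\omega_1\Heyt$ one wants a distributive lattice with a relative annihilator that is a countable but not finite join of principal ideals — the ladder $P$ of \cref{ex: ladder plus N}, or a truncation, is the natural candidate (its $\langle b_0,0\rangle=\bigcup_n{\downarrow}c_n$ lies in $\BL_{\omega_1}$ but not in $\BL_{\omega_0}=P$). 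Finally, to show $\bigcup_\kappa\kHeyt=\DLat$ is attained only in the limit (i.e.\ the inclusion $\kHeyt\subsetneq\DLat$ for every fixed $\kappa$) one exhibits, for each $\kappa$, a distributive lattice $A$ with a relative annihilator requiring joins of size $\ge\kappa$ — again a long ladder $Q_\kappa$ works. The analogous separations in the bottom row are obtained by intersecting these witnesses with $\proH$: the ladders $R_\lambda=\lambda\oplus\omega^{\mathrm{op}}$ of \cref{fig:kappa long line}, being Heyting lattices, lie in every $\prokH$, whereas the $Q_\lambda$ are not even proHeyting, so one needs proHeyting variants — here $R_\lambda$ or products of ladders with chains, as in the proof of \cref{ProH hierarchy}, should do the separating, and I would reuse those computations verbatim.

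The main obstacle is pinning down, for the strictness claims, a single family of examples that simultaneously (i) is distributive, (ii) has all relative annihilators in $\BL_\kappa$ but not all in $\BL_\lambda$ for the top row, and (iii) additionally has all relative annihilators normal for the bottom row — conditions (ii) and (iii) pull in opposite directions, since the cleanest non-normal relative annihilator (the $Q_\lambda$ example) is exactly what one uses for the top row but is forbidden in the bottom row. I expect the resolution to mirror the proof of \cref{ProH hierarchy}: use $Q_\lambda$-type ladders-with-top for the top-row strictness and switch to the Heyting ladders $R_\lambda$ (whose DM-completion equals their BL-completion) for the bottom-row strictness, checking in each case that the relevant relative annihilator sits at precisely the right level of the BL-tower. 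Since all the needed computations on these ladders have already been carried out in the proofs of \cref{prop: kappa JD,ProH hierarchy}, the sketch can simply cite them.
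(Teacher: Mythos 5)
Your proposal follows the paper's proof almost step for step: the identification $\omega_0\Heyt=\Heyt=\mathbf{pro}\,\omega_0\Heyt$, the monotonicity of both hierarchies in $\kappa$, the computation of the unions by taking $\kappa$ at least $\delta_{\BL}(A)$, the vertical inclusions via \cref{prop: char of prokH}, and the use of $Q_\lambda$ from \cref{fig:kappa long ladder} both to separate $\lambda\Heyt$ from $\kHeyt$ and to show $\prokH\subsetneq\kHeyt$ (since $\langle b_0,0\rangle$ is not normal) are all exactly the paper's argument.

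The one genuine gap is the properness of $\mathbf{pro}\lambda\Heyt\subseteq\prokH$ in the bottom row. You correctly identify this as the main obstacle---a witness must lie in $\proH$, which rules out $Q_\lambda$---but the candidates you then offer do not work: $R_\lambda=\lambda\oplus\omega^{\mathrm{op}}$ is a bounded chain, hence a Heyting lattice, hence lies in $\mathbf{pro}\,\omega_0\Heyt$ and separates nothing; and the computations from \cref{ProH hierarchy} that you propose to reuse concern the classes $\kproH$ (defined by $\BL_\kappa(\M A)=\M A$), which form a hierarchy that \emph{decreases} as $\kappa$ grows, not the increasing hierarchy $\prokH$ at issue here, so they do not transfer. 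What is needed is a proHeyting, non-Heyting lattice one of whose (necessarily normal) relative annihilators enters the BL-tower of $A$ exactly at level $\kappa$; neither $Q_\lambda$ nor $R_\lambda$ has this property, and your sketch never produces such an example. For what it is worth, the paper's own proof simply asserts that $Q_\lambda$ witnesses the strictness ``of the inclusions'' in both rows, which runs into precisely the objection you raise: since $\langle b_0,0\rangle$ is not normal, $Q_\lambda\notin\proH$ and hence $Q_\lambda\notin\prokH$ for any $\kappa$. So you have put your finger on a real soft spot, but your proposal does not repair it.
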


\begin{proof}
For any $A \in \DLat$, since $\BL_{\omega_0}A = A$, we see that $A\in\omega_0\Heyt$ if and only if each relative annihilator is principal. Therefore, since principal ideals are normal,  $\omega_0\Heyt = \Heyt = {\bf pro}\omega_0\Heyt$. 
It is also clear that $\lambda\le\kappa$ implies $\lambda\Heyt \subseteq \kHeyt$ and ${\bf pro}\lambda\Heyt \subseteq \prokH$. That the inclusions are proper for $\lambda<\kappa$ is witnessed by the lattice $Q_\lambda$ in \cref{fig:kappa long ladder} (since $\langle b_0,0 \rangle$ is a $\kappa$-join of principal ideals, but not a $\lambda$-join).
The same lattice witnesses that the inclusion $\prokH \subseteq \kHeyt$ is proper (since $\langle b_0,0 \rangle$ is not a normal ideal). 

Any distributive lattice $A$ is in $\kHeyt$ for $\kappa\geq \delta_\BL(A)$, and thus $\bigcup_\kappa \kHeyt = \DLat$. By \cref{prop: char of prokH}, 
$\prokH \subseteq  \proH$ for each $\kappa$. Therefore, $\bigcup_\kappa \prokH \subseteq \proH$. For the reverse inclusion, if $A\in\proH$, then each relative annihilator belongs to $\M A$, which is equal to $\BL A$. But the latter is equal to $\BL_\kappa A$ for $\kappa \geq \delta_{\BL}(A)$, and hence $A\in \prokH$.
\end{proof}

\begin{theorem} \label{thm: kappaH} 
For $A \in \DLat$ and any regular cardinal $\kappa$,
\begin{enumerate}[label=\upshape(\arabic*), ref = \thetheorem(\arabic*)]
    \item  $A\in\kHeyt$ if and only if $\pH A \subseteq \BL_\kappa A $; \label[theorem]{thm: kappaH 1}
    \item  $A\in\prokH$ if and only if $\pH A \subseteq \BL_\kappa A \cap \M A$. \label[theorem]{thm: kappaH 2}
\end{enumerate} 
\end{theorem}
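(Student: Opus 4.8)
The plan is to derive both equivalences purely from the definition of $\pH A$ as the bounded sublattice of $\BL A$ generated by $\R A$, together with \cref{thm: MA frame} and the characterization $\prokH = \kHeyt\cap\proH$ from \cref{prop: char of prokH}.

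For part (1), the backward direction is immediate: if $\pH A\subseteq\BL_\kappa A$ then, since $\R A\subseteq\pH A$, we have $\R A\subseteq\BL_\kappa A$, which is the definition of $A\in\kHeyt$. For the forward direction, suppose $A\in\kHeyt$, so $\R A\subseteq\BL_\kappa A$. I would note that $\BL_\kappa A$, being a sub-$\kappa$-frame of $\BL A$ with $\kappa\ge\omega_0$, is in particular a bounded sublattice of $\BL A$: it is closed under finite meets and finite joins (these are $\kappa$-joins), contains $1$, and contains $0=\bigvee\emptyset$. Since $\pH A$ is by definition the least bounded sublattice of $\BL A$ containing $\R A$, it follows that $\pH A\subseteq\BL_\kappa A$.

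For part (2), the backward direction is again immediate: $\pH A\subseteq\BL_\kappa A\cap\M A$ forces $\R A\subseteq\BL_\kappa A\cap\M A$, which is exactly $A\in\prokH$. For the forward direction, assume $A\in\prokH=\kHeyt\cap\proH$. From $A\in\proH$ and \cref{thm: MA frame} we get $\M A=\BL A$, hence $\BL_\kappa A\cap\M A=\BL_\kappa A$ since $\BL_\kappa A\subseteq\BL A$. From $A\in\kHeyt$ and part (1) we get $\pH A\subseteq\BL_\kappa A=\BL_\kappa A\cap\M A$, as desired.

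The one subtlety to keep in mind is that $\M A$ is only a sub-meet-semilattice of $\BL A$, not in general a sublattice, so one cannot argue directly that the sublattice $\pH A$ generated by $\R A\subseteq\M A$ lies inside $\M A$; the resolution is precisely that the proHeyting hypothesis collapses $\M A$ onto all of $\BL A$, making the $\M A$-constraint in part (2) automatic. Beyond this bookkeeping I do not anticipate any real obstacle.
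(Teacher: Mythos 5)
Your proposal is correct and follows essentially the same route as the paper: part (1) is read off from the definition of $\pH A$ as the bounded sublattice generated by $\R A$ (with $\BL_\kappa A$ observed to be such a sublattice), and part (2) reduces to part (1) by using $\prokH\subseteq\proH$ and \cref{thm: MA frame} to conclude $\M A=\BL A$, which makes the $\M A$-constraint vacuous. Your closing remark about why one cannot generate inside $\M A$ directly is exactly the point the paper's argument implicitly relies on.
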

\begin{proof}
(1) This follows from the definition of $\pH A$.

(2) For the forward implication, $\prokH \subseteq \proH$ by \cref{prop: char of prokH}. Therefore, for each $A\in\prokH$, $\M A = \BL A$. Consequently, from $\R A \subseteq \BL_\kappa A \cap \M A$ it follows that  
$\pH A\subseteq \BL_\kappa A \cap \M A$. The reverse implication is obvious. 
\end{proof}

\subsection{Summary of hierarchies}

 The following table summarizes the characterizations, via Bruns-Lakser towers, of the classes of bounded distributive lattices obtained in this paper. The class $\kJD$ is missing from the table because the relevant Bruns-Lakser tower provides a characterization of $\kFrm$ rather than $\kJD$. The latter requires a different tower construction and will be addressed in future work. However, in \cref{JID as Priestley} we do provide a characterization of $\kJD$-lattices in the language of Priestley spaces.

\vspace{2mm}

\begin{center}
    
\begin{tabular}{|l|ll|}
\hline
$A\in\kFrm$  & $\BL_\kappa A = A$ &(\cref{thm: kappaJD})\\
\hline
$A\in\CkFrm$ &$\BL_\kappa(\M A) = A$ &(\cref{thm: CkFrm char}) \\
\hline
$A\in \kappa\CHeyt$& $\BL_\kappa(\pH A) = A$&(\cref{thm: kCH})\\
\hline
$A \in \kproH$ & $\BL_\kappa(\M A) \subseteq \M A$ &(\cref{rem: kproH})\\
\hline
$A\in\kHeyt$ & $\pH A \subseteq \BL_\kappa A $ &(\cref{thm: kappaH 1})\\
\hline
$A\in\prokH$ & $\pH A \subseteq \BL_\kappa A \cap \M A$ &(\cref{thm: kappaH 2})\\
\hline

\end{tabular}

\end{center}

\vspace{2mm}

We point out the following:
    
\begin{itemize}
    \item The classes $\kFrm,\CkFrm,$ and $\kappa\CHeyt$ provide cardinal generalizations of three different attributes of frames. Each is described by the collapse of the $\kappa$-level of the corresponding BL-tower (of $A$, $\M A$, or $\BL A$) down to $A$.   
    \item The $\kHeyt$ and $\prokH$ hierarchies are intrinsically different from the others in that as cardinal increases the class gets bigger.
    \item By \cref{CproH}, there is no need to consider ${\bf C}\kproH$, rendering the non-symmetry of the diagram below.
    \item The classes $\kappa\CDLat$ do not appear in the diagram since the primary interest is in various degrees of distributivity relative to various degrees of completeness.
    \item Developing a similar hierarchy between $\proH$ and $\JID$ requires a different tower, which will be discussed in future work. 
\end{itemize}

The diagram in \cref{fig:Summary of hierarchies} outlines the interactions between
the hierarchies we developed. 
These are obtained by studying the BL-towers of $A$, $\M A$, and $\pH A$, and are often described by the collapse of the $\kappa$-level of the corresponding BL-tower. But there are other possible outcomes of the collapse as well. Some of them are outlined in \cref{prop: other possibilities} below. It might also be worthwhile to study BL-towers of other sub-meet-semilattices of $\BL A$.

\begin{figure}[!ht]
    \begin{center}
  
\adjustbox{scale=.69}{
$
\begin{tikzcd}[column sep = small]
	&&&&& {\color{red}{\JID}} \\
	\\
	&&&&&& {\bigcap_\kappa \kJD} \\
	\\
	&&&&&&& {\kJD} \\
	{\color{red}{\proH}} \\
	&& {\bigcap_\kappa \kproH} &&&&&& {\omega_1 \bf{JD}} \\
	{\bigcup_\kappa \prokH} &&&& {\kproH} \\
	&&&&&& {\omega_1 \proH} &&& {\omega_0 \bf{JD}} \\
	{\prokH} &&&&&&&& {\omega_0 \proH} \\
	&&&&&&&&&& {\color{red}{\DLat}} \\
	{{\bf pro}\omega_1 \Heyt} &&&&&&&& {\bigcup_\kappa \kHeyt} \\
	&&&&&& {\kHeyt} &&& {\omega_0 \Frm} \\
	{\bf pro}\omega_0\Heyt &&&& {\omega_1 \Heyt} &&&&&& {\color{red}{\CDLat}} \\
	&& {\omega_0 \Heyt} &&&&&& {\omega_1 \Frm} \\
	{\color{red}{\Heyt}} &&&&&&&&& {{\bf C}\omega_0 \Frm} \\
	& {\omega_0 \CHeyt} &&&&&& {\kappa \Frm} \\
	&& {\omega_1 \CHeyt} &&&&&& {C\omega_1 \Frm} \\
	&&& {\kappa \CHeyt} &&& {\bigcap_\kappa \kappa \Frm} \\
	&&&& {\bigcap_\kappa \kappa \CHeyt} & {\color{red}{\CHeyt}}={\color{red}{\Frm}} & {\bigcap_\kappa \CkFrm} & {\CkFrm}
	\arrow[Rightarrow, no head, from=1-6, to=3-7]
	\arrow[dashed, hook', from=3-7, to=5-8]
	\arrow[dashed, hook', from=5-8, to=7-9]
	\arrow[hook, from=6-1, to=1-6]
	\arrow[Rightarrow, no head, from=6-1, to=7-3]
	\arrow[hook, from=7-3, to=3-7]
	\arrow[dashed, hook', from=7-3, to=8-5]
	\arrow[hook', from=7-9, to=9-10]
	\arrow[Rightarrow, no head, from=8-1, to=6-1]
	\arrow[hook', from=8-1, to=12-9]
	\arrow[hook, from=8-5, to=5-8]
	\arrow[dashed, hook', from=8-5, to=9-7]
	\arrow[hook, from=9-7, to=7-9]
	\arrow[hook', from=9-7, to=10-9]
	\arrow[dashed, hook, from=10-1, to=8-1]
	\arrow[hook', from=10-1, to=13-7]
	\arrow[hook, from=10-9, to=9-10]
	\arrow[hook', from=10-9, to=11-11]
	\arrow[Rightarrow, no head, from=11-11, to=9-10]
	\arrow[dashed, hook, from=12-1, to=10-1]
	\arrow[hook', from=12-1, to=14-5]
	\arrow[Rightarrow, no head, from=12-9, to=11-11]
	\arrow[dashed, hook, from=13-7, to=12-9]
	\arrow[color={rgb,255:red,235;green,20;blue,31},dotted, Rightarrow, no head, from=13-10, to=9-10]
	\arrow[Rightarrow, no head, from=13-10, to=11-11]
	\arrow[hook, from=14-1, to=12-1]
	\arrow[dashed, hook, from=14-5, to=13-7]
	\arrow[hook, from=14-11, to=11-11]
	\arrow[hook, from=15-3, to=14-5]
	\arrow[color={rgb,255:red,235;green,20;blue,31}, dotted,hook, from=15-9, to=7-9]
	\arrow[hook, from=15-9, to=13-10]
	\arrow[Rightarrow, no head, from=16-1, to=14-1]
	\arrow[Rightarrow, no head, from=16-1, to=15-3]
	\arrow[hook, from=16-10, to=13-10]
	\arrow[Rightarrow, no head, from=16-10, to=14-11]
	\arrow[hook, from=17-2, to=13-10]
	\arrow[Rightarrow, no head, from=17-2, to=16-1]
	\arrow[color={rgb,255:red,235;green,20;blue,31}, dotted, hook, from=17-8, to=5-8]
	\arrow[dashed, hook, from=17-8, to=15-9]
	\arrow[hook, from=18-3, to=15-9]
	\arrow[hook, from=18-3, to=17-2]
	\arrow[hook, from=18-9, to=15-9]
	\arrow[dashed, hook, from=18-9, to=16-10]
	\arrow[hook, from=19-4, to=17-8]
	\arrow[dashed, hook, from=19-4, to=18-3]
	\arrow[dashed, hook, from=19-7, to=17-8]
	\arrow[dashed, hook, from=20-5, to=19-4]
	\arrow[color={rgb,255:red,235;green,20;blue,31}, dotted, hook, from=20-6, to=1-6]
	\arrow[Rightarrow, no head, from=20-6, to=19-7]
	\arrow[Rightarrow, no head, from=20-6, to=20-5]
	\arrow[Rightarrow, no head, from=20-7, to=20-6]
	\arrow[dashed, hook, from=20-7, to=20-8]
	\arrow[hook, from=20-8, to=17-8]
	\arrow[dashed, hook, from=20-8, to=18-9]
\end{tikzcd}
$
}
\end{center}
\caption{Summary of hierarchies}
    \label{fig:Summary of hierarchies}
\end{figure}

\begin{samepage}
\begin{proposition} \label{prop: other possibilities}
    Let $A \in \DLat$ and $\kappa$ be a regular cardinal.
    \begin{enumerate}[label=\upshape(\arabic*), ref = \theproposition(\arabic*)]
    \item $\BL_\kappa A = \pH A$ if and only if $A\in\kHeyt$ and $\pH A$ is a $\kappa$-frame.
    \item $\BL_\kappa(\pH A) = \M A$ 
        if and only if $A\in\proH$ and $\BL_\kappa (\pH A)$ is a frame.
    \item $\BL_\kappa(\M A) = \pH A$ if and only if every normal ideal is in $\pH A$, every relative annihilator is in $\BL_\kappa(\M A)$, and $\pH A$ is a $\kappa$-frame.
    \end{enumerate}
\end{proposition}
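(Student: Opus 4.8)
The plan is to obtain all three equivalences by bookkeeping with the generation properties of the operators $\BL_\kappa(-)$ and $\pH(-)$, together with \cref{sub kappa frame}, \cref{thm: kappaJD}, \cref{thm: kappaH 1} and \cref{thm: MA frame}. I will use repeatedly that, for a sub-meet-semilattice $C$ of $\BL A$ with $A\le C$, the object $\BL_\kappa C$ is the smallest sub-$\kappa$-frame of $\BL A$ containing $C$, and in particular is itself a $\kappa$-frame; that by \cref{sub kappa frame}, for $A\le B\le\BL A$ the conditions ``$B$ is a $\kappa$-frame'', ``$B$ is a sub-$\kappa$-frame of $\BL A$'' and ``$\BL_\kappa B=B$'' are equivalent; that $\pH A$ is the bounded sublattice of $\BL A$ generated by $\R A$, so (as in \cref{thm: kappaH 1}) a sub-$\kappa$-frame of $\BL A$ contains $\R A$ if and only if it contains $\pH A$; that a normal ideal is by definition an element of $\M A$; and that $A\in\proH$ if and only if $\M A=\BL A$ (\cref{thm: MA frame}).

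For part~(1): if $\BL_\kappa A=\pH A$, then $\pH A\subseteq\BL_\kappa A$ gives $A\in\kHeyt$ by \cref{thm: kappaH 1}, while $\pH A=\BL_\kappa A$ is a $\kappa$-frame since $\BL_\kappa A$ always is. Conversely, $A\in\kHeyt$ yields $\pH A\subseteq\BL_\kappa A$ by \cref{thm: kappaH 1}; and if moreover $\pH A$ is a $\kappa$-frame then, by \cref{sub kappa frame}, $\pH A$ is a sub-$\kappa$-frame of $\BL A$ containing $A$, hence contains the sub-$\kappa$-frame $\BL_\kappa A$ generated by $A$, so the two coincide.

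For part~(3): if $\BL_\kappa(\M A)=\pH A$, then every normal ideal lies in $\M A\subseteq\BL_\kappa(\M A)=\pH A$, every relative annihilator lies in $\R A\subseteq\pH A=\BL_\kappa(\M A)$, and $\pH A=\BL_\kappa(\M A)$ is a $\kappa$-frame. Conversely, assume every normal ideal is in $\pH A$ (so $\M A\subseteq\pH A$), every relative annihilator is in $\BL_\kappa(\M A)$, and $\pH A$ is a $\kappa$-frame. Then $\BL_\kappa(\M A)$ is a bounded sublattice of $\BL A$ containing $\R A$, hence contains $\pH A$; and $\pH A$ is, by \cref{sub kappa frame}, a sub-$\kappa$-frame of $\BL A$ containing $\M A$, hence contains $\BL_\kappa(\M A)$; so the two are equal.

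For part~(2): if $\BL_\kappa(\pH A)=\M A$, then $\R A\subseteq\pH A\subseteq\BL_\kappa(\pH A)=\M A$, so $A\in\proH$ and hence $\M A=\BL A$ by \cref{thm: MA frame}; thus $\BL_\kappa(\pH A)=\BL A$ is a frame. Conversely, assume $A\in\proH$, so $\M A=\BL A$, and set $B:=\BL_\kappa(\pH A)$; then $\pH A\le B\le\BL(\pH A)=\BL A$ by \cref{BL of DM 3}, so \cref{BL A = BL B} gives $\BL B=\BL(\pH A)=\BL A$. If $B$ is a frame, then it is a $\kappa'$-frame for every regular $\kappa'$, whence $\BL_{\kappa'}B=B$ for all $\kappa'$ by \cref{thm: kappaJD}, and therefore $\BL B=B$; so $B=\BL A=\M A$, as needed. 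The only step requiring genuine thought is this last one---that a frame squeezed between $\pH A$ and $\BL A=\BL(\pH A)$ must equal $\BL A$---and it is essentially the sole obstacle; everything else is routine once the generation properties above are in hand.
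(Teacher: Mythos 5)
Your proposal is correct and follows essentially the same route as the paper: the forward directions are the same immediate observations, and the converses use \cref{thm: kappaH 1}, \cref{sub kappa frame}/\cref{prop: pHA kFrm}, and the identification $\BL(\pH A)=\BL A$ in the same way (your derivation of $\BL B=B$ for a frame $B$ via $\BL_{\kappa'}B=B$ for all $\kappa'$ is just a slightly more explicit version of the paper's appeal to $\BL(\BL_\kappa(\pH A))=\BL_\kappa(\pH A)$). No gaps.
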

\end{samepage}

\begin{proof}
   (1) Suppose $\BL_\kappa A = \pH A$. Then $\pH A$ is a $\kappa$-frame, 
     and $A\in\kHeyt$ by \cref{thm: kappaH 1}. 
    Conversely, suppose $A\in\kHeyt$ and $\pH A$ is a $\kappa$-frame. The former implies that $\pH A \subseteq \BL_\kappa A$. The latter, by \cref{prop: pHA kFrm}, implies that $\BL_\kappa(\pH A)=\pH A$. Therefore, $\BL_\kappa A \subseteq \BL_\kappa(\pH A) = \pH A$, yielding the equality.
 
    (2) Suppose $\BL_\kappa(\pH A) = \M A$. Then $\pH A\subseteq \M A$, so every relative annihilator is a normal ideal, and hence $A\in\proH$. Therefore, $\M A$ is a frame, and so $\BL_\kappa (\pH A)$ is a frame. Conversely, if $A\in\proH$, then $\M A = \BL A$. Also, since $\BL_\kappa (\pH A)$ is a frame, $\BL (\BL_\kappa(\pH A)) = \BL_\kappa(\pH A)$. But $ \BL ( \BL_\kappa(\pH A)) = \BL A$ by \cref{BL A = BL B,{conv: BL A=BL B}} and $\BL A = \M A$ since $A\in\proH$. Thus, $\BL_\kappa(\pH A) = \M A$.

   (3) Suppose $\BL_\kappa(\M A) = \pH A$. Then $\pH A$ is a $\kappa$-frame. Also, $\M A \subseteq \pH A$, so every normal ideal is in $\pH A$, and $\pH A\subseteq \BL_\kappa(\M A)$, so every relative annihilator is in $\BL_\kappa(\M A)$. 
    Conversely, since every relative annihilator is in $\BL_\kappa(\M A)$ and $\BL_\kappa(\M A)$ is a bounded sublattice of $\BL A$, we see that $\pH A \subseteq \BL_\kappa(\M A)$. Also, because every normal ideal is in $\pH A$, we have $\M A \subseteq \pH A$. Therefore, since $\pH A$ is a $\kappa$-frame, \cref{prop: pHA kFrm} implies that $\BL_\kappa(\M A) \subseteq \BL_\kappa(\pH A) = \pH A$, yielding the equality.
\end{proof}

\section{Priestley perspective} \label{sec: duality}

In this final section we utilize Priestley duality for distributive lattices \cite{priestley_representation_1970,priestley_ordered_1972} to provide a different perspective on our hierarchies. Among other things, this will result in a generalization of Esakia representation of Heyting lattices \cite{esakia_topological_1974}.  

We recall that a {\em Priestley space} is a compact space $X$ equipped with a partial order $\le$ satisfying the {\em Priestley separation axiom}: if $x\not\le y$ then there is a clopen upset $U$ such that $x\in U$ and $y\notin U$. It is well known that the topology of each Priestley space is a Stone topology. Let $\Pries$ be the category of Priestley spaces and continuous order-preserving maps. We then have:

\begin{theorem} [Priestley duality] \label{thm: Priestley}
    $\DLat$ is dually equivalent to $\Pries$.
\end{theorem}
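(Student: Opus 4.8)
The plan is to exhibit the two contravariant functors realizing the duality and verify they are quasi-inverse. First I would define the functor $\mathord{\textsf{pt}} \colon \DLat \to \Pries$ on objects: given $A \in \DLat$, let $\mathord{\textsf{pt}}(A)$ be the set of prime filters of $A$ (equivalently, $\DLat$-homomorphisms $A \to \mathbf{2}$), topologized by taking as a subbasis the sets $\varphi_a = \{\,F : a \in F\,\}$ and their complements $\varphi_a^c$ for $a \in A$, and ordered by inclusion. One checks the Priestley separation axiom directly: if $F \not\subseteq G$, pick $a \in F \setminus G$; then $\varphi_a$ is a clopen upset separating them. Compactness follows from the prime filter theorem (an ultrafilter/Zorn argument): any subbasic cover has a finite subcover because a proper filter/ideal pair extends to a prime filter. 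On morphisms, a homomorphism $\fromto{h}{A}{B}$ induces $\fromto{h^{-1}}{\mathord{\textsf{pt}}(B)}{\mathord{\textsf{pt}}(A)}$, which is continuous (preimages of subbasic sets are subbasic) and order-preserving.

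Second I would define the functor in the other direction, $\O \colon \Pries \to \DLat$, sending a Priestley space $X$ to the bounded distributive lattice $\mathrm{ClopUp}(X)$ of clopen upsets of $X$, with $\cap$, $\cup$, $\emptyset$, $X$; a continuous order-preserving $\fromto{f}{X}{Y}$ goes to $\fromto{f^{-1}}{\O(Y)}{\O(X)}$, which lands in clopen upsets and is a bounded lattice homomorphism. The bulk of the work is then the two natural isomorphisms. For $\eta_A \colon A \to \O(\mathord{\textsf{pt}}(A))$, $a \mapsto \varphi_a$: injectivity is the prime filter separation theorem (distinct elements are separated by a prime filter), and the homomorphism property is routine; surjectivity — that \emph{every} clopen upset of $\mathord{\textsf{pt}}(A)$ has the form $\varphi_a$ — is the key step and uses compactness: a clopen upset $U$ is a finite union of subbasic clopens, and upset-ness plus the order structure lets one collapse this to a single $\varphi_a$. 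For $\varepsilon_X \colon X \to \mathord{\textsf{pt}}(\O(X))$, $x \mapsto \{\,U \in \O(X) : x \in U\,\}$: this is a well-defined prime filter, injective and order-reflecting by the Priestley separation axiom, surjective by a compactness argument (given a prime filter $F$ of clopen upsets, $\bigcap F \setminus \bigcup\{\text{clopen upsets not in }F\}$ is nonempty by the finite intersection property), continuous and open by construction, hence a homeomorphism and order-isomorphism. Naturality of both is a straightforward diagram chase.

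The main obstacle is the surjectivity of $\varepsilon_X$ (reconstructing points of a Priestley space from prime filters of its clopen upsets): this is where compactness of $X$ and the Priestley separation axiom are both essential, and it is the step that genuinely exploits the topological hypotheses rather than pure lattice theory. Everything else is either a direct verification or an application of the prime filter theorem. Since this is a classical result of Priestley \cite{priestley_representation_1970, priestley_ordered_1972}, I would present the proof as a guided recollection, emphasizing the construction of the two functors and stating the four verifications (well-definedness, the two unit/counit isomorphisms, naturality) with the compactness arguments highlighted, rather than reproducing every routine computation.
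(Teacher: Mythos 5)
The paper offers no proof of this statement: it is quoted as a classical theorem with citations to Priestley's original papers \cite{priestley_representation_1970,priestley_ordered_1972}, so there is no internal argument to compare against. Your outline is exactly the standard proof found in those references (and in \cite{davey_introduction_2002}) --- the two contravariant functors $\mathord{\textsf{pt}}$ and $\mathrm{ClopUp}$, the unit $a\mapsto\varphi_a$ and counit $x\mapsto\{U:x\in U\}$, with the prime filter theorem giving injectivity of the unit and compactness plus the Priestley separation axiom giving surjectivity of both --- and it is correct as a plan.
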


In particular, each bounded distributive lattice $A$ is represented as the lattice of clopen upsets of the Priestley space $X$ of prime filters of $A$ ordered by inclusion. The topology on $X$ is given by the basis
\[
\{ \s(a)\setminus\s(b) : a,b\in A \},
\]
where $\s(a) = \{ x \in X : a \in x \}$. The {\em Stone map} $\s:A \to {\sf ClopUp}(X)$ is an isomorphism from $A$ onto the lattice of clopen upsets of $X$, yielding our desired representation. 

Under this isomorphism, the ideals of $A$ are represented as open upsets, normal ideals as Dedekind-MacNeille upsets, and D-ideals as Bruns-Lakser upsets of the Priestley space of $A$. To see this, we recall that each Priestley space $X$ is equipped with two additional topologies: the topology of open upsets and the topology of open downsets. We let $\sf cl$ and $\sf int$ denote the closure and interior of the original Stone topology on $X$, ${\sf cl_1}$ and ${\sf int_1}$ the closure and interior of the open upset topology, and  ${\sf cl_2}$ and ${\sf int_2}$ the closure and interior of the open downset topology. Observe that, for $S\subseteq X$, we have 
\[
{\sf cl}_1(S)={\downarrow}{\sf cl}(S) \quad \mbox{and} \quad {\sf cl}_2(S)={\uparrow}{\sf cl}(S),
\]
and hence 
\[
{\sf int}_1(S) = X \setminus {\downarrow} (X \setminus {\sf int}(S)) \quad \mbox{and} \quad {\sf int}_2(S) = X \setminus {\uparrow} (X \setminus {\sf int}(S)).
\]  

\begin{definition}
    Let $X$ be a Priestley space.
    \begin{enumerate}
        \item Let ${\sf OpUp}(X)$ be the frame of open uspets of $X$.
        \item Call $U\in{\sf OpUp}(X)$ a {\em Dedekind-MacNeille set}, or simply a {\em DM-set}, if ${\sf int_1\,cl_2}(U) = U$. Let ${\sf DM}(X)$ be the complete lattice of DM-sets of $X$.
        \item Call $U\in{\sf OpUp}(X)$ a {\em Bruns-Lakser set}, or simply a {\em BL-set}, if ${\sf int_1\,cl}(U) = U$. Let ${\sf BL}(X)$ be the frame of BL-sets of $X$.
    \end{enumerate}
\end{definition}

\begin{theorem} \label{thm: char of I DM BL}
    Let $A\in{\DLat}$ and $X$ be the Priestley space of $A$.
    \begin{enumerate}[label=\normalfont(\arabic*), ref = \thetheorem(\arabic*)]
    \item \cite{priestley_representation_1970} $\I A \cong {\sf OpUp}(X)$. \label[theorem]{thm: char of I DM BL 1}
    \item \cite[Thm.~3.5]{bezhanishvili_macneille_2004} $\M A \cong {\sf DM}(X)$.
        \label[theorem]{thm: char of I DM BL 2}
        \item \cite[Prop.~6.1, 8.12]{bezhanishvili_proximity_2014} $\BL A \cong {\sf BL}(X)$.
        \label[theorem]{thm: char of I DM BL 3}
    \end{enumerate}
\end{theorem}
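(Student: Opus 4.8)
The plan is to prove (1) from scratch and then obtain (2) and (3) from it, by recognising the MacNeille normalisation and the Bruns--Lakser (D-ideal) closure, transported along the isomorphism of (1), as the operators $U\mapsto{\sf int_1\,cl_2}(U)$ and $U\mapsto{\sf int_1\,cl}(U)$ on ${\sf OpUp}(X)$. (All three statements are in the cited literature; the point is to indicate a uniform route.) Three standard facts about a Priestley space $X$ will be used repeatedly: the clopen upsets are exactly the sets $\s(a)$; every open upset is a union of clopen upsets and, dually, every closed upset is an intersection of clopen upsets (the first follows from compactness of the complement together with the Priestley separation axiom); and the smallest closed upset containing a subset $S$ is ${\uparrow}{\sf cl}(S)={\sf cl_2}(S)$, while the largest open upset inside a subset is its ${\sf int_1}$. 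For (1) I would set $\fromto{\varphi}{\I A}{{\sf OpUp}(X)}$, $\varphi(J)=\bigcup_{a\in J}\s(a)$, with inverse $V\mapsto\{a\in A:\s(a)\subseteq V\}$; using that $\s$ is an order-embedding preserving finite joins, together with compactness of each $\s(a)$, one checks these are mutually inverse order-preserving maps, so $\varphi$ is a frame isomorphism.

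For (2), fix an ideal $J$ and put $V=\varphi(J)$. Since $\s(c)\supseteq\s(a)$ iff $c\ge a$, we get $c\in J^u$ iff $\s(c)\supseteq V$, hence $b\in J^{u\ell}$ iff $\s(b)$ is contained in $\bigcap\{\s(c):\s(c)\supseteq V\}$; but the intersection of all clopen upsets above $V$ is the smallest closed upset above $V$, namely ${\sf cl_2}(V)$, so $\varphi(J^{u\ell})={\sf int_1\,cl_2}(V)$. Thus normalisation corresponds to ${\sf int_1\,cl_2}$; the normal ideals (which for $A\in\DLat$ are genuine ideals) correspond exactly to the DM-sets, and $\varphi$ restricts to an isomorphism of complete lattices $\M A\cong{\sf DM}(X)$.

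For (3), the target is: an ideal $E$ is a D-ideal iff $\varphi(E)$ is a BL-set; the frame structure on ${\sf BL}(X)$ is then inherited from $\BL A$ via the remark that D-ideals are precisely intersections of relative annihilators. For the forward implication, suppose $V=\varphi(E)$ is a BL-set and $S\subseteq E$ is admissible with join $c$. I claim $\s(c)\subseteq{\sf cl}(\bigcup_{s\in S}\s(s))\subseteq{\sf cl}(V)$: if the first inclusion failed, a basic clopen $\s(p)\setminus\s(q)$ would separate some $x\in\s(c)$ from every $\s(s)$, giving $p\wedge s\le q$ for all $s\in S$ and hence, by admissibility, $p\wedge c=\bigvee\{p\wedge s:s\in S\}\le q$, contradicting $x\in\s(p)\cap\s(c)$. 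Since $\s(c)$ is an open upset, the claim yields $\s(c)={\sf int_1}(\s(c))\subseteq{\sf int_1\,cl}(V)=V$, so $c\in E$. For the converse, suppose $E$ is a D-ideal and $\s(a)\subseteq{\sf int_1\,cl}(V)\subseteq{\sf cl}(V)$; then $\s(a)\cap V$ is dense in the clopen set $\s(a)$, and I would use this to show that $S:=\{e\wedge a:e\in E\}$ (which lies in $E$ because $E$ is a downset) has least upper bound $a$ and that this join is distributive --- the distributivity by rerunning the density argument with $a$ replaced by $a\wedge b$ for an arbitrary $b\in A$. Hence $S$ is admissible, so $a=\bigvee S\in E$ and $\s(a)\subseteq V$; this shows ${\sf int_1\,cl}(V)\subseteq V$, i.e.\ $V$ is a BL-set.

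I expect part (3) to be the main obstacle, and within it the interaction of \emph{distributive} joins with the Stone map: unlike MacNeille normalisation, the D-ideal closure is not a bare intersection, so the correspondence with ${\sf int_1\,cl}$ has to be teased out through the two lemmas above --- namely $\s(\bigvee S)\subseteq{\sf cl}(\bigcup_{s}\s(s))$ for admissible $S$, and the construction, inside a given D-ideal, of an admissible subset with a prescribed join. Establishing $\varphi$ in (1) and transporting the lattice/frame structure across the restrictions in (2) and (3) should be routine.
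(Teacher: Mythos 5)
Your proposal is correct, but it is worth noting that the paper does not prove this theorem at all: it simply cites \cite{priestley_representation_1970}, \cite[Thm.~3.5]{bezhanishvili_macneille_2004}, and \cite[Prop.~6.1, 8.12]{bezhanishvili_proximity_2014}. What you supply is a self-contained, uniform derivation in which both completions are transported along the Priestley isomorphism $\varphi$ of part (1) and identified as the closure operators ${\sf int_1\,cl_2}$ and ${\sf int_1\,cl}$ on ${\sf OpUp}(X)$. The computations check out: in (2), the identification of $\bigcap\{\s(c):\s(c)\supseteq V\}$ with the smallest closed upset ${\sf cl_2}(V)$, and of the union of clopen upsets inside it with ${\sf int_1\,cl_2}(V)$, is exactly right; in (3), the separation argument via a basic clopen $\s(p)\setminus\s(q)$ correctly converts admissibility of $S$ into $\s(\bigvee S)\subseteq{\sf cl}\bigl(\bigcup_{s}\s(s)\bigr)$, and the converse density argument (that $S=\{e\wedge a:e\in E\}$ has distributive join $a$ whenever $\s(a)\subseteq{\sf cl}(\varphi(E))$) is the genuine content of the Bezhanishvili--Harding result and is carried out correctly, including the rerun with $a\wedge b$ to get distributivity rather than mere exactness. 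Two small points you leave implicit and should state: every D-ideal of $A\in\DLat$ is an honest ideal (finite subsets are admissible in a distributive lattice), so $\varphi$ is applicable to D-ideals and injective on them; and once $\varphi$ is shown to restrict to order isomorphisms onto ${\sf DM}(X)$ and ${\sf BL}(X)$, the complete-lattice and frame structures transfer automatically, so no separate argument about joins is needed. What your route buys over the citations is a single mechanism explaining why the two completions appear as the two interior-of-closure operators, which is precisely the picture the rest of \cref{sec: duality} relies on (e.g.\ \cref{rem: joins in DM and BL,lem: char distr joins}).
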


\begin{samepage}
    \begin{remark}\
\begin{enumerate}[label=\normalfont(\arabic*), ref = \theremark(\arabic*)]
    \item The isomorphism of \cref{thm: char of I DM BL 1} is obtained by sending each ideal $I$ of $A$ to the open upset $\s[I] := \bigcup \{ \s(a) : a\in I \}$; and the isomorphisms of \cref{thm: char of I DM BL 2,thm: char of I DM BL 3} are the appropriate restrictions.
    \item Finite meets in ${\sf OpUp}(X)$, ${\sf DM}(X)$, and ${\sf BL}(X)$ are finite intersections. The join in ${\sf OpUp}(X)$ is set-theoretic union, in ${\sf DM}(X)$ it is calculated by $\bigvee{\mathcal U} = {\sf int_1\,cl_2}\left(\bigcup {\mathcal U}\right)$ for ${\mathcal U}\subseteq{\sf DM}(X)$ and in ${\sf BL}(X)$ it is calculated by $\bigvee{\mathcal V} = {\sf int_1\,cl}\left(\bigcup {\mathcal V}\right)$ for ${\mathcal V}\subseteq{\sf BL}(X)$. \label[remark]{rem: joins in DM and BL}
\end{enumerate}
\end{remark}
\end{samepage}

We first characterize in the language of Priestley spaces when a join in $\M A$ is distributive. For this we recall that ${\sf ClopUp}(X)$ is a base for open upsets and $\{ F \setminus G : F,G \in {\sf ClopUp}(X) \}$ is a base for open sets.

\begin{theorem}\label{lem: char distr joins}
    Let $X$ be a Priestley space and $\mathcal U \subseteq {\sf DM}(X)$. Then $\bigvee\mathcal U$ is distributive if and only if $\bigvee \mathcal U = {\sf int_1cl}\bigcup\mathcal U$.
  \end{theorem}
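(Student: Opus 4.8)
The plan is to translate the statement into the algebra of $\M A$ and $\BL A$, and then read off both implications from what is already known about the essential embedding $\M A\hookrightarrow\BL A$. By \cref{thm: char of I DM BL} together with \cref{rem: joins in DM and BL}, under the isomorphisms $\M A\cong{\sf DM}(X)$ and $\BL A\cong{\sf BL}(X)$ (the latter restricting to the former), the join of $\mathcal U$ computed in ${\sf DM}(X)$ is ${\sf int_1cl_2}\bigcup\mathcal U$, while the join of $\mathcal U$ computed in ${\sf BL}(X)$ is ${\sf int_1cl}\bigcup\mathcal U$; this is legitimate since $\mathcal U\subseteq{\sf DM}(X)\subseteq{\sf BL}(X)$. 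Thus the assertion to prove is: $\bigvee\mathcal U$ computed in $\M A$ is a distributive join if and only if it coincides with $\bigvee\mathcal U$ computed in $\BL A$.

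For the forward direction, I would recall, exactly as in the proof of \cref{BL A = BL B}, that $\BL A$ is an essential extension of $A$, hence also of $\M A$ (because $A\le\M A\le\BL A$). By \cite[Thm.~2]{bruns_injective_1970}, an essential extension into $\BL A$ preserves distributive joins --- this is the same fact invoked in the proof of \cref{sub kappa frame}. Consequently the embedding $\M A\hookrightarrow\BL A$ carries a distributive join $\bigvee_{\M A}\mathcal U$ to $\bigvee_{\BL A}\mathcal U$. So if $\bigvee\mathcal U$ is distributive in $\M A$, then $\bigvee_{\M A}\mathcal U=\bigvee_{\BL A}\mathcal U$, i.e. ${\sf int_1cl_2}\bigcup\mathcal U={\sf int_1cl}\bigcup\mathcal U$.

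For the converse, suppose $\bigvee_{\M A}\mathcal U=\bigvee_{\BL A}\mathcal U$ and call this common value $E$. Let $V\in\M A$. Since finite meets in both $\M A$ and $\BL A$ are intersections and $\BL A$ is a frame, $V\wedge_{\M A}E=V\cap E=\bigvee_{\BL A}\{V\cap U:U\in\mathcal U\}$. Each $V\cap U$ lies in $\M A$, so on the one hand $\bigvee_{\BL A}\{V\cap U:U\in\mathcal U\}\le\bigvee_{\M A}\{V\cap U:U\in\mathcal U\}$ (a $\BL A$-join of elements of $\M A$ is below their $\M A$-join), and on the other hand $V\cap E\in\M A$ is an upper bound in $\M A$ of $\{V\cap U:U\in\mathcal U\}$, so $\bigvee_{\M A}\{V\cap U:U\in\mathcal U\}\le V\cap E$. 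Hence $V\wedge_{\M A}E=\bigvee_{\M A}\{V\wedge_{\M A}U:U\in\mathcal U\}$, which is precisely distributivity of $\bigvee_{\M A}\mathcal U=E$. Translating back through the isomorphisms finishes the proof.

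I expect the only genuinely delicate point to be the forward direction: a priori ``$\bigvee\mathcal U$ is a distributive join in $\M A$'' is weaker than ``$\bigvee_{\M A}\mathcal U=\bigvee_{\BL A}\mathcal U$'', and it is exactly the essentiality of $\M A\hookrightarrow\BL A$ (via the Bruns--Lakser result already cited in the paper) that bridges this gap. The converse is a routine frame-law computation, the only care needed being to keep track of which lattice each meet and join is taken in.
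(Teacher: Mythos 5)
Your proof is correct, but it takes a genuinely different route from the paper's. The paper argues entirely inside the Priestley space: for the ``if'' direction it shows that $V\cap{\sf int_1cl}\bigcup\mathcal U$ is below every upper bound $W\in{\sf DM}(X)$ of $\{V\cap U : U\in\mathcal U\}$, using only that $X\setminus V$ is closed and that DM-sets satisfy ${\sf int_1cl}(W)=W$; for the ``only if'' direction it extracts, from a failure of $\bigvee\mathcal U\subseteq{\sf cl}\bigcup\mathcal U$, a nonempty basic open set $F\setminus G$ (with $F,G$ clopen upsets) that witnesses a failure of distributivity. You instead set $A={\sf ClopUp}(X)$, use \cref{thm: char of I DM BL} and \cref{rem: joins in DM and BL} to recast the displayed equation as ``the join of $\mathcal U$ computed in $\M A$ coincides with its join computed in $\BL A$,'' and then get the forward implication from the Bruns--Lakser fact that the essential embedding $\M A\hookrightarrow\BL A$ preserves distributive joins (the same fact the paper invokes in \cref{sub kappa frame} and \cref{thm: kappaJD}, applied to $B=\M A$), and the converse from the frame law in $\BL A$ together with the two-inequality sandwich $V\cap E=\bigvee_{\BL A}\{V\cap U\}\le\bigvee_{\M A}\{V\cap U\}\le V\cap E$. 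Both directions of your argument are sound and non-circular. Your version is conceptually cleaner---it isolates the real content of the theorem, namely that a join in $\M A$ is distributive exactly when it agrees with the corresponding join in the ambient frame $\BL A$---and it reuses machinery already set up in Section 3; the price is a dependence on the external Bruns--Lakser preservation theorem and on the (stated but unproved) join formulas of \cref{rem: joins in DM and BL}, whereas the paper's argument is self-contained at the level of the Priestley topology and produces explicit clopen witnesses, in keeping with the dual-characterization spirit of Section 5.
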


\begin{proof}
    First suppose $\bigvee\mathcal U = {\sf int_1cl}\bigcup\mathcal U$. Let $V\in{\sf DM}(X)$ be arbitrary. If $W$ is an upper bound of $\{ V\cap U : U \in\mathcal U\}$ in ${\sf DM}(X)$, then $V\cap U \subseteq W$ for each $U\in\mathcal U$, so $\bigcup\{ V\cap U : U \in \mathcal U \} \subseteq W$, and hence $V\cap \bigcup \mathcal U \subseteq W$. Therefore, $\bigcup \mathcal U \subseteq V^c \cup W$, so ${\sf cl} \bigcup \mathcal U \subseteq V^c \cup {\sf cl}(W)$ because $V^c$ is closed. But then $V\cap {\sf cl} \bigcup \mathcal U \subseteq {\sf cl} W$, so $V\cap {\sf int_1 cl} \bigcup \mathcal U \subseteq {\sf int_1cl}(W) = W$ because $V,W\in{\sf DMUp}(X)$. Thus, $V\cap\bigvee \mathcal U \subseteq W$, and hence $V\cap\bigvee \mathcal U = \bigvee \{ V\cap U : U \in\mathcal U\}$. Consequently, $\bigvee \mathcal U$ is a distributive join.

    Conversely, suppose $\bigvee \mathcal U$ is a distributive join and $\bigvee \mathcal U \not\subseteq {\sf int_1cl}\bigcup\mathcal U$. Since ${\sf ClopUp}(X)$ is a base for open upsets, there is a clopen upset $E$ such that $E \subseteq \bigvee\mathcal U$ but $E\not\subseteq{\sf int_1cl}\bigcup\mathcal U$, and so $E\not\subseteq{\sf cl}\bigcup\mathcal U$. Because $\{ F \setminus G : F,G \in {\sf ClopUp}(X) \}$ is a base for open sets, there are clopen upsets $F,G$ such that $\varnothing\ne F\setminus G\subseteq E$ but $(F\setminus G)\cap \bigcup\mathcal U = \varnothing$. Thus, $F\cap \bigcup\mathcal U\subseteq G$, and so $\bigcup\{ F\cap U : U\in\mathcal U\}\subseteq G$. Since $G\in{\sf DM}(X)$, 
    $\bigvee \{ F\cap U : U\in\mathcal U\}\subseteq G$, so $F\cap\bigvee\mathcal U\subseteq G$ because $\bigvee \mathcal U$ is a distributive join. Consequently, $(F\setminus G)\cap\bigvee\mathcal U=\varnothing$, contradicting that $E \subseteq \bigvee\mathcal U$. 
\end{proof}

The following is an immediate consequence of \cref{lem: char distr joins}:

\begin{corollary} \label{lem: distr joins in MA}
    Let $A\in\DLat$ and $X$ be the Priestley space of $A$. For $\mathcal S\subseteq\M A$, the join $\bigvee \mathcal S$ in $\M A$ is distributive if and only if $\s[\bigvee \mathcal S] = {\sf int_1cl}\bigcup \{ \s[J] : J \in\mathcal S \}$.
\end{corollary}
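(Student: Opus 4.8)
The plan is to transport \cref{lem: char distr joins} across the isomorphism $\M A \cong {\sf DM}(X)$ supplied by \cref{thm: char of I DM BL 2}. First I would recall that this isomorphism is the restriction of the ideal map $I \mapsto \s[I]$, so it carries each normal ideal $J$ of $A$ to the DM-set $\s[J]$. Being a lattice isomorphism, it preserves and reflects arbitrary existing joins as well as finite meets; since the property of a join being distributive is phrased purely in terms of joins and meets, $\bigvee \mathcal S$ is a distributive join in $\M A$ if and only if the corresponding join $\bigvee \{\s[J] : J \in \mathcal S\}$ is a distributive join in ${\sf DM}(X)$, and in that case $\s[\bigvee \mathcal S]$ equals the join $\bigvee \{ \s[J] : J \in \mathcal S\}$ computed in ${\sf DM}(X)$.

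Then I would apply \cref{lem: char distr joins} with $\mathcal U = \{\s[J] : J \in \mathcal S\} \subseteq {\sf DM}(X)$: the join $\bigvee \mathcal U$ in ${\sf DM}(X)$ is distributive precisely when $\bigvee \mathcal U = {\sf int_1cl}\bigcup \mathcal U$. Combining the two equivalences yields that $\bigvee \mathcal S$ is distributive in $\M A$ if and only if $\s[\bigvee \mathcal S] = {\sf int_1cl}\bigcup \{ \s[J] : J \in \mathcal S \}$, which is exactly the assertion.

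Since the argument reduces to invoking a single isomorphism together with a theorem already established, there is no genuine obstacle here; the only point deserving a moment's care is to note explicitly that ``being a distributive join'' is an isomorphism-invariant notion and that the image of $\bigvee_{\M A}\mathcal S$ under the Stone map is the join of the images taken in ${\sf DM}(X)$ — both immediate from the fact that the Stone map restricts to a lattice isomorphism of $\M A$ onto ${\sf DM}(X)$.
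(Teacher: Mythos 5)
Your proof is correct and matches the paper's intent exactly: the paper simply declares the corollary an immediate consequence of \cref{lem: char distr joins}, the implicit content being precisely the transport across the isomorphism $\M A \cong {\sf DM}(X)$ of \cref{thm: char of I DM BL 2} that you spell out. Nothing further is needed.
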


Next, in analogy with the notion of $\kappa$-cozeros (see, e.g., \cite{madden_kappa-frames_1991, ball_lambda-hollow_2024}), we introduce the notions of $\kappa$-clopen upsets and $\kappa$DM-sets. 

\begin{definition}
    Let $X$ be a Priestley space, $U\in{\sf OpUp}(X)$, and $\kappa$ a regular cardinal.
    \begin{enumerate}
        \item We call $U$ a {\em $\kappa$-clopen upset} if it is a $\kappa$-union of clopen upsets.
        \item We call $U$ a {\em $\kappa$DM-set} if it is a $\kappa$-union of DM-sets.
    \end{enumerate}
\end{definition}

\begin{remark}
    In view of \cref{thm: char of I DM BL}, $\kappa$-clopen upsets correspond to  $\kappa$-joins in $\I(A)$ of principal ideals, while $\kappa$DM-sets to $\kappa$-joins of normal ideals.  
\end{remark}

\begin{theorem}\label{Distr of DM}
Let $A\in{\DLat}$ and $X$ be the Priestley space of $A$.
    \begin{enumerate}[label=\upshape(\arabic*), ref = \thetheorem(\arabic*)]
        \item $\M A \in \kJD$ if and only if for each $\kappa$DM-set $U$ of $X$, ${\sf int_1\,cl_2}(U)={\sf int_1\,cl}(U)$.\label[theorem]{Distr of DM 1}
        \item $\M A \in \JID$ if and only if for each open upset $U$ of $X$, ${\sf int_1\,cl_2}(U)={\sf int_1\,cl}(U)$. \label[theorem]{Distr of DM 2}
    \end{enumerate}
 \end{theorem}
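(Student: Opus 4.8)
The plan is to move the question entirely onto the Priestley space $X$ via the isomorphism $\M A\cong{\sf DM}(X)$, $J\mapsto\s[J]$, of \cref{thm: char of I DM BL 2}, and then read both statements off \cref{lem: distr joins in MA} together with the formula for joins in ${\sf DM}(X)$ recorded in \cref{rem: joins in DM and BL}.

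First I would set up the dictionary. For $\mathcal S\subseteq\M A$ put $U_{\mathcal S}:=\bigcup\{\s[J]:J\in\mathcal S\}$, an open upset of $X$. Since $\M A$ is complete, every $\kappa$-join in $\M A$ exists, so $\M A\in\kJD$ means exactly that $\bigvee\mathcal S$ is distributive for every $\kappa$-set $\mathcal S\subseteq\M A$. The isomorphism $J\mapsto\s[J]$ preserves arbitrary joins, so $\s[\bigvee\mathcal S]=\bigvee\{\s[J]:J\in\mathcal S\}$ computed in ${\sf DM}(X)$, which by \cref{rem: joins in DM and BL} equals ${\sf int_1\,cl_2}(U_{\mathcal S})$; and by \cref{lem: distr joins in MA}, $\bigvee\mathcal S$ is distributive iff $\s[\bigvee\mathcal S]={\sf int_1\,cl}(U_{\mathcal S})$. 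Combining these, $\bigvee\mathcal S$ is distributive if and only if ${\sf int_1\,cl_2}(U_{\mathcal S})={\sf int_1\,cl}(U_{\mathcal S})$.

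For (1) it now suffices to observe that, as $\mathcal S$ ranges over the $\kappa$-sets of $\M A$, the open upset $U_{\mathcal S}$ ranges over exactly the $\kappa$DM-sets of $X$; this is immediate from the definition of a $\kappa$DM-set together with the fact that $J\mapsto\s[J]$ is a cardinality-preserving bijection of $\M A$ onto ${\sf DM}(X)$. Hence $\M A\in\kJD$ iff ${\sf int_1\,cl_2}(U)={\sf int_1\,cl}(U)$ for every $\kappa$DM-set $U$. For (2), dropping the cardinality restriction gives that $\M A\in\JID$ iff ${\sf int_1\,cl_2}(U_{\mathcal S})={\sf int_1\,cl}(U_{\mathcal S})$ for all $\mathcal S\subseteq\M A$, so one finishes by checking that $\{U_{\mathcal S}:\mathcal S\subseteq\M A\}$ is precisely the collection of open upsets of $X$. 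That each $U_{\mathcal S}$ is an open upset is clear; conversely, every clopen upset $F$ of $X$ is a DM-set, since ${\sf cl_2}(F)={\uparrow}{\sf cl}(F)=F$ and hence ${\sf int_1\,cl_2}(F)={\sf int_1}(F)=F$, and clopen upsets form a base for the topology of open upsets, so every open upset is a union of DM-sets. (Alternatively, (2) is the case $\kappa\ge|A|$ of (1).)

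The argument is essentially bookkeeping once \cref{lem: char distr joins}, equivalently \cref{lem: distr joins in MA}, is in hand; the only steps that warrant a word of justification rather than being purely formal are the use of completeness of $\M A$ to reduce ``$\kappa$JD'' to ``all $\kappa$-joins are distributive,'' and the final identification of arbitrary unions of DM-sets with the open upsets of $X$, which rests on clopen upsets being DM-sets and forming a base for the open-upset topology.
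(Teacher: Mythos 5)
Your proof is correct and follows essentially the same route as the paper's: the paper's own argument is a one-line appeal to \cref{lem: distr joins in MA} together with the join formula $\s[\bigvee\mathcal S]={\sf int_1\,cl_2}\bigcup\{\s[J]:J\in\mathcal S\}$ from \cref{rem: joins in DM and BL}. The additional bookkeeping you supply --- that $\kappa$-sets $\mathcal S\subseteq\M A$ correspond exactly to $\kappa$DM-sets of $X$, and that arbitrary unions of DM-sets are precisely the open upsets --- is exactly what the paper leaves implicit, and your justifications of these points are sound.
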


\begin{proof}
    This follows from \cref{lem: distr joins in MA} since
     $\s[\bigvee \mathcal S] = {\sf int_1\,cl_2}\bigcup\{\s[J] : J\in\mathcal S\}$ for each $\mathcal S\subseteq \M A$ (see \cref{rem: joins in DM and BL}).
    \end{proof}

This together with \cref{cor: kappaJD} yields:

\begin{corollary}\ \label{cor: proH}
    \begin{enumerate}[label=\upshape(\arabic*), ref = \thecorollary(\arabic*)]
        \item $A\in\kproH$ if and only if for each $\kappa$DM-set $U$ of $X$, ${\sf int_1\,cl_2}(U)={\sf int_1\,cl}(U)$.\label[corollary]{cor: proH 1}
        \item $A\in\proH$ if and only if for each open upset $U$ of $X$, ${\sf int_1\,cl_2}(U)={\sf int_1\,cl}(U)$. \label[corollary]{cor: proH 2}
    \end{enumerate} 
\end{corollary}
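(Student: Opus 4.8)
The plan is to obtain both statements as dictionary translations of \cref{cor: kappaJD} through \cref{Distr of DM}, since no new work beyond chaining established equivalences is needed; here $X$ denotes the Priestley space of $A\in\DLat$.

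For part~(1): by \cref{def: kproH}, $A\in\kproH$ means exactly $\BL_\kappa(\M A)=\M A$, which by \cref{cor: kappaJD} is equivalent to $\M A\in\kJD$. Now invoke \cref{Distr of DM 1}: $\M A\in\kJD$ if and only if ${\sf int_1\,cl_2}(U)={\sf int_1\,cl}(U)$ for every $\kappa$DM-set $U$ of $X$. Composing the two equivalences gives (1).

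For part~(2): first observe that $A\in\proH$ if and only if $\M A$ is a frame (\cref{thm: MA frame}), and since $\M A$ is always a complete lattice, being a frame coincides with satisfying JID, i.e.\ with $\M A\in\JID$. Then \cref{Distr of DM 2} translates this into the condition ${\sf int_1\,cl_2}(U)={\sf int_1\,cl}(U)$ for every open upset $U$ of $X$. Alternatively, (2) follows from (1): every open upset of $X$ is a $\kappa$-union of clopen upsets---hence a $\kappa$DM-set---once $\kappa>|A|$, while every $\kappa$DM-set is an open upset, so the condition in (2) is the condition in (1) ranging over all $\kappa$, matching $\proH=\bigcap_\kappa\kproH$ from \cref{ProH hierarchy}.

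There is no genuine obstacle here; the only point requiring a moment's care is the observation, used to route \cref{thm: MA frame} into \cref{Distr of DM 2} for part~(2), that completeness of $\M A$ makes ``$\M A$ is a frame'' and ``$\M A\in\JID$'' the same condition.
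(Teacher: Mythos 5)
Your proposal is correct and follows essentially the same route as the paper: the paper derives both parts by combining \cref{Distr of DM} with \cref{cor: kappaJD} (and, for part (2), the fact that $\M A$ is a frame iff $A\in\proH$ and that frame $=$ complete $+$ JID), which is exactly your chain of equivalences. The alternative derivation of (2) from (1) via $\proH=\bigcap_\kappa\kproH$ is a harmless extra.
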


The above characterization of $\proH$ utilizes that $A\in\proH$ if and only if $\M A$ is a frame (see \cref{thm: MA frame}). Below we give another characterization of $\proH$.

\begin{theorem} \label{thm: proH Priestley}
   For $A\in{\DLat}$ and $X$ the Priestley space of $A$, 
    the following are equivalent:
        \begin{enumerate}[label=\normalfont(\arabic*), ref = \thetheorem(\arabic*)]
            \item $A\in\proH$.
            \item For $U,V$ clopen upsets, 
        $X\setminus{\downarrow}(U\setminus V) \in {\sf DM}(X)$. 
            \item For $U$ clopen,  $X\setminus{\downarrow}U \in {\sf DM}(X)$. 
            \label[theorem]{thm: proH Priestley3}
        \end{enumerate}
   \end{theorem}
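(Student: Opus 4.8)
The plan is to transport the condition $A\in\proH$, i.e.\ $\R A\subseteq\M A$, across Priestley duality. The crucial computation is to identify, for $a,b\in A$, the open upset corresponding to the relative annihilator $\langle a,b\rangle$ under $\I A\cong{\sf OpUp}(X)$ (\cref{thm: char of I DM BL 1}). I claim that
\[
\s[\langle a,b\rangle]=X\setminus{\downarrow}(\s(a)\setminus\s(b))\qquad\text{for all }a,b\in A.
\]
To prove this I would argue pointwise. For a prime filter $y$, membership $y\in{\downarrow}(\s(a)\setminus\s(b))$ means there is a prime filter $z\supseteq y$ with $a\in z$ and $b\notin z$, while $y\in\s[\langle a,b\rangle]$ means some $x\in y$ satisfies $a\wedge x\le b$. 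One implication is easy: if $x\in y$ with $a\wedge x\le b$ and $z\supseteq y$ contains $a$, then $a\wedge x\in z$, hence $b\in z$. For the converse, if no $x\in y$ satisfies $a\wedge x\le b$, then $b$ does not lie in the filter generated by $y\cup\{a\}$, and the prime filter theorem yields a prime filter $z\supseteq y$ with $a\in z$, $b\notin z$; this single step is the only nontrivial point. (An alternative is to observe that $\langle a,b\rangle={\downarrow}a\to{\downarrow}b$ in the frame $\I A$ and to transport the Heyting implication to ${\sf OpUp}(X)$ using ${\sf int_1}(S)=X\setminus{\downarrow}(X\setminus{\sf int}\,S)$ and that $(X\setminus\s(a))\cup\s(b)$ is clopen; this packages the same content.) I would also note that $\s(a)\setminus\s(b)$ is clopen, hence compact, so its down-set is closed and $X\setminus{\downarrow}(\s(a)\setminus\s(b))$ is an honest open upset.

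From the claim, (1)$\Leftrightarrow$(2) is immediate. Since $\s$ maps $A$ onto ${\sf ClopUp}(X)$, the claim gives $\s[\R A]=\{\,X\setminus{\downarrow}(U\setminus V):U,V\in{\sf ClopUp}(X)\,\}$. Moreover, in a distributive lattice every relative annihilator is an ideal (distributivity gives $a\wedge(x\vee y)=(a\wedge x)\vee(a\wedge y)$), so $\R A$ and $\M A$ are both sub-posets of $\I A$, and the order-isomorphism $I\mapsto\s[I]$ from $\I A$ onto ${\sf OpUp}(X)$ restricts to an isomorphism $\M A\cong{\sf DM}(X)$ (\cref{thm: char of I DM BL 2}). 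Hence $\R A\subseteq\M A$ iff $\s[\R A]\subseteq{\sf DM}(X)$, which is exactly condition (2).

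For (2)$\Leftrightarrow$(3): the direction (3)$\Rightarrow$(2) is trivial because $U\setminus V$ is clopen whenever $U,V$ are clopen upsets. For (2)$\Rightarrow$(3), given a clopen $U$, I would use that $\{F\setminus G:F,G\in{\sf ClopUp}(X)\}$ is a base for the topology together with compactness of $U$ to write $U=\bigcup_{i=1}^{n}(F_i\setminus G_i)$ with all $F_i,G_i$ clopen upsets; then ${\downarrow}U=\bigcup_{i=1}^{n}{\downarrow}(F_i\setminus G_i)$, so $X\setminus{\downarrow}U=\bigcap_{i=1}^{n}\bigl(X\setminus{\downarrow}(F_i\setminus G_i)\bigr)$. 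By (2) each term lies in ${\sf DM}(X)$, and since finite meets in ${\sf DM}(X)$ are computed as intersections (\cref{rem: joins in DM and BL}), so is $X\setminus{\downarrow}U$.

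The main obstacle is the pointwise identification of $\s[\langle a,b\rangle]$ in the first step; the rest is bookkeeping with the duality and with the standard base of clopen sets. Within that step, the real work is the converse inclusion, which uses the prime filter theorem (and the Heyting-implication route hides precisely the same content).
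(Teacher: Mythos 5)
Your proof is correct and follows essentially the same route as the paper: identify $\s[\langle a,b\rangle]$ with $X\setminus{\downarrow}(\s(a)\setminus\s(b))$, transport $\R A\subseteq\M A$ through the isomorphism $\M A\cong{\sf DM}(X)$, and reduce clopen sets to finite unions of differences of clopen upsets for (2)$\Leftrightarrow$(3). The only difference is that the paper cites an external lemma for the key identification of $\s[\langle a,b\rangle]$, whereas you prove it directly via the prime filter theorem; your argument for that step is sound.
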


\begin{proof}
    (1)$\Leftrightarrow$(2): Let $a,b \in A$.  
    We let $U=\s(a)$ and $V=\s(b)$. 
    By \cite[Lem.~3.14]{bezhanishvili_dedekind-macneille_2024},
    $\langle a,b \rangle$
    corresponds to the open upset
    $X \setminus {\downarrow} (U \setminus V)$. 
    Thus, 
    by \cref{thm: char of I DM BL 2},
    $A\in\proH$ if and only if $X \setminus {\downarrow} (U \setminus V)\in{\sf DM}(X)$. 
    
    (2)$\Rightarrow$(3): Let $U\subseteq X$ be clopen. Since the boolean algebra of clopen subsets of $X$ is generated by the clopen upsets of $X$, we have $U=\bigcup_{i=1}^n \left(U_i\setminus V_i\right)$.
     Therefore, since $\downarrow$ commutes with $\bigcup$, we have
    \[
    X\setminus{\downarrow}U = X\setminus{\Big\downarrow}\left( \bigcup_{i=1}^n \left(U_i\setminus V_i\right) \right) = \bigcap_{i=1}^n \left( X\setminus {\downarrow} \left(U_i\setminus V_i\right) \right),
    \]
    which is in ${\sf DM}(X)$ 
    since each $X\setminus {\downarrow}\left(U_i\setminus V_i\right)$ is in ${\sf DM}(X)$ 
    by (2) and ${\sf DM}(X)$ is closed under finite intersections. 
    
    (3)$\Rightarrow$(2): This is obvious.
\end{proof}

\begin{remark} \label{rem: Esakia}
    We recall that a Priestley space $X$ is an {\em Esakia space} if ${\downarrow}U$ is clopen for each clopen $U\subseteq X$. Equivalently, $X$ is an Esakia space provided $X\setminus{\downarrow}U\in{\sf ClopUp}(X)$ for each clopen $U\subseteq X$. Esakia \cite{esakia_topological_1974} proved that each Heyting algebra is represented as the Heyting algebra of clopen upsets of a unique Esakia space. The above theorem is a direct generalization of Esakia's representation to proHeyting lattices.
\end{remark}

Let $A\in\DLat$ and $X$ be the Priestley space of $A$. For $S\subseteq A$, we recall that $\bigvee S$ exists in $A$ if and only if ${\sf cl_2}\left(\bigcup \s[S]\right)$ is clopen, in which case $\s(\bigvee S)={\sf cl_2}\left(\bigcup \s[S]\right)$. Since each open upset is a union of clopen upsets, this yields the following characterization of complete distributive lattices: 

\begin{proposition} \label{prop: EOD} \cite[Sec.~8]{priestley_ordered_1972}
    Let $A\in\DLat$ and $X$ be the Priestley space of $A$. Then $A \in \CDLat$ if and only if ${\sf cl_2}(U)$ is clopen for each $U \in {\sf OpUp}(X)$. 
\end{proposition}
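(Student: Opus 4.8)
The plan is to reduce the statement about arbitrary open upsets to the statement about existing joins in $A$, using the fact recalled just before the proposition: for $S \subseteq A$, the join $\bigvee S$ exists in $A$ if and only if ${\sf cl_2}\!\left(\bigcup \s[S]\right)$ is clopen, and in that case it equals $\s(\bigvee S)$. Recall also that $A \in \CDLat$ means $A$ is a complete lattice, i.e.\ every subset of $A$ has a join (having all joins forces having all meets in a bounded lattice), which under the Stone representation says exactly that $\bigvee S$ exists in $A$ for every $S \subseteq A$.

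First I would prove the forward implication. Suppose $A \in \CDLat$ and let $U \in {\sf OpUp}(X)$. Since ${\sf ClopUp}(X)$ is a base for the open-upset topology, we may write $U = \bigcup \s[S]$ for some $S \subseteq A$ (namely $S = \{a \in A : \s(a) \subseteq U\}$, whose image under $\s$ has union $U$ precisely because $U$ is an open upset). Since $A$ is complete, $\bigvee S$ exists, and hence by the recalled fact ${\sf cl_2}\!\left(\bigcup \s[S]\right) = {\sf cl_2}(U)$ is clopen. This direction is essentially immediate.

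Conversely, suppose ${\sf cl_2}(U)$ is clopen for every $U \in {\sf OpUp}(X)$. Let $S \subseteq A$ be arbitrary; I want to show $\bigvee S$ exists in $A$. Put $U = \bigcup \s[S]$, which is an open upset of $X$ (a union of clopen upsets). By hypothesis ${\sf cl_2}(U) = {\sf cl_2}\!\left(\bigcup \s[S]\right)$ is clopen, so by the recalled criterion $\bigvee S$ exists in $A$. Since $S$ was arbitrary, $A$ has all joins, hence all meets, hence is a complete lattice; as $A \in \DLat$ by assumption, $A \in \CDLat$.

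The only subtle point — and the step I would be most careful about — is the claim that every open upset $U$ of $X$ has the form $\bigcup \s[S]$ for some $S \subseteq A$. This is exactly the statement that ${\sf ClopUp}(X)$ is a base for the open-upset topology on a Priestley space, which is standard (it follows from the Priestley separation axiom together with compactness), so no real obstacle arises; the proposition is essentially a repackaging of the known clopen-join criterion combined with the fact that completeness of $A$ is equivalent to existence of all joins. Accordingly the write-up will be short.
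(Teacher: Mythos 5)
Your proposal is correct and follows exactly the route the paper intends: the paper derives this proposition as an immediate consequence of the recalled fact that $\bigvee S$ exists if and only if ${\sf cl_2}\bigl(\bigcup \s[S]\bigr)$ is clopen, together with the observation that every open upset is a union of clopen upsets. Your write-up just makes that two-line reduction explicit in both directions, so there is nothing to add.
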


\begin{remark} \label{rem: kappa complete dually}
    The Priestley spaces satisfying the condition in the above proposition are called {\em extremally order disconnected}. The name is motivated by the fact that this condition generalizes extremal disconnectedness for Stone spaces (the condition that characterizes Stone spaces of complete Boolean algebras).
    As an immediate generalization of \cref{prop: EOD}, we obtain: 
    \[
    A \mbox{ is $\kappa$-complete if and only if ${\sf cl_2}(U)$ is clopen for each $\kappa$-clopen upset $U$ of $X$.}
    \]
\end{remark}

To characterize when an existent join in $A$ is distributive, we require the following lemma. 

\begin{lemma}\label{lem: dist joins in A and MA}
    
    For a meet-semilattice $A$,
    if $\bigvee_A S$ exists 
    then $\bigvee_A S$ is distributive if and only if $\bigvee_{\M A} S$ is distributive.
\end{lemma}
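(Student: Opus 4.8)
The plan is to show both implications directly, exploiting that the embedding $A \hookrightarrow \M A$ preserves all existing joins and finite meets, and that $A$ is join-dense in $\M A$ (in fact, $\M A$ consists of normal ideals of $A$, and every element of $\M A$ is a join of principal ideals). Write $e\colon A \to \M A$ for this embedding, so that $e(\bigvee_A S) = \bigvee_{\M A} e[S]$ whenever $\bigvee_A S$ exists. Throughout, I identify $A$ with its image under $e$ via \cref{conv}, so $S$ may be regarded as a subset of $\M A$ and $\bigvee_A S = \bigvee_{\M A} S$ as an element of $\M A$.

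For the forward direction, assume $\bigvee_A S$ exists and is distributive in $A$. I want $a' \wedge \bigvee_{\M A} S = \bigvee_{\M A}\{a' \wedge s : s \in S\}$ for every $a' \in \M A$. First I would reduce to the case $a' = e(a)$ for $a \in A$: since $a'$ is a normal ideal, $a' = \bigvee_{\M A}\{e(a) : a \in a'\}$; then using that meets distribute over the joins defining $a'$ in the frame $\BL A \supseteq \M A$ — wait, $\M A$ need not be a frame — so instead I would argue that if $S$ is admissible in $\M A$ with respect to every principal element $e(a)$, then it is admissible with respect to every $a' \in \M A$. This is the standard fact that join-density of a meet-sub-semilattice closed under the relevant joins suffices to check distributivity on the dense subset: for $W$ an upper bound of $\{a' \wedge s : s \in S\}$ in $\M A$, one shows $a' \wedge \bigvee S \le W$ by checking $e(a) \wedge \bigvee S \le W$ for each $a \in a'$ (which follows from admissibility over principals, since $e(a) \wedge s \le a' \wedge s \le W$), and then $a' = \bigvee\{e(a):a\in a'\}$ together with $a' \wedge \bigvee S = \bigvee\{e(a) \wedge \bigvee S : a \in a'\}$ — the latter equality being exactly admissibility of $\{e(a) : a \in a'\}$ with respect to the element $\bigvee S$, which again reduces to the principal case. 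So the whole forward direction comes down to: $\bigvee_A S$ distributive in $A$ $\Rightarrow$ $e(a) \wedge \bigvee_{\M A} S = \bigvee_{\M A}\{e(a) \wedge s : s \in S\}$ for $a \in A$. Here $e(a) \wedge s$ ($s \in S$) lives in $A$, and $e(a) \wedge \bigvee_{\M A}S = e(a) \wedge e(\bigvee_A S) = e(a \wedge \bigvee_A S) = e(\bigvee_A\{a \wedge s : s \in S\}) = \bigvee_{\M A}\{e(a \wedge s) : s \in S\}$, using join- and meet-preservation of $e$ and distributivity in $A$. That is exactly the required identity.

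For the converse, assume $\bigvee_A S =: t$ exists and $\bigvee_{\M A} S$ is distributive in $\M A$. Note $\bigvee_{\M A} S = e(t)$ since $e$ preserves the existing join. Fix $a \in A$; I must show $a \wedge t = \bigvee_A\{a \wedge s : s \in S\}$ in $A$. Applying distributivity in $\M A$ with $a' = e(a)$: $e(a \wedge t) = e(a) \wedge e(t) = \bigvee_{\M A}\{e(a) \wedge s : s \in S\} = \bigvee_{\M A}\{e(a \wedge s) : s \in S\}$. So $e(a \wedge t)$ is the join in $\M A$ of $\{e(a \wedge s) : s \in S\}$. Since $e$ is a full embedding reflecting order and $e(a \wedge t)$ is a principal ideal, $a \wedge t$ is an upper bound of $\{a \wedge s : s \in S\}$ in $A$, and any other upper bound $u \in A$ satisfies $e(u) \ge e(a \wedge s)$ for all $s$, hence $e(u) \ge \bigvee_{\M A}\{e(a \wedge s)\} = e(a \wedge t)$, hence $u \ge a \wedge t$. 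Thus $a \wedge t = \bigvee_A\{a \wedge s : s \in S\}$, as desired.

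The main obstacle is the reduction in the forward direction from arbitrary $a' \in \M A$ to principal $a' = e(a)$: one must be careful that $\M A$ is \emph{not} a frame in general, so one cannot freely distribute meets over the joins defining normal ideals. The key point making it work is that $e[A]$ is join-dense in $\M A$ and $\M A$ is a sub-meet-semilattice of $\BL A$, so that meets with a \emph{fixed} element and the relevant joins interact well enough to push the computation down to principals — essentially the same join-density argument used in the proof of \cref{lem: RA}. I would isolate this as the only nontrivial step and handle the rest as the short computations above.
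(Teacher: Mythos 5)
Your proof is correct and rests on the same two ingredients as the paper's: join-density of $A$ in $\M A$ and the fact that the embedding preserves existing joins (so that $\bigvee_{\M A} S = e(\bigvee_A S)$ is principal), with your converse direction spelling out what the paper dismisses as clear. The paper packages the forward direction more economically by contradiction --- choosing a single $a\in A$ with $a \le x \wedge \bigvee_{\M A} S$ but $a \not\le \bigvee_{\M A}\{x\wedge s : s\in S\}$ and deriving $a = \bigvee_{\M A}\{a\wedge s : s\in S\} \le \bigvee_{\M A}\{x\wedge s : s\in S\}$ --- while your reduction to principal $a'$ is sound, although the intermediate identity $a'\wedge\bigvee S=\bigvee\{e(a)\wedge\bigvee S : a\in a'\}$ is really just another routine join-density computation (using that $\bigvee_{\M A}S$ is principal) rather than an instance of the ``principal case'' of admissibility of $S$, and should be justified as such.
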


\begin{proof}
    The right-to-left implication is clear. For the other implication, suppose that $\bigvee_A S$ is distributive. If $\bigvee_{\M A} S$ is not distributive, then there is $x\in \M A$ such that $x\wedge\bigvee_{\M A} S \not\le \bigvee_{\M A} \{ x \wedge s : s\in S \}$. Since $A$ is join-dense in $\M A$, there is $a\in A$ with $a\le x\wedge\bigvee_{\M A} S$ but $a  \not\le \bigvee_{\M A} \{ x \wedge s : s\in S \}$. The former implies that $a\le x$ and $a\le \bigvee_{\M A} S = \bigvee_A S$. Since $\bigvee_A S$ is distributive, 
    \[
    a = a \wedge \bigvee_A S = \bigvee_A \{ a\wedge s : s \in S \} = \bigvee_{\M A} \{ a\wedge s : s \in S \} \le \bigvee_{\M A} \{ x\wedge s : s \in S \}.
    \]
    This contradicts the latter.
\end{proof}

\begin{theorem}\label{lem: dist join}
    Let $A\in\DLat$ and $X$ be the Priestley space of $A$. Suppose $\bigvee_A S$ exists. Then $\bigvee_A S$ is distributive if and only if 
    $\s(\bigvee_A S)={\sf cl}\bigcup\s[S]$. 
\end{theorem}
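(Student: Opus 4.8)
The plan is to move the problem into the Dedekind-MacNeille completion, where \cref{lem: distr joins in MA} already carries the analytic content, and then settle a short point-set identity on $X$.

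First I would invoke \cref{lem: dist joins in A and MA}: since $\bigvee_A S$ exists, it is a distributive join in $A$ if and only if $\bigvee_{\M A}S$ is a distributive join in $\M A$, where $S$ is regarded inside $\M A$ (via \cref{conv}) as $\{{\downarrow}a : a\in S\}$. Because the embedding $A\hookrightarrow\M A$ preserves existing joins, $\bigvee_{\M A}S = {\downarrow}(\bigvee_A S)$, so its Stone image is $\s[{\downarrow}(\bigvee_A S)] = \s(\bigvee_A S)$; and since $\s[{\downarrow}a] = \s(a)$ for every $a$, we have $\bigcup\{\s[{\downarrow}a] : a\in S\} = \bigcup\s[S]$. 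Applying \cref{lem: distr joins in MA} to $\mathcal S = \{{\downarrow}a : a\in S\}$ then gives: $\bigvee_A S$ is distributive if and only if $\s(\bigvee_A S) = {\sf int_1cl}\bigcup\s[S]$.

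It remains to replace ${\sf int_1cl}\bigcup\s[S]$ by ${\sf cl}\bigcup\s[S]$. Here I would use the description of existing joins recalled just before \cref{prop: EOD}: since $\bigvee_A S$ exists, $\s(\bigvee_A S) = {\sf cl_2}\bigcup\s[S]$, and this set is clopen. Writing $U := \bigcup\s[S]\in{\sf OpUp}(X)$, the theorem reduces to the equivalence
\[
{\sf cl_2}(U) = {\sf int_1cl}(U) \iff {\sf cl_2}(U) = {\sf cl}(U)
\]
under the hypothesis that ${\sf cl_2}(U)$ is clopen. The forward implication is immediate from the chain ${\sf int_1cl}(U) \subseteq {\sf cl}(U) \subseteq {\uparrow}{\sf cl}(U) = {\sf cl_2}(U)$ (interior is deflationary, together with the identity ${\sf cl}_2 = {\uparrow}{\sf cl}$ recorded in the excerpt): if the two ends agree, everything between them collapses, so ${\sf cl}(U) = {\sf cl_2}(U)$. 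For the converse, if ${\sf cl}(U) = {\sf cl_2}(U)$ then ${\sf cl}(U)$ is a clopen upset --- clopen by hypothesis, and an upset since it equals ${\uparrow}{\sf cl}(U)$ --- hence fixed by ${\sf int_1}$, so ${\sf int_1cl}(U) = {\sf cl}(U) = {\sf cl_2}(U)$.

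I do not anticipate a genuine obstacle: the argument is bookkeeping in the Priestley dictionary plus a two-line sandwich. The two points that warrant a little care are (i) confirming that $\bigvee_{\M A}S$ is represented by $\s(\bigvee_A S)$ and that $\bigcup\{\s[{\downarrow}a] : a\in S\} = \bigcup\s[S]$, so that \cref{lem: distr joins in MA} applies verbatim, and (ii) remembering that ${\sf int_1}$ fixes every open upset, in particular every clopen upset, which is exactly what makes the converse above go through.
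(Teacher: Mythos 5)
Your proposal is correct and follows the paper's proof essentially verbatim: reduce to $\M A$ via \cref{lem: dist joins in A and MA}, apply \cref{lem: distr joins in MA}, and then use that $\s(\bigvee_A S)={\sf cl_2}\bigcup\s[S]$ is a clopen upset to trade ${\sf int_1cl}$ for ${\sf cl}$. Your explicit sandwich argument ${\sf int_1cl}(U)\subseteq{\sf cl}(U)\subseteq{\sf cl_2}(U)$ just spells out the final equivalence that the paper states in one line.
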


\begin{proof}
    Suppose 
    $\bigvee_A S$ exists. By \cref{lem: dist joins in A and MA}, $\bigvee_A S$ is distributive if and only if $\bigvee_{\M A} S$ is distributive. Since $\bigvee_A S = \bigvee_{\M A} S$, the latter is equivalent to $\s(\bigvee_A S) = {\sf int_1cl}\bigcup \s[S]$ by \cref{lem: distr joins in MA}. Because $\s(\bigvee_A S)={\sf cl_2}\bigcup \s[S]$, we obtain that $\bigvee_A S$ is distributive if and only if ${\sf cl_2}\bigcup \s[S] = {\sf int_1cl}\bigcup \s[S]$. This, in turn, is equivalent to ${\sf cl_2}\bigcup \s[S] = {\sf cl}\bigcup \s[S]$ since ${\sf cl_2}\bigcup \s[S]$ is a clopen upset, concluding the proof.
\end{proof}

As a consequence, we obtain the following characterization of $\kJD$ and $\JID$:

\begin{theorem}\label{JID as Priestley}
Let $A\in{\DLat}$ and $X$ be the Priestley space of $A$.
    \begin{enumerate}[label=\upshape(\arabic*), ref = \thetheorem(\arabic*)]
        \item $A \in \kJD$ if and only if for each $\kappa$-open upset $U$, if ${\sf cl_2}(U)$ is clopen, then ${\sf cl_2}(U)={\sf cl}(U)$. \label[theorem]{JID as Priestley 1}
        \item \cite[Thm.~4.6]{bezhanishvili_funayamas_2013} $A \in \JID$ if and only if for each open upset $U$, if ${\sf cl_2}(U)$ is clopen then ${\sf cl_2}(U)={\sf cl}(U)$. \label[theorem]{JID as Priestley 2}
    \end{enumerate}
 \end{theorem}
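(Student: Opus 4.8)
The plan is to reduce both parts to \cref{lem: dist join} together with the classical fact recalled just before \cref{prop: EOD}: for $S\subseteq A$, the join $\bigvee_A S$ exists if and only if ${\sf cl_2}\left(\bigcup\s[S]\right)$ is clopen, and in that case $\s(\bigvee_A S)={\sf cl_2}\left(\bigcup\s[S]\right)$. The only translation needed is between subsets of $A$ and open upsets of $X$: since every clopen upset of $X$ is $\s(a)$ for a unique $a\in A$, a $\kappa$-open upset (i.e.\ a $\kappa$-union of clopen upsets) is exactly a set of the form $U=\bigcup\s[S]$ with $S\subseteq A$ a $\kappa$-set, while every open upset is $\bigcup\s[S]$ for $S=\{a\in A:\s(a)\subseteq U\}$.

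First I would prove (1). For the forward direction, assume $A\in\kJD$ and let $U$ be a $\kappa$-open upset with ${\sf cl_2}(U)$ clopen; write $U=\bigcup\s[S]$ for a $\kappa$-set $S\subseteq A$. By the recalled fact, $\bigvee_A S$ exists and $\s(\bigvee_A S)={\sf cl_2}(U)$; since $|S|<\kappa$ and $A\in\kJD$, this join is distributive, so \cref{lem: dist join} gives $\s(\bigvee_A S)={\sf cl}(U)$, whence ${\sf cl_2}(U)={\sf cl}(U)$. For the converse, let $S\subseteq A$ be any $\kappa$-set for which $\bigvee_A S$ exists, and put $U=\bigcup\s[S]$, a $\kappa$-open upset. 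Then ${\sf cl_2}(U)=\s(\bigvee_A S)$ is clopen, so the hypothesis yields ${\sf cl_2}(U)={\sf cl}(U)$, i.e.\ $\s(\bigvee_A S)={\sf cl}\bigcup\s[S]$, and \cref{lem: dist join} shows $\bigvee_A S$ is distributive. As $S$ ranged over all $\kappa$-sets with an existent join, $A\in\kJD$.

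Part (2) is the identical argument with the cardinality restriction on $S$ dropped, i.e.\ replacing ``$\kappa$-open upset'' by ``open upset'' and ``$\kappa$-set'' by ``arbitrary subset'' throughout; this recovers \cite[Thm.~4.6]{bezhanishvili_funayamas_2013}. Alternatively one could note that $\JID=\bigcap_\kappa\kJD$ and that the condition in (2) is precisely the conjunction over all $\kappa$ of the conditions in (1), but the direct proof is just as short.

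I do not expect a genuine obstacle here: all the analytic content sits in \cref{lem: dist join} (which rests on \cref{lem: dist joins in A and MA} and \cref{lem: distr joins in MA}) and in the Priestley description of existent joins in a distributive lattice. The one point to handle with care is that $\kappa$JD is a condition quantifying over $\kappa$-\emph{sets} $S\subseteq A$, not over the open upsets they generate; so both directions must be phrased in terms of such $S$ and only then transported to $X$ via the Stone map, using that distributivity of $\bigvee_A S$ depends on $S$ through $\bigcup\s[S]$ alone.
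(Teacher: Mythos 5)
Your proposal is correct and follows essentially the same route as the paper: both reduce the statement to \cref{lem: dist join} together with the recalled fact that $\bigvee_A S$ exists iff ${\sf cl_2}\bigl(\bigcup\s[S]\bigr)$ is clopen, translating between $\kappa$-sets $S\subseteq A$ and $\kappa$-open upsets via the Stone map. The paper's proof is just a terser version of yours (and likewise derives (2) from (1) by dropping the cardinality restriction), so there is nothing to add.
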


\begin{proof}
    (1) By definition, $A \in \kJD$ if and only if each existent $\kappa$-join is distributive. By \cref{lem: dist join}, this is equivalent to $\s(\bigvee S)={\sf cl}\bigcup\s[S]$ for each existent $\kappa$-join $\bigvee S$. But $\bigvee S$ exists if and only if ${\sf cl}_2 U$ is clopen for the corresponding $\kappa$-open upset $U=\bigcup\s[S]$. The result follows.

    (2) This follows from (1). 
    \end{proof}

The next theorem provides a characterization of $\kappa$-frames in the language of Priestley spaces, from which we derive the Pultr-Sichler characterization of frames.

\begin{theorem}\label{thm: kFrm}
Let $A\in{\DLat}$ and $X$ be the Priestley space of $A$.
    \begin{enumerate}[label=\upshape(\arabic*), ref = \thetheorem(\arabic*)]
        \item $A \in \kFrm$ if and only if ${\sf cl}(U)$ is a clopen upset for each $\kappa$-clopen upset $U$. \label[theorem]{thm: kFrm 1}
        \item \cite[Thm.~2.3]{pultr_frames_1988} $A \in \Frm$ if and only if ${\sf cl}(U)$ is a clopen upset for each open upset~$U$.\label[theorem]{thm: kFrm 2}
    \end{enumerate}
 \end{theorem}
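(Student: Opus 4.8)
The plan is to reduce the theorem to two dual descriptions already established: the characterization of $\kappa$-completeness in \cref{rem: kappa complete dually} and the distributivity criterion \cref{lem: dist join}. Since $A\in\kFrm$ means precisely that $A$ is $\kappa$-complete and $\kappa$JD, it is enough to show that the single condition ``${\sf cl}(U)$ is a clopen upset for every $\kappa$-clopen upset $U$'' is equivalent to the conjunction of these two.

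For the forward direction of \cref{thm: kFrm 1}, I would take a $\kappa$-clopen upset $U$ and write $U=\bigcup\s[S]$ for a $\kappa$-set $S\subseteq A$. Since $A$ is $\kappa$-complete, $\bigvee_A S$ exists, so by the recalled fact that $\bigvee_A S$ exists iff ${\sf cl_2}\bigcup\s[S]$ is clopen (and then equals $\s(\bigvee_A S)$), the set ${\sf cl_2}(U)$ is the clopen upset $\s(\bigvee_A S)$. Since $A$ is $\kappa$JD, $\bigvee_A S$ is a distributive join, so \cref{lem: dist join} yields $\s(\bigvee_A S)={\sf cl}(U)$. Combining, ${\sf cl}(U)={\sf cl_2}(U)=\s(\bigvee_A S)$ is a clopen upset.

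For the reverse direction, I would assume that ${\sf cl}(U)$ is a clopen upset for every $\kappa$-clopen upset $U$. Using the identity ${\sf cl_2}(U)={\uparrow}{\sf cl}(U)$, the hypothesis forces ${\sf cl_2}(U)={\sf cl}(U)$, which is clopen; hence $A$ is $\kappa$-complete by \cref{rem: kappa complete dually}. For $\kappa$JD, let $S\subseteq A$ be a $\kappa$-set for which $\bigvee_A S$ exists and put $U=\bigcup\s[S]$, a $\kappa$-clopen upset; then $\s(\bigvee_A S)={\sf cl_2}(U)={\sf cl}(U)={\sf cl}\bigcup\s[S]$, so $\bigvee_A S$ is distributive by \cref{lem: dist join}. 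Thus $A\in\kJD$, and therefore $A\in\kFrm$. (One could equally invoke \cref{JID as Priestley 1} here, since its hypothesis is effectively tested only on $\kappa$-clopen upsets of the form $\bigcup\s[S]$.)

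Finally, I would deduce \cref{thm: kFrm 2} from \cref{thm: kFrm 1} by picking a regular cardinal $\kappa>|A|$: then every subset of $A$ is a $\kappa$-set, so $A\in\Frm$ iff $A\in\kFrm$; and since ${\sf ClopUp}(X)$ is a base for the open upsets and $|{\sf ClopUp}(X)|=|A|<\kappa$, every open upset is a $\kappa$-union of clopen upsets, so the $\kappa$-clopen upsets are exactly the open upsets. I do not anticipate a genuine obstacle: the only point needing care is the bookkeeping between the two closure operators ${\sf cl}$ and ${\sf cl_2}$, together with the identity ${\sf cl_2}={\uparrow}\,{\sf cl}$, which is exactly what makes the equivalence ``${\sf cl}(U)$ a clopen upset'' $\Longleftrightarrow$ ``${\sf cl_2}(U)$ clopen and ${\sf cl_2}(U)={\sf cl}(U)$'' go through.
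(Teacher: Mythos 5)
Your proposal is correct and follows essentially the same route as the paper: decompose $\kFrm$ as $\kappa$-complete plus $\kappa$JD, apply the dual characterizations of each (\cref{rem: kappa complete dually} and \cref{lem: dist join}/\cref{JID as Priestley 1}), and observe via ${\sf cl_2}={\uparrow}{\sf cl}$ that the conjunction is equivalent to ${\sf cl}(U)$ being a clopen upset. The only cosmetic difference is that the paper proves (2) by running the same argument with \cref{prop: EOD} and \cref{JID as Priestley 2}, whereas you deduce (2) from (1) by taking a regular $\kappa>|A|$; both are fine.
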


\begin{proof}
(1) $A \in \kFrm$ if and only if $A$ is $\kappa$-complete and $\kappa$JD. Being $\kappa$-complete is equivalent to ${\sf cl_2}(U)$ being clopen for each $\kappa$-open upset $U$ (see \cref{rem: kappa complete dually}), and being $\kappa$JD is equivalent to ${\sf cl_2}(U)$ clopen implies ${\sf cl_2}(U)={\sf cl}(U)$ for each $\kappa$-open upset $U$ (see \cref{JID as Priestley 1}). These two conditions are equivalent to ${\sf cl}(U)$ being a clopen upset for each $\kappa$-clopen upset $U$.

(2) $A \in \Frm$ if and only if $A$ is complete and JID. Being complete is equivalent to ${\sf cl_2}(U)$ being clopen for each open upset $U$ (see \cref{prop: EOD}), and being JID is equivalent to ${\sf cl_2}(U)$ clopen implies ${\sf cl_2}(U)={\sf cl}(U)$ for each open upset $U$ (see \cref{JID as Priestley 2}). These two conditions are equivalent to ${\sf cl}(U)$ being a clopen upset for each open upset $U$. 
\end{proof}

We next turn our attention to the dual characterization of $\CkFrm$, $\kappa\CHeyt$, $\kHeyt$, and $\prokH$. 

\begin{definition}
    Let $X$ be a Priestley space.
    \begin{enumerate}
        \item We call an open upset $U$ of $X$ a {\em $\kappa${\sf BL}-set} if $U={\sf int_1cl}(V)$ for some $\kappa$-clopen upset $V$.
        \item Let ${\sf BL}_\kappa(X)$ be the poset of $\kappa{\sf BL}$-sets of $X$.
    \end{enumerate}
\end{definition}

\begin{proposition}\label{prop: BLkappa}
    Let $A\in\DLat$ and $X$ be the Priestley space of $A$. Then $\BL_\kappa A$ is isomorphic to ${\sf BL}_\kappa(X)$.
\end{proposition}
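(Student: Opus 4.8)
The plan is to show that the Priestley isomorphism $\BL A \cong {\sf BL}(X)$ of \cref{thm: char of I DM BL 3} restricts to a bijection between $\BL_\kappa A$ and ${\sf BL}_\kappa(X)$. Write $\Phi\colon \BL A \to {\sf BL}(X)$ for this isomorphism, given by $\Phi(E) = \s[E] = \bigcup\{\s(a) : a \in E\}$. Since $\s$ is order-preserving, for $a\in A$ we have $\s[{\downarrow}a] = \bigcup\{\s(b):b\le a\} = \s(a)$, so $\Phi$ carries the copy of $A$ inside $\BL A$ onto ${\sf ClopUp}(X)$, and more generally it sends $\kappa$-joins of principal D-ideals to the corresponding $\kappa{\sf BL}$-sets.

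First I would prove $\Phi(\BL_\kappa A) \subseteq {\sf BL}_\kappa(X)$. By \cref{k joins}, every $E \in \BL_\kappa A$ is a $\kappa$-join of elements of $A$; since $\BL_\kappa A$ is a sub-$\kappa$-frame of $\BL A$, this join coincides with the one computed in $\BL A$, so $E = \bigvee_{\BL A} S$ for some $S\subseteq A$ with $|S| < \kappa$. Applying $\Phi$ and using the formula for joins in ${\sf BL}(X)$ from \cref{rem: joins in DM and BL}, we get $\Phi(E) = \bigvee_{{\sf BL}(X)}\{\s(a):a\in S\} = {\sf int_1cl}\bigl(\bigcup_{a\in S}\s(a)\bigr)$. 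The set $\bigcup_{a\in S}\s(a)$ is a $\kappa$-union of clopen upsets, hence a $\kappa$-clopen upset, so $\Phi(E)$ is a $\kappa{\sf BL}$-set. Conversely, for ${\sf BL}_\kappa(X)\subseteq\Phi(\BL_\kappa A)$, take a $\kappa{\sf BL}$-set $U = {\sf int_1cl}(V)$ with $V$ a $\kappa$-clopen upset, say $V = \bigcup_{i<\lambda}F_i$ with $\lambda<\kappa$ and each $F_i\in{\sf ClopUp}(X)$; writing $F_i = \s(a_i)$ via surjectivity of the Stone map, we obtain $U = {\sf int_1cl}\bigl(\bigcup_{i<\lambda}\s(a_i)\bigr) = \bigvee_{{\sf BL}(X)}\{\s(a_i):i<\lambda\} = \Phi\bigl(\bigvee_{\BL A}\{{\downarrow}a_i:i<\lambda\}\bigr)$, and $\bigvee_{\BL A}\{{\downarrow}a_i:i<\lambda\}$ lies in $\BL_\kappa A$ because $\BL_\kappa A$ contains $A$ and is closed under $\kappa$-joins. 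Hence $U\in\Phi(\BL_\kappa A)$.

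Combining the two inclusions, $\Phi$ restricts to a bijection $\BL_\kappa A \to {\sf BL}_\kappa(X)$, and being the restriction of an order-isomorphism it is itself an order-isomorphism (in fact a $\kappa$-frame isomorphism, as meets and $\kappa$-joins are computed in $\BL A$ on both sides). There is no real obstacle here: the only point needing care is the bookkeeping of joins, namely that a $\kappa$-join in the sub-$\kappa$-frame $\BL_\kappa A$ agrees with the one in $\BL A$ and transports under $\Phi$ to ${\sf int_1cl}$ of a $\kappa$-union of clopen upsets — which is precisely what \cref{k joins} and \cref{rem: joins in DM and BL} supply.
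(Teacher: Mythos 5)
Your proposal is correct and follows essentially the same route as the paper: both reduce the statement to the observation (via \cref{k joins}) that $\BL_\kappa A$ consists of $\kappa$-joins in $\BL A$ of elements of $A$, while ${\sf BL}_\kappa(X)$ consists of $\kappa$-joins in ${\sf BL}(X)$ of clopen upsets, and then transport these along the isomorphism $\BL A\cong{\sf BL}(X)$ of \cref{thm: char of I DM BL 3}. Your version merely spells out the bookkeeping (the action of $\Phi$ on principal ideals and the ${\sf int_1cl}$ join formula) that the paper leaves implicit.
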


\begin{proof}
    By \cref{k joins}, $a\in\BL_\kappa A$ if and only if $a$ is a $\kappa$-join in $\BL A$ from $A$, and $U\in{\sf BL}_\kappa(X)$ if and only if $U$ is a $\kappa$-join in ${\sf BL}(X)$ from ${\sf ClopUp}(X)$. The result follows since $A$ is isomorphic to ${\sf ClopUp}(X)$ and $\BL A$ is isomorphic to ${\sf BL}(X)$.
\end{proof}

\begin{theorem}\label{thm: CkFrm}
Let $A\in{\DLat}$ and $X$ be the Priestley space of $A$.
    \begin{enumerate}[label=\upshape(\arabic*), ref = \thetheorem(\arabic*)]
        \item $A \in\CkFrm$ if and only if $X$ is extremally order disconnected and ${\sf cl}(U)\in{\sf ClopUp}(X)$ for each $\kappa$-clopen upset $U$.\label[theorem]{thm: CkFrm 1}
        \item $A\in\kappa \CHeyt$ if and only if $X$ is an Esakia space and ${\sf cl}(U)\in{\sf ClopUp}(X)$ 
        for each $\kappa$-clopen upset $U$.\label[theorem]{thm: CkFrm 2}
        \item $A\in\kHeyt$ if and only if  $X\setminus{\downarrow}U \in {\sf BL}_\kappa(X)$ for each clopen $U$.\label[theorem]{thm: CkFrm 3}
        \item $A\in\prokH$ if and only if  $X\setminus{\downarrow}U \in {\sf DM}(X) \cap {\sf BL}_\kappa(X)$ for each clopen $U$.\label[theorem]{thm: CkFrm 4} 
         \end{enumerate}
 \end{theorem}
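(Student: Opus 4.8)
The plan is to read off all four statements from the lattice-theoretic characterizations already established, translating them through the Priestley correspondences $\M A \cong {\sf DM}(X)$ (\cref{thm: char of I DM BL 2}) and $\BL_\kappa A \cong {\sf BL}_\kappa(X)$ (\cref{prop: BLkappa}); no new ideas are needed beyond a careful bookkeeping.

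For (1), recall that $\CkFrm = \kFrm \cap \CDLat$ (\cref{def: CkFrm}). By \cref{prop: EOD}, $A\in\CDLat$ if and only if $X$ is extremally order disconnected, and by \cref{thm: kFrm 1}, $A\in\kFrm$ if and only if ${\sf cl}(U)$ is a clopen upset for every $\kappa$-clopen upset $U$; conjoining these conditions gives (1). For (2), replace $\CDLat$ by $\Heyt$: we have $\kappa\CHeyt = \Heyt\cap\kFrm$ (\cref{prop: char of kCH}), and by Esakia duality (\cref{rem: Esakia}) $A\in\Heyt$ if and only if $X$ is an Esakia space, so conjoining with the $\kFrm$-condition of \cref{thm: kFrm 1} yields (2).

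For (3) and (4), I would start from the definitions $A\in\kHeyt \Leftrightarrow \R A\subseteq\BL_\kappa A$ and $A\in\prokH \Leftrightarrow \R A\subseteq\BL_\kappa A\cap\M A$ (\cref{def: kappa H}). As recorded in the proof of \cref{thm: proH Priestley} (via \cite[Lem.~3.14]{bezhanishvili_dedekind-macneille_2024}), under the Stone isomorphism the relative annihilator $\langle a,b\rangle$ corresponds to the open upset $X\setminus{\downarrow}(U\setminus V)$, where $U,V$ are the clopen upsets dual to $a,b$. Hence, using \cref{prop: BLkappa} and \cref{thm: char of I DM BL 2}, $A\in\kHeyt$ if and only if $X\setminus{\downarrow}(U\setminus V)\in{\sf BL}_\kappa(X)$ for all clopen upsets $U,V$, and $A\in\prokH$ if and only if $X\setminus{\downarrow}(U\setminus V)\in{\sf BL}_\kappa(X)\cap{\sf DM}(X)$ for all clopen upsets $U,V$. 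It then remains to pass between ``all clopen upsets $U,V$'' and ``all clopen $U$'', exactly as in the equivalence (2)$\Leftrightarrow$(3) of \cref{thm: proH Priestley}: one direction is immediate because $U\setminus V$ is clopen, and for the converse one writes an arbitrary clopen set as $\bigcup_{i=1}^n(U_i\setminus V_i)$ with the $U_i,V_i$ clopen upsets (these generate the Boolean algebra of clopen sets), uses $X\setminus{\downarrow}\bigl(\bigcup_{i=1}^n(U_i\setminus V_i)\bigr) = \bigcap_{i=1}^n\bigl(X\setminus{\downarrow}(U_i\setminus V_i)\bigr)$, and invokes closure of ${\sf BL}_\kappa(X)$ and ${\sf DM}(X)$ under finite intersections.

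The computations are routine; the one point that needs a word of justification is closure of ${\sf BL}_\kappa(X)$ under finite intersections, which holds because ${\sf BL}_\kappa(X)\cong\BL_\kappa A$ is a sub-$\kappa$-frame of $\BL A$ and finite meets in ${\sf BL}(X)$ are computed as finite intersections (\cref{rem: joins in DM and BL}); the same remark handles ${\sf DM}(X)$, since $\M A$ is a sub-meet-semilattice of $\BL A$. With this in hand, (3) and (4) follow, completing the proof.
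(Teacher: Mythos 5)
Your proposal is correct. Parts (1) and (2) coincide exactly with the paper's proof: both conjoin \cref{prop: EOD} (resp.\ \cref{rem: Esakia} via \cref{prop: char of kCH}) with \cref{thm: kFrm 1}. For (3) and (4) you take a mildly different, lower-level route: the paper quotes the external description of $\pH A$ from \cite[Prop.~3.15]{bezhanishvili_dedekind-macneille_2024} (as the bounded sublattice of ${\sf BL}(X)$ generated by the sets $X\setminus{\downarrow}U$ with $U$ clopen) and then applies \cref{thm: kappaH 1} and \cref{prop: BLkappa}, whereas you work directly from the definition $\R A\subseteq\BL_\kappa A$ in \cref{def: kappa H}, translate relative annihilators to sets $X\setminus{\downarrow}(U\setminus V)$ via \cite[Lem.~3.14]{bezhanishvili_dedekind-macneille_2024}, and redo the Boolean-generation and finite-intersection reduction already used in the proof of \cref{thm: proH Priestley}. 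Both arguments are sound; yours essentially reproves the content that Prop.~3.15 packages, which makes it slightly more self-contained at the cost of repeating the reduction, and your justification that ${\sf BL}_\kappa(X)$ and ${\sf DM}(X)$ are closed under finite intersections (as sub-meet-semilattices of ${\sf BL}(X)$, where finite meets are intersections) is exactly the point that needs saying.
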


\begin{proof}
    (1) Apply \cref{{prop: EOD},thm: kFrm 1}.

    (2) Apply Esakia's representation of Heyting lattices (see \cref{rem: Esakia}), \cref{prop: char of kCH}, and \cref{thm: kFrm 1}.

    (3) By \cite[Prop.~3.15]{bezhanishvili_dedekind-macneille_2024}, $\pH A$ is isomorphic to the bounded sublattice of ${\sf BL}(X)$ consisting of finite joins of elements of the form $X\setminus{\downarrow}U$ where $U$ is clopen. Therefore, it is enough to apply \cref{thm: kappaH 1,prop: BLkappa}. 
    
    (4) Since $\prokH = \kHeyt \cap \proH$, it is enough to apply (3) and \cref{thm: proH Priestley}.
    \end{proof}

The table summarizes Priestley type characterizations of the main classes in \cref{fig:Summary of hierarchies} that have no cardinality restrictions (we write ${\sf Clop}(X)$ for the base of clopen sets of a Priestley space):

\vspace{2mm}

\begin{center}
\adjustbox{scale=.9}{
    
\begin{tabular}{|l|ll|}
\hline
$A\in\Frm$ & $U\in{\sf OpUp}(X) \Rightarrow {\sf cl}(U)\in{\sf ClopUp}(X)$&(\cref{thm: kFrm 2})  \\
\hline
$A\in\CDLat$ & $U\in{\sf OpUp}(X) \Rightarrow {\sf cl_2}(U)\in{\sf ClopUp}(X)$ &(\cref{prop: EOD}) \\
\hline

\hline
$A\in\Heyt$ & 

$U\in{\sf Clop}(X) \Rightarrow X\setminus{\downarrow}U\in{\sf ClopUp}(X)$&(\cref{rem: Esakia})  \\
  
\hline
$A \in \proH$ & $U\in{\sf Clop}(X) \Rightarrow X\setminus {\downarrow}U \in{\sf DM}(X)$&(\cref{thm: proH Priestley})  \\
\hline
$A \in \JID$ & $U,{\sf cl_2}(U)\in{\sf OpUp}(X) \Rightarrow {\sf cl_2}(U)={\sf cl}(U)$ &(\cref{JID as Priestley 2}) \\
\hline

\end{tabular}
    }
\end{center}

\vspace{2mm}

How the above characterizations split into ones involving cardinality restrictions is summarized below. Note that the first line splits into three cases, the fourth line into two cases, and the second line is not considered since our primary interest is the interaction of various degrees of distributivity and completeness. Nevertheless, as we saw in \cref{rem: kappa complete dually}, the classes $\kappa\CDLat$ have obvious Priestley type characterizations. 

\vspace{2mm}

\begin{center}
\adjustbox{scale=.9}{

\begin{tabular}{|l|ll|}
\hline
$A\in\kFrm$ & $U$ a $\kappa$-clopen upset $\Rightarrow {\sf cl}(U)\in{\sf ClopUp}(X)$ &(\cref{thm: kFrm 1}) \\
\hline
$A\in\CkFrm$ & $A\in\kFrm$ and $U\in{\sf OpUp}(X) \Rightarrow {\sf cl_2}(U)\in{\sf ClopUp}(X)$ &(\cref{thm: CkFrm 1}) \\ 
\hline
$A\in \kappa \CHeyt$ & $A\in\kFrm$ and $X$ is Esakia &(\cref{thm: CkFrm 2})\\
\hline
\hline
$A\in\kHeyt$ & 
$U\in{\sf Clop}(X) \Rightarrow X\setminus{\downarrow}U\in{\sf BL}_\kappa(X)$&(\cref{thm: CkFrm 3})  \\
\hline
$A\in\prokH$ & $U\in{\sf Clop}(X) \Rightarrow X\setminus{\downarrow}U\in{\sf BL}_\kappa(X)\cap{\sf DM}(X)$&(\cref{thm: CkFrm 4})  \\
\hline
\hline
$A \in \kproH$ & $U$ a $\kappa\sf DM$-set  
$\Rightarrow {\sf int_1\,cl_2}(U)={\sf int_1\,cl}(U)$&(\cref{cor: proH 1}) \\
\hline
$A \in \kJD$ & $U$ a $\kappa$-clopen upset and ${\sf cl_2}(U) \in{\sf ClopUp}(X) \Rightarrow {\sf cl_2}(U)={\sf cl}(U)$&(\cref{JID as Priestley 1}) \\
\hline
\end{tabular}
    }
\end{center}

\vspace{2mm}

We conclude the paper with two examples. The first one is the well-known example of Funayama \cite{funayama_completion_1944}, 
which also shows that $\M$ and $\pH$ may commute on a distributive lattice $A$ without $A$ being proH. This fulfills the promise made in \cref{rem: DM vs pH}. The second one provides a $\kappa$ generalization of the Funayama example and shows that $\kFrm$ is not contained in $\kproH$, as promised in \cref{kFrm different kproH}. 

\begin{example}\label{example: Funayama}
Let $A=\omega\oplus\omega^{\mathrm{op}}$, and consider the product $A_1 \times A_2 \times A_3$, where $A_1$ and $A_3$ are both copies of $A$, and $A_2$ is the two-element chain $\{0,1\}$. Following \cite{funayama_completion_1944}, we label $A_1$ as 
\[
b_1 < b_2 < \cdots \cdots < a_2 < a_1
\]
and $A_3$ as 
\[
d_1 < d_2 < \cdots \cdots < c_2 < c_1.
\]
It is well known (and easy to see) that the Priestley space of 
$A_1$ is the chain $X_1$:  
\[
x_1 < x_2 < \cdots \infty_1 < \cdots < y_2 < y_1,
\]
where $x_n = {\uparrow} a_n$ and $y_n = A_1 \setminus {\downarrow} b_n$ for each $n\ge 1$, and $\infty_1=\bigcup_n {\uparrow} a_n$. Therefore,
$\infty_1$ is the only limit point of $X_1$ and $\lim x_n = \infty_1 = \lim y_n$. 

The Priestley space $X_2$ of $A_2$ is the singleton $\{w\}$, where $w = \{1\}$, and the Priestley space $X_3$ of $A_3$ is 
\[
u_1 < u_2 < \cdots \infty_2 \cdots < v_2 < v_1,
\]
where $u_n = {\uparrow} c_n$ and $v_n = A_3 \setminus {\downarrow} d_n$ for each $n\ge 1$, and $\infty_2 = \bigcup_n {\uparrow} c_n$.

The Priestley space of $A_1\times A_2\times A_3$ is (order-homeomorphic to) the disjoint union $Y=X_1 \cup X_2 \cup X_3$; see \cref{fig:Funayama1}. 

The Funayama lattice is the following sublattice $L$ of $A_1\times A_2\times A_3$: 
\begin{eqnarray*}
    L &:=& \{ (a_i,1,c_k) : i,k\ge 1 \}  \cup  \\
&& \{ (b_j,1,c_k) : j,k\ge 1 \} \cup \\
&& \{ (b_j,0,c_k) : j,k\ge 1 \} \cup \\
&& \{ (b_j,0,d_l) : j,l\ge 1 \}. 
\end{eqnarray*}

We show that its Priestley space is order-homeomorphic to the space $X$ depicted in \cref{fig:Funayama2}, which is obtained by extending the order $\le$ of $Y$ so that it is the least partial order containing $\le$ and $\{(\infty_1,w),(w,\infty_2)\}$.\footnote{This was also observed by Rodrigo Almeida (personal communication).} 

  \begin{center}
\begin{figure}[!ht] 
   \adjustbox{scale=.85}{
   
 \begin{subfigure}[b]{0.45\textwidth}
   \begin{tikzpicture}[scale=.8]
   
\node (w) at (0,0) {$\bullet$};\node at (0,-.5){$\scriptstyle{w}$};
\node (inf1) at (-2,-1) {\color{red}{$\bullet$}};\node at (-2.5,-1) {\color{red}{$\scriptstyle{\infty_1}$}};
\node (inf2) at (2,1) {\color{red}{$\bullet$}};\node at (2.5,1) {\color{red}{$\scriptstyle{\infty_2}$}};

\node  (x1) at (-2,-5) {$\bullet$};\node  at (-2.5,-5) {$\scriptstyle{x_1}$};
  \node  (x2) at (-2,-4) {$\bullet$};\node   at (-2.5,-4) {$\scriptstyle{x_2}$};
  \node  (x3) at (-2,-3) {$\bullet$};\node  at (-2.5,-3) {$\scriptstyle{x_3}$};

  \node  (y1) at (-2,3) {$\bullet$};\node  at (-2.5,3) {$\scriptstyle{y_1}$};
  \node  (y2) at (-2,2) {$\bullet$};\node   at (-2.5,2) {$\scriptstyle{y_2}$};
  \node  (y3) at (-2,1) {$\bullet$};\node  at (-2.5,1) {$\scriptstyle{y_3}$};

  \node  (u1) at (2,-3) {$\bullet$};\node  at (2.5,-3) {$\scriptstyle{u_1}$};
  \node  (u2) at (2,-2) {$\bullet$};\node   at (2.5,-2) {$\scriptstyle{u_2}$};
  \node  (u3) at (2,-1) {$\bullet$};\node  at (2.5,-1) {$\scriptstyle{u_3}$};

   \node  (v1) at (2,5) {$\bullet$};\node  at (2.5,5) {$\scriptstyle{v_1}$};
  \node  (v2) at (2,4) {$\bullet$};\node   at (2.5,4) {$\scriptstyle{v_2}$};
  \node  (v3) at (2,3) {$\bullet$};\node  at (2.5,3) {$\scriptstyle{v_3}$};

  \draw (x1) --(x2) -- (x3); 
  \draw[dashed](x3) --(inf1);
   \draw (y1) --(y2) -- (y3); 
  \draw[dashed](y3) --(inf1);
  \draw (u1) --(u2) -- (u3); 
  \draw[dashed](u3) --(inf2);
  \draw (v1) --(v2) -- (v3); 
  \draw[dashed](v3) --(inf2);
 \end{tikzpicture} 
\caption{Priestley space of $A_1\times A_2\times A_3$}
         \label{fig:Funayama1}
    \end{subfigure}
     \begin{subfigure}[b]{0.45\textwidth}
   \begin{tikzpicture}[scale=.8]
   \node (w) at (0,0) {$\bullet$};\node at (0,-.5){$\scriptstyle{w}$};
\node (inf1) at (-2,-1) {\color{red}{$\bullet$}};\node at (-2.5,-1) {\color{red}{$\scriptstyle{\infty_1}$}};
\node (inf2) at (2,1) {\color{red}{$\bullet$}};\node at (2.5,1) {\color{red}{$\scriptstyle{\infty_2}$}};

\node  (x1) at (-2,-5) {$\bullet$};\node  at (-2.5,-5) {$\scriptstyle{x_1}$};
  \node  (x2) at (-2,-4) {$\bullet$};\node   at (-2.5,-4) {$\scriptstyle{x_2}$};
  \node  (x3) at (-2,-3) {$\bullet$};\node  at (-2.5,-3) {$\scriptstyle{x_3}$};

  \node  (y1) at (-2,3) {$\bullet$};\node  at (-2.5,3) {$\scriptstyle{y_1}$};
  \node  (y2) at (-2,2) {$\bullet$};\node   at (-2.5,2) {$\scriptstyle{y_2}$};
  \node  (y3) at (-2,1) {$\bullet$};\node  at (-2.5,1) {$\scriptstyle{y_3}$};

  \node  (u1) at (2,-3) {$\bullet$};\node  at (2.5,-3) {$\scriptstyle{u_1}$};
  \node  (u2) at (2,-2) {$\bullet$};\node   at (2.5,-2) {$\scriptstyle{u_2}$};
  \node  (u3) at (2,-1) {$\bullet$};\node  at (2.5,-1) {$\scriptstyle{u_3}$};

   \node  (v1) at (2,5) {$\bullet$};\node  at (2.5,5) {$\scriptstyle{v_1}$};
  \node  (v2) at (2,4) {$\bullet$};\node   at (2.5,4) {$\scriptstyle{v_2}$};
  \node  (v3) at (2,3) {$\bullet$};\node  at (2.5,3) {$\scriptstyle{v_3}$};

 \draw (inf1) --(w) -- (inf2);
  \draw (x1) --(x2) -- (x3); 
  \draw[dashed](x3) --(inf1);
   \draw (y1) --(y2) -- (y3); 
  \draw[dashed](y3) --(inf1);
  \draw (u1) --(u2) -- (u3); 
  \draw[dashed](u3) --(inf2);
  \draw (v1) --(v2) -- (v3); 
  \draw[dashed](v3) --(inf2);
 \end{tikzpicture} 
 
\caption{Priestley space of Funayama's lattice}
         \label{fig:Funayama2}
    \end{subfigure}
    }
   \caption{} 
        \label{fig:Funayama}
\end{figure}
\end{center}

\vspace{-5mm}

For this it is sufficient to see that $L$ is isomorphic to the lattice of clopen upsets of $X$. 
We have the following clopen upsets of $X$, where $i,j,k,l\ge 1$:
\[
\begin{array}{l}
    {\uparrow}x_i \cup {\uparrow}u_k \\
    {\uparrow}y_j \cup {\uparrow}w \cup {\uparrow}u_k, \ \ {\uparrow}w \cup {\uparrow}u_k, \\
    {\uparrow}y_j \cup {\uparrow}u_k, \ \ {\uparrow}u_k, \\
    {\uparrow}y_j \cup {\uparrow}v_l, \ \ {\uparrow}y_j, \ \ {\uparrow}v_l. \ \  
\end{array}
\]
The isomorphism $\varphi:L\to{\sf ClopUp}(X)$ is then given by 

\begin{eqnarray*}
    \varphi(a_i,1,c_k) &=& {\uparrow}x_i \cup {\uparrow}u_k \\
\varphi(b_j,1,c_k)&=&
\begin{cases}

    {\uparrow}y_j \cup {\uparrow}w \cup {\uparrow}u_k  & \text{if }j>1, \\
    {\uparrow}w \cup {\uparrow}u_k &\mbox{if }j=1 \\
\end{cases}\\
\varphi(b_j,0,c_k)&=&
\begin{cases}

    {\uparrow}y_j \cup {\uparrow}u_k  & \text{if }j>1, \\
   {\uparrow}u_k &\mbox{if }j=1 \\
\end{cases}\\
\varphi(b_j,0,d_l)&=&
\begin{cases}

    {\uparrow}y_j \cup {\uparrow}v_l  & \text{if }j,l>1, \\
    {\uparrow}v_l  & \text{if }j=1,l>1, \\
    {\uparrow}y_j  & \text{if }j>1,l=1, \\
  \varnothing &\mbox{if }j=l=1. \\
\end{cases}
\end{eqnarray*}

As was shown in \cite{funayama_completion_1944}, $\M L$ is not distributive. Therefore, $L$ is not proHeyting by \cref{thm: MA frame}. 
We show that $\M$ and $\pH$ commute. By \cref{prop: pH A 4,conv: BL A=BL B}, $\M(\pH L)=\BL L$. Thus, it is sufficient to show that $\pH(\M L)=\BL L$. 
By \cref{thm: Priestley,thm: char of I DM BL}, we identify $L$ with ${\sf ClopUp}(X)$, $\M L$ with ${\sf DM}(X)$, and $\BL L$ with ${\sf BL}(X)$. Clearly ${\sf Clop}(X)\subseteq {\sf DM}(X) \subseteq {\sf BL}(X)$. 
Let 
\[
V=\bigcup{\uparrow}v_n \mbox{ and } Y=\bigcup{\uparrow}y_n. 
\]
Then the open upsets of $X$ that are not clopen can be described as
\[
V, \ Y, \ V\cup Y, \ V\cup{\uparrow}y_n, Y\cup{\uparrow}v_n, Y\cup{\uparrow}u_n, \mbox{ and } Y\cup\{w\}\cup{\uparrow}u_n \quad (n\ge 1).
\]
Of these, $V,V\cup Y,V\cup{\uparrow}y_n$, and $Y\cup\{w\}\cup{\uparrow}u_n$ are in ${\sf DM}(X)$. Since $Y = X \setminus {\downarrow}v_1$, we see that 
$Y,Y\cup{\uparrow}v_n$, and ${Y\cup{\uparrow}u_n}$ are in $\pH({\sf ClopUp}(X)) \subseteq \pH({\sf DM}(X))$. Thus, $\pH({\sf DM}(X))={\sf BL}(X)$, and hence $\pH(\M L)=\BL L$. 
\end{example}

We point out that the Funayama lattice is JID since every existent join in it is finite. Thus, there exist distributive lattices $A$ such that $A\in\JID$, but $\M A$ is not distributive. As we saw in \cref{DMA distr}, for $\M A$ to be distributive we require that $A\in\omega_0\proH$.

\begin{example}\label{example: kappa Funayama}
Let $\kappa$ be a regular cardinal.
We generalize the Funayama example by replacing the descending countable chains $\{ y_n : n\ge 1 \}$ and $\{ v_n : n \ge 1 \}$ with descending $\kappa$-chains. More precisely, let $X_\kappa$ be obtained from $X$ by replacing $\{ y_n : n\ge 1 \}$ with $\{ y_\gamma : \gamma<\kappa \}$ and $\{ v_n : n\ge 1 \}$ with $\{ v_\gamma : \gamma<\kappa \}$. We view $\{ y_\gamma : \gamma<\kappa \}$ as $\kappa^{\mathrm{op}}$ with the topology that comes from the interval topology on $\kappa$, and view $\{ v_\gamma : \gamma<\kappa \}$ similarly. Then $\infty_1$ is the limit of the chain $\{ y_\gamma : \gamma<\kappa \}$ and $\infty_2$ is the limit of the chain $\{ v_\gamma : \gamma<\kappa \}$, so the topology of $X_\kappa$ is a Stone topology and the Priestley separation axiom is satisfied. Thus, $X_\kappa$ is a Priestley space. 

Let $L_\kappa = {\sf ClopUp}(X_\kappa)$. By \cref{thm: kFrm 1}, $L_\kappa \in \kFrm$. On the other hand, for the same reason the DM-completion of the Funayama lattice is not distributive, neither is $\M(L_\kappa)$. But then $\BL_{\omega_0}(\M L_\kappa) \not\subseteq \M L_\kappa$ by \cref{DMA distr}. Therefore, $\BL_\kappa(\M L_\kappa) \not\subseteq \M L_\kappa$, and hence $L_\kappa\notin\kproH$. 
\end{example}

\bibliographystyle{alpha}

\end{document}